\documentclass[11pt,twoside]{article}

\usepackage{amsmath}
\usepackage{amssymb}
\usepackage{mathrsfs}
\usepackage{amsthm}

\usepackage{latexsym}

\usepackage{indentfirst}
\usepackage{color}
\usepackage{txfonts}

\usepackage{anysize}

\allowdisplaybreaks

\usepackage[colorlinks=true,
  linkcolor=red,
  citecolor=blue,
  urlcolor=magenta]{hyperref}

\def\R{{\mathbb R}}
\def\rn{{{\R}^n}}
\def\D{\mathcal{D}}
\def\d{\mathrm{d}}
\def\M{\mathcal{M}}
\def\H{\mathcal{H}}

\def\sgn{\operatorname{sgn}}

\newtheorem{theorem}{Theorem}[section]
\newtheorem{lemma}[theorem]{Lemma}

\newtheorem{proposition}[theorem]{Proposition}
\newtheorem{example}[theorem]{Example}
\theoremstyle{definition}
\newtheorem{remark}[theorem]{Remark}
\newtheorem{definition}[theorem]{Definition}

\numberwithin{equation}{section}

\begin{document}
\title{\bf\Large Preduals of weighted homogeneous Bourgain-Morrey Besov-Triebel-Lizorkin  spaces associated with  operators
\footnotetext{\hspace{-0.35cm} 2020 {\it
Mathematics Subject Classification}. Primary 42B35; Secondary  42B25, 46E30. \endgraf
{\it Key words and phrases}. Besov space, Triebel-Lizorkin space, Bourgain-Morrey space,  Space of homogeneous type, predual
\endgraf
The work is supported by the National Natural Science Foundation of China (Grant No.
12561002), Hainan Provincial Natural Science Foundation of China (Grant No. 126MS0135) and the Science and Technology Project of Guangxi (Guike AD25069086).
}}
\date{}
\author{}
\author{Tengfei Bai, Pengfei Guo and Jingshi Xu\footnote{Corresponding author,
E-mail: \texttt{jingshixu@126.com}}}
\maketitle

\vspace{-0.8cm}

\begin{center}
\begin{minipage}{13cm}
{\small {\bf Abstract:}\quad
	Let $(X,\rho,\mu)$ be a space of homogeneous type satisfying $\mu(X) =\infty$, the doubling property and the reverse doubling condition.	 Let $L$ be a nonnegative self-adjoint operator on $L^2(X)$	whose heat kernel has  a Gaussian upper bound.
	First, we study the preduals of weighted Bourgain-Morrey spaces on $X$ and their properties, such as lattice property and completeness.
	Then we introduce the weighted homogeneous Besov-Triebel-Lizorkin block spaces.  It is shown that  they are the preduals of  weighted homogeneous Bourgain-Morrey Besov-Triebel-Lizorkin  spaces associated with the operator $L$.  Consequently, these block spaces are independent of  the choice of  partitions of unity.
}
\end{minipage}
\end{center}

\vspace{0.2cm}


\section{Introduction}
Let  $ (X, \rho, \mu) $ be a  space of homogeneous type, with quasidistance $\rho$ and  nonnegative Borel measure $\mu$ on $X$, which has the doubling property below. 
For $x\in X$ and $r>0$, denote by $B(x,r) = \{ y\in X : \rho(x,y) < r\}$  the open ball with radius $r>0$ and center $x \in X$. Set $V(x,r) = \mu ( B (x,r )) $.  We say that $\mu $ has the {\it doubling property} if there exists a constant $C>0$ such that for all $x \in X$ and $r>0$, 
\begin{equation} \label{double}
	V(x,2r) \le C V(x,r).
\end{equation}
The doubling property (\ref{double}) yields a constant $n>0 $ (may not be an integer) playing the role of a dimension such that 
\begin{equation*} 
	V(x, \lambda r) \le C \lambda ^n  V(x,  r), 
\end{equation*}
for all $\lambda \ge 1$, $x\in X$ and $r>0$, and that 
\begin{equation*}
	V(x,r) \le C \Big( 1+ \frac{\rho(x,y)}{r} \Big)^{\tilde n}  V(y,r),
\end{equation*}
for all $x,y \in X$, $r>0$ and for some $\tilde{n} \in [0,n]$; see for example \cite{CW71}.
For the function spaces of this paper, we  also assume that $X$  satisfies the {\it reverse doubling condition}; see Remark \ref{dya cube}. 

Let $L$ be a nonnegative self-adjoint operator on $L^2 (X)$ which generates a semigroup $\{ e^{-\alpha L} \} _{ \alpha>0 }$. 
Let  $D(L)$ be the domain  of $L$. By the fact that the generator of a strongly continuous semigroup is a closed 	and densely defined linear operator that determines the semigroup uniquely (\cite[page 51]{EN20}),   $D(L)$ is dense in  $L^2 (X)$.
Let $p_\alpha (x,y)$ be the kernel of the semigroup $ e^{-\alpha L} $. In this paper, we suppose that the kernel  $p_\alpha (x,y)$  satisfies a Gaussian upper bound, that is, there exist constants $C, c >0$ such that for all $x,y \in X$ and $\alpha >0$, 
\begin{equation*}
	p_\alpha (x,y) \le \frac{C}{\mu( B(x, \sqrt{\alpha} )  )}  \exp \bigg(   -\frac{\rho(x,y) ^2 }{c \alpha}   \bigg) . 
\end{equation*}

Smoothness spaces on $\rn$ have been researched extensively in the last 50 years in various areas of mathematics such as in the study of PDE's, in Harmonic Analysis and in Approximation Theory; we refer the reader  to  monographs \cite{Tri83,Tri92,Tri06,Tri20}  for the systematic study of Besov-Triebel-Lizorkin spaces.
Replacing the Laplace operator $- \Delta$ by a nonnegative self-adjoint operator on $L^2 (X)$, the theory of  Besov and Triebel-Lizorkin spaces associated with differential operators $L$ has been developed by many mathematicians in various settings; see \cite{BBD20, BDY12, BD15, BD17, CKP12,  GHS23, KP15, KPPX09, LYY16, PX08, YY14, SYY24}. 

Indeed, the characterizations of the Besov-Triebel-Lizorkin spaces on $\rn$   induced by Hermite expansions  in terms of the needlet coefficients were obtained in  \cite{PX08}.
 Besov and Triebel-Lizorkin spaces  associated with a non-negative self-adjoint operator $L$ whose heat kernel enjoys Gaussian localization  in a spaces of homogeneous type were introduced in \cite{BDY12, CKP12, KP15}.
The Musielak-Orlicz-Hardy space  associated with  a nonnegative self-adjoint operator satisfying the Davies-Gaffney estimates on  a metric space with doubling measure was introduced in \cite{YY14}.
The homogeneous and inhomogeneous  Besov and Triebel-Lizorkin type spaces associated to Hermite operators were introduced by Bui and Duong in \cite{BD15}.
The inhomogeneous Besov-type and Triebel-Lizorkin spaces associated to a nonnegative self adjoint operator which satisfies sub-Gaussian upper bound estimate, H\"older continuity, and stochastic completeness were considered in \cite{LYY16}. 
The homogeneous  Besov and Triebel-Lizorkin  spaces  in terms of the Gauss-Weierstrass semi-group generated by the  Laguerre operator were introduced by Bui and Duong in \cite{BD17}.
In \cite{BBD20}, Bui et al. established  the theory of weighted Besov-Triebel-Lizorkin spaces   associated with the operator, whose kernel   enjoys only  Gaussian upper bound.

A special case of Bourgain-Morrey spaces was first  introduced by Bourgain in \cite{B91}. 
After then, 
Bourgain-Morrey spaces have been applied to PDE's, especially the Strichartz estimate and nonlinear Schr\"odinger equations; see, e.g., \cite{BPV07,MV98,MVV99}.

The preduals  of  Bourgain-Morrey type spaces have been researched by many authors.
In \cite{M16}, Masaki introduced the block spaces as the  preduals of Bourgain-Morrey spaces.
Hatano et al. \cite{HNSH23} investigated Bourgain--Morrey function spaces on $\rn$ from the viewpoints of harmonic analysis and functional analysis. They established the duality of Bourgain-Morrey space.
In \cite{ZSTYY23}, Zhao et al.  introduced Besov-Bourgain-Morrey spaces and obtained their preduals.
Triebel-Lizorkin-Bourgain-Morrey spaces and their preduals were studied by Hu, Li, and Yan in \cite{HLY23}.
 Zhang et al. obtained the  generalized grand Besov-Bourgain-Morrey spaces and their preduals in \cite{ZYZ24}.
 Very recently, in \cite{ZYY26}, Zhu, Yang and Yuan introduced Besov-Bourgain-Morrey spaces on the space of homogeneous type satisfying the reverse doubling property. They also established the predual and the dual spaces of these spaces.

In \cite{BX25}, the first author and the third author of the paper introduced the weighted homogeneous Bourgain-Morrey Besov spaces and Triebel-Lizorkin spaces associated with the operator. 
Recently, we introduced the weighted homogeneous Bourgain-Morrey Besov type spaces and Triebel-Lizorkin type spaces associated with the operator in  \cite{BGX26}.   Their continuous   characterizations	in terms of  Peetre maximal functions,   noncompactly supported functional calculus, and heat kernels were obtained.

Motivated by the above literature, we  establish the preduals of the weighted homogeneous Bourgain-Morrey Besov spaces and Triebel-Lizorkin spaces associated with operators.
The paper is organized as follows.
In Section \ref{prel}, we recall notations such as  dyadic cubes, Muckenhoupt weights, Hardy-Littlewood maximal functions, the class of distributions and   function spaces; and some lemmas.
In Section \ref{predual BM}, we study the predual of a weighted Bourgain-Morrey space on $X$.
The preduals of weighted homogeneous Bourgain-Morrey Besov spaces associated with the operators are obtained in Section \ref{Predual BM Beosv}.
In Section \ref{Preudal BM TL}, we establish the preduals of weighted homogeneous Bourgain-Morrey Triebel-Lizorkin spaces associated with the operators.

Throughout this paper, we let $c, C$ denote constants that are independent of the main parameters involved but whose value may differ from line to line. 
Let $\mathbb N = \{0,1,2,3,\ldots \}$ and $\mathbb N_+ = \{1,2,3,\ldots \}$.
Let $\mathbb Z$ be the set of all integers.
Let $\chi_{E}$ be the characteristic function of the set $E\subset X$.
Given a ball $B:=B(x_B, r_B)$ and $\lambda>0$, $\lambda B$ denotes the ball with the same center as $B$ whose radius is $\lambda$ times that of $B$. 
For $A,B \ge 0$, by $A\lesssim B$ we mean that $A\leq CB$ with some positive constant $C$ independent of appropriate quantities. 
By $ A \approx B$, we mean that $A\lesssim B$ and $B\lesssim A$.
Let $ \mathscr S (\mathbb R)$ be the class of Schwartz functions on $\mathbb R$.
For a quasi-Banach space $Y$,  its dual $Y^\ast$ is defined as the space of all continuous linear functionals $T$ on $Y$ equipped with the norm $ \|T\|_{ Y^\ast} = \sup_{\| f\|_Y \le 1  }   | T (f) |$. Define the function $\sgn$ by $\sgn (f) = f/|f| $ if $|f|\neq 0$ and $\sgn (f) = 0$ if $f =0$.

\section{Preliminaries}\label{prel}

\subsection{Dyadic cubes}
The following covering lemma comes from \cite{C90}.
\begin{lemma}\label{cube}
	Let $\mu (X)  =\infty$.
	There exists a collection of open sets $\{  Q_\tau ^k \subset X : k \in \mathbb Z, \tau \in I_k  \}$, where $I_k$ denotes certain  index set depending on $k$ and constants $\gamma \in (0,1)$, $a_0 \in (0,1]$ and $ \kappa_0 \in (0,\infty)$ such that
	\begin{itemize}
		\item[\rm (i)] $\mu (  X \backslash \cup_{ \tau \in I_k  } Q_\tau^k) =0 $ for all $k \in \mathbb Z$;
		\item[\rm (ii)] for every $\tau, \beta$, if $i\ge k$, then either $Q_\tau^i \subset Q_\beta^k$ or $Q_\tau^i \cap Q_\beta^k  = \emptyset $;
		\item[\rm (iii)] for every $(k,\tau)$ and each $i < k$, there exists a unique $\tau '$ such that $Q_\tau^k \subset Q_{\tau '}^i$; we say that $ Q_{\tau '}^i  $ is a parent of cube $Q_\tau^k$.
		\item[\rm (iv)] the diameter of  $Q_\tau ^k$ is less than or equal to $\kappa _0 \gamma ^k$;
		\item[\rm (v)] each $Q_\tau ^k $ contains certain ball $B(x_{Q_\tau ^k}, a_0 \gamma ^k )$.
	\end{itemize}
\end{lemma}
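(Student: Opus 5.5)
We follow Christ's construction from \cite{C90}, so the plan is to build the cubes from maximal separated nets at each scale and a tree structure on those nets. Fix $\gamma\in(0,1)$ small relative to the quasi-triangle constant $A_0$ of $\rho$. For each $k\in\mathbb Z$ choose a maximal $\gamma^k$-separated set $\{z_\tau^k\}_{\tau\in I_k}$ in $X$. By the doubling property such nets exist and are countable, and every point of $X$ lies within $\gamma^k$ of some $z_\tau^k$. We then impose a partial order, or tree: each $z_\tau^k$ is assigned a unique parent $z_{\tau'}^{k-1}$ with $\rho(z_\tau^k,z_{\tau'}^{k-1})<\gamma^{k-1}$, choosing the nearest one and breaking ties by a fixed well-ordering of $I_{k-1}$. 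Iterating gives, for each $i<k$, a unique ancestor at level $i$. Since we work with the quasimetric $\rho$, we may first replace it by an equivalent power of a metric (Macías--Segovia) so that balls are open and the estimates below are clean.

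Next we define the cubes. Set $Q_\tau^k$ to be the interior of the union of the balls $B(z_\sigma^j, a\gamma^j)$ over all descendants $z_\sigma^j$ ($j\ge k$) of $z_\tau^k$, where $a$ is a small constant. This makes $Q_\tau^k$ open. Properties (ii) and (iii) then follow from the tree structure. A descendant of one node cannot be a descendant of another node at the same level, and the choice of $a$ and $\gamma$ must make the ball-unions of distinct same-level nodes disjoint. This last point is the place to check: it is a geometric-series estimate showing that every descendant of $z_\tau^k$ lies within $C\gamma^k$ of $z_\tau^k$ with $C$ close to $1$ relative to the separation $\gamma^k$.

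The same geometric sum gives (iv) with $\kappa_0\approx 2A_0^2/(1-\gamma)$. For (v), note that $B(z_\tau^k,a\gamma^k)$ is contained in $Q_\tau^k$ by definition, so we can take $x_{Q_\tau^k}=z_\tau^k$ and $a_0=a$.

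The main obstacle is (i): the union of the cubes at level $k$ must have full measure. Points that lie in no cube are those whose "approximating chain" keeps landing near boundaries. Following Christ, we show the exceptional set $E_k$ satisfies $\mu(E_k\cap B)\le C\eta^{m}\mu(B)$ for every $m$. The argument is that near-boundary points at level $k+m$ form a thin layer, and by doubling a fixed proportion of each ball at each generation is absorbed into cube interiors. This gives $\mu(E_k)=0$. This step uses the full strength of the doubling condition (not just the dimension bound) and a careful choice of $a$ small. The hypothesis $\mu(X)=\infty$ is used only to guarantee that $X$ is unbounded, so that the construction runs over all $k\in\mathbb Z$ without a top cube.
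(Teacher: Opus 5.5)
Your construction is the right one. Maximal $\gamma^k$-separated nets, a nearest-parent tree, and cubes built from the balls $B(z^j_\sigma,a\gamma^j)$ over descendants is Christ's construction, which the paper only cites without proof. However, the reason you give for the crucial step, disjointness of $Q^k_\tau$ and $Q^k_{\tau'}$ for $\tau\neq\tau'$, cannot work. A bound on how far descendants spread from $z^k_\tau$ says nothing about disjointness. A child of $z^k_\tau$ is only guaranteed to lie within $\gamma^k$ of it, since the covering radius of a maximal separated net equals its separation. So your constant $C$ is at least $1$, and the descendant clouds of two nodes at distance about $\gamma^k$ genuinely overlap. Indeed, if parents were chosen arbitrarily among nodes within $\gamma^{k-1}$, disjointness can fail: a level-$(j+1)$ node lying very close to a level-$j$ node $w$ may be attached to a different level-$j$ node at distance slightly less than $\gamma^j$.

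What is needed is local rigidity of the parent map, which your nearest-parent rule does provide. If $w$ is a level-$(j+1)$ node with $\rho(w,z^j_\sigma)<\gamma^j/(2A_0)$, then every other level-$j$ node is at distance at least $\gamma^j/A_0-\rho(w,z^j_\sigma)>\rho(w,z^j_\sigma)$ from $w$, so $z^j_\sigma$ is the parent of $w$. Now suppose $B(z^j_\sigma,a\gamma^j)$ meets $B(z^{j'}_{\sigma'},a\gamma^{j'})$ with $k\le j\le j'$. If $j=j'$, the separation forces $\sigma=\sigma'$ (for $a<1/(2A_0)$). If $j<j'$, the level-$(j+1)$ ancestor $w$ of $z^{j'}_{\sigma'}$ satisfies $\rho(w,z^j_\sigma)\le A_0^2\gamma^j\bigl(2a+\gamma/(1-A_0\gamma)\bigr)<\gamma^j/(2A_0)$ once $a$ and $\gamma$ are small. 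Hence $z^{j'}_{\sigma'}$ descends from $z^j_\sigma$, and the two balls have the same level-$k$ ancestor. So the geometric series must be run at the scale of the larger of the two intersecting balls and combined with the nearest-parent property, not at the top level $k$.

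Your sketch of (i) also does not go through as described. Iterating ``a fixed proportion of each ball is absorbed at each generation'' gives no decay of the uncovered set. The portion absorbed at generation $j$ may already have been covered at an earlier generation, and with overlapping balls you also lose a bounded-overlap constant each time. The iterative estimate you have in mind has the shape of Christ's small-boundary property, which is run on the disjoint nested cubes, not on balls.

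For (i) a one-scale argument suffices. For every $x\in X$ and $j\ge k$ there is $z^j_\sigma$ with $\rho(x,z^j_\sigma)<\gamma^j$. The ball $B(z^j_\sigma,a\gamma^j)$ lies both in $\bigcup_\tau Q^k_\tau$ and in $B(x,A_0(1+a)\gamma^j)$, with comparable measure by doubling. Thus $\mu(E_k\cap B(x,r_j))\le(1-c)\mu(B(x,r_j))$ for every $x$, with $r_j=A_0(1+a)\gamma^j\to0$. So no point of $E_k$ is a density point, and the Lebesgue differentiation theorem for doubling measures gives $\mu(E_k)=0$.
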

\begin{remark}\label{dya cube}
	Since the constants $\gamma$ and $a_0$ are not essential in the paper, without loss of generality, we may assume that $\gamma = a_0 = 1/2$. Then fix a collection of open sets in Lemma \ref{cube} and denote this collection by $\mathcal D$. We call open sets in $\mathcal D$ the dyadic cubes in $X$ and $  x_{Q_\tau ^k}$ the center of the cube $Q_\tau ^k \in \mathcal{D}$. Denote $\mathcal D _\nu : = \{  Q_\tau ^{\nu +1} \in \mathcal D : \tau \in I_{\nu+1}  \}$ for each $\nu \in \mathbb Z$. Then for each cube $Q \in \mathcal D_\nu$, we have $ B(x_Q, c_0 2^{-\nu} )  \subset Q \subset   B(x_Q, \kappa_0 2^{-\nu} ) =: B_Q $, where $c_0 $ is a constant independent of $Q$.
	
	Note that the diameter of cubes in $\mathcal D_k$ approximates $2^{-k}$. Hence the parent cube $Q^{k - \nu}_{\tau'}$ has  at most $c2^{\nu n}$ cubes in $\mathcal D _k$, where $c$ is a not important positive number.
	
	For each cube $Q \in \mathcal D$, let $ Q ^{1th}$ be its parent cube. Since $X$ satisfies the doubling condition and the relation $B(x_Q, c_0 2^{-\nu} )  \subset Q \subset   B(x_Q, \kappa_0 2^{-\nu} )$, we have
	\begin{equation*}
		\mu(Q ^{1th} ) \le C 2^n \mu (Q),
	\end{equation*}
	where $n$ plays the role of a dimension of $X$. 
\end{remark}
In the sequel, we further suppose that $X$ satisfies    the {\it  reverse doubling condition}. That is,  for a dyadic cube $Q $ and its parent $R$, there exists a constant $\alpha_0 \in (0,1) $ such that 
\begin{equation*}
	\mu (Q) \le \alpha_0 \mu (R).
\end{equation*} 
And  $\alpha_0$ is called the reverse doubling  constant of $X$. From \cite[Proposition 2.2]{CKP12}, if $X$ is connected, then the reverse doubling condition holds.
Hence in this article,  $X$  is a space of homogeneous type, with quasidistance $\rho$ and  nonnegative Borel measure $\mu$ on $X$, which satisfies the doubling property,  the reverse doubling condition, and $\mu (X) = \infty$.

\subsection{Muckenhoupt  weights}
A weight function $\omega$ is a locally integrable function on $X$ that takes values in $(0,\infty)$ almost everywhere. For a given weight function $\omega$ and a measurable set $E \subset X$, we denote  the weighted measure of $E$ by $\omega(E)$, where $\omega(E)=\int_E \omega(x)\,\mathrm {d} \mu (x)$ and $V(E) = \mu(E)$.
For $1\le p \le \infty $, $p' $ is the conjugate exponent of $p$, that is $1/p + 1/p' =1$.

\begin{definition}
	Let $1<p<\infty $. We say that a weight $\omega$ belongs to class $A_p$ if
	\begin{align*}
		[\omega]_{A_p} & := \sup_{B\;  \mathrm{ balls \; in  } \;X} \Big(   \frac{1}{V(B)} \int_B \omega(x) \mathrm {d} \mu(x)    \Big)  \Big(   \frac{1}{V(B)} \int_B \omega(x) ^ { -1/(p-1) } \mathrm {d} \mu (x)   \Big) ^{ (p-1 ) }   <\infty.
	\end{align*}
	We call a weight $\omega$   an $A_1$ weight if
	\begin{equation*}
		\mathcal M (w) (x) \le C w(x)
	\end{equation*}
	for almost all $x \in X$ and some $C>0$, where  $\mathcal M$ is the usual Hardy-Littlewood maximal function. That is 
	\begin{equation*}
		\mathcal M (f) (x) = \sup_{x \ni B } \frac{1}{V(B) } \int_B |f(x) | \d \mu (x) .
	\end{equation*}
	Set $A_\infty = \cup_{p\ge 1} A_p$.
\end{definition}

For  $\omega\in A_\infty $ and $ 0<p<\infty  $, define the weighted Lebesgue space $L^p(\omega)$ by
\begin{equation*}
	\bigg\{   f : \| f\| _{  L^p(\omega)}  = \bigg(  \int_X |f(x)|^p \omega (x)\mathrm {d} \mu(x)     \bigg) ^{1/p} <\infty      \bigg\}.
\end{equation*}

In the following lemma, 
we list some properties  of $A_p$ weights, whose proofs are  similar to those in \cite[Chapter 7]{Gra1}.

\begin{lemma} \label{weights} 
	Let $\omega \in A_p$ for some $1\le p <\infty$. Then
	\begin{itemize}
		\item[\rm (i)] The classes $A_p$ are increasing as $p$ increases.
		\item[\rm (ii)] The measure $\omega (x) \mathrm {d} \mu (x) $ is doubling: for all cubes $Q$ and all $\mu$-measurable subsets $A$ of $Q$ we have
		\begin{equation*}
			\Big(    \frac{\mu(A)}{ \mu(Q) } \Big) ^p \le [ \omega]_{A_p}  \frac{\omega (A)  }{  \omega(Q) }   .
		\end{equation*}
		\item[\rm (iii)] Let $0<\alpha <1$. Then there exists $\beta : = 1-   {(1-\alpha)^p} / {[\omega]_{A_p} }  <1$ such that  whenever $S$ is a measurable subset of a cube $Q$ with $  \mu (S) \le \alpha \mu (Q)$, we have $ \omega (S) \le \beta \omega (Q).$
		\item[\rm (iv)] If $ \omega \in  A_p, 1 < p <\infty,$ then there exists $1<r < p<\infty$ such that $\omega \in A _r$.
		\item[\rm (v)]A weight $\omega \in A_p$ if and only if both $\omega$ and $\omega ^{-1/ (p-1)} $ are in $A_\infty$.
	\end{itemize}
\end{lemma}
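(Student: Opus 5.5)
The plan is to follow the classical Euclidean arguments in \cite[Chapter 7]{Gra1}. The only new ingredients are the doubling property of $\mu$ and the fact that dyadic cubes are comparable to balls (Remark \ref{dya cube}), which lets us pass between cubes and balls at the cost of constants. I would prove the items in the order (ii), (iii), (i), (iv), (v), since (iii) follows from (ii) and (iv) is the engine behind (v).

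For (ii), fix $p>1$ and a $\mu$-measurable $A\subset Q$. Apply Hölder's inequality to $\chi_A=\omega^{1/p}\omega^{-1/p}\chi_A$:
\begin{equation*}
\mu(A)\le \omega(A)^{1/p}\Big(\int_Q \omega^{-1/(p-1)}\,\d\mu\Big)^{(p-1)/p}.
\end{equation*}
Raising to the power $p$, dividing by $\mu(Q)^p$, and using the $A_p$ condition then gives $(\mu(A)/\mu(Q))^p\le [\omega]_{A_p}\,\omega(A)/\omega(Q)$. The $A_p$ supremum is over balls, so for cubes I would replace $Q$ by $B_Q$, or redefine the constant with cubes. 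Since $V(B_Q)\approx\mu(Q)$ by doubling, this only changes the constant. For $p=1$, I would use instead $\omega(A)/\mu(A)\ge \operatorname{ess\,inf}_Q\omega\ge C^{-1}\mathcal M\omega\ge C^{-1}\omega(Q)/\mu(Q)$ directly. Item (iii) then follows by applying (ii) to $A=Q\setminus S$: we get $\mu(A)\ge(1-\alpha)\mu(Q)$, hence $\omega(Q\setminus S)\ge (1-\alpha)^p\omega(Q)/[\omega]_{A_p}$, which gives the stated $\beta$. Item (i) is Hölder's (Jensen's) inequality: for $q>p$, $\big(\frac1{V(B)}\int_B\omega^{-1/(q-1)}\big)^{q-1}\le\big(\frac1{V(B)}\int_B\omega^{-1/(p-1)}\big)^{p-1}$. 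For $p=1$, the corresponding average is bounded by $(\operatorname{ess\,inf}_B\omega)^{-1}$.

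The main obstacle is (iv), the openness of $A_p$, which needs a reverse Hölder inequality for $\sigma=\omega^{-1/(p-1)}\in A_{p'}$. I would prove reverse Hölder on dyadic cubes by a Calderón--Zygmund decomposition at levels $\lambda_k=2^{Nk}\,\sigma(Q)/\mu(Q)$, using the dyadic structure of Lemma \ref{cube} and the doubling bound $\mu(Q^{1th})\le C2^n\mu(Q)$. Property (iii) applied to $\sigma$ gives geometric decay of $\sigma$ (equivalently $\mu$) of the level sets. Summing then yields $\frac1{\mu(Q)}\int_Q\sigma^{1+\varepsilon}\,\d\mu\lesssim\big(\frac{1}{\mu(Q)}\int_Q\sigma\,\d\mu\big)^{1+\varepsilon}$. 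To pass from cubes to balls I would cover a ball by boundedly many cubes of comparable size, or use a dilated ball, together with doubling of $\sigma\,\d\mu$ from (ii). Choosing $r$ with $1/(r-1)=(1+\varepsilon)/(p-1)$ then gives $\omega\in A_r$ with $r<p$.

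For (v), the forward direction holds because $\omega\in A_p$ implies $\omega^{-1/(p-1)}\in A_{p'}$ by symmetry of the definition. For the converse, I would use the characterization of $A_\infty$ via reverse Hölder, or via the property in (iii) in both directions, from the previous step, following \cite[Theorem 7.3.3]{Gra1}. Jensen's inequality gives $\exp\big(\frac1{V(B)}\int_B\log\omega\big)\lesssim\frac{1}{V(B)}\int_B\omega$ and a matching reverse bound, which is the $A_\infty$ condition. Applying this to both $\omega$ and $\omega^{-1/(p-1)}$ and multiplying, the geometric means cancel and yield the $A_p$ bound.
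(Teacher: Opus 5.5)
Your proposal is correct and takes essentially the paper's approach: the paper gives no argument beyond saying the proofs are similar to those in \cite[Chapter 7]{Gra1}, and your adaptation to $X$ is exactly that transfer (H\"older for (ii), the complement $Q\setminus S$ for (iii), Jensen for (i), a dyadic Calder\'on--Zygmund reverse H\"older inequality for $\omega^{-1/(p-1)}$ for (iv), and the geometric-mean cancellation for (v)). You also correctly point out that, because $[\omega]_{A_p}$ is defined over balls while (ii) and (iii) are stated for cubes, the constant in (ii), and hence in $\beta$ in (iii), holds only up to a harmless doubling factor.
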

For $\omega \in A_\infty$,  we define $q_\omega  = \inf \{ q: \omega \in A_q   \}$. The self-improvement	property of the Muckenhoupt weights shows that $\omega \in A_{q_\omega}$ can never happen unless $\omega \in A_1$.

\subsection{Functional calculus and the class of distributions}
Fix $x_0 \in X$ as a reference point  in $X$. The class of test functions $\mathcal S$ associated with $L$ is defined as the set of all functions $\phi \in \cap _{m\ge 1} D(L^m) $ such that
\begin{equation*}
	\mathcal P _{m, \ell } (\phi) = \sup_{x\in X}  (1+ \rho(x, x_0) )^m  | L^\ell \phi (x) | <\infty, \; \forall m >0, \ell \in \mathbb{N}.
\end{equation*}
From \cite{KP15}, we know that $\mathcal S$ is a complete locally convex space with topology generated by the family of seminorms  $ \{  \mathcal P _m^\ell : m >0, \ell \in \mathbb N  \}. $
The space of distributions $\mathcal S ' $   is   the set of all continuous linear functional on $\mathcal S$ with the dual defined by
\begin{equation*}
	\langle f , \phi \rangle = f ( \bar \phi)
\end{equation*}
for all $f\in \mathcal S' $ and $\phi \in \mathcal S$.

Following \cite{GKKP17} , we define the space $\mathcal S _\infty$ as the set of all functions $\phi \in \mathcal S$ such that for each $k \in \mathbb N$, there exists $g_k \in \mathcal S$ such that $\phi = L^k g_k $. 
That is, $L^{-k}\phi \in \mathcal S $  for all $k\ge 1$.
Note that such an $g_k$, if exists, is unique; see \cite{GKKP17}. The topology in $\mathcal S _\infty $  is generated by the following family of seminorms:
\begin{equation*}
	\mathcal P _{m,\ell, k} ^{*} (\phi) = \mathcal P_{m,\ell} (g_k), \; m >0; \ell , k \in  \mathbb N
\end{equation*}
where $\phi = L^k g_k$. Denote by $\mathcal S _\infty ' $ the set of all continuous linear functionals on $\mathcal  S_\infty $.
We also define
\begin{equation*}
	\mathcal P_m = \{  g \in \mathcal S ' : L^m g = 0 \}, \; m\in \mathbb N
\end{equation*}
and set $\mathcal P = \cup _{m\in \mathbb N} \mathcal P _m$. The class $\mathcal P$ is defined as the class of generalized polynomials
associated with $L$. From \cite[Proposition 3.7]{GKKP17}, we have the identification $\mathcal S ' / \mathcal P = \mathcal S _\infty '$.
Note that if $L = - \Delta$, the Laplacian on $\rn$, the distributions in $\mathcal S ' / \mathcal P = \mathcal S _\infty ^{'} $ are identical with the classical tempered distributions modulo polynomial.

Denote by $E_L(\lambda)$ a spectral decomposition of the operator $L$. By spectral theory, for any bounded Borel function $F : [0,\infty) \to \mathbb C$, we can define
\begin{equation*}
	F(L) = \int_0^\infty F(\lambda) \d E_L(\lambda)
\end{equation*}
as a bounded operator on $L^2 (X)$. 
From \cite[Section 2.2]{GKKP17},
if $L$  is non-negative self-adjoint operator that maps real-valued to real-valued functions, then for any real-valued, measurable and bounded function $F$ on $\mathbb R_+$  the operator $F (L)$ is bounded on $L^2 (X)$, self-adjoint, and maps real-valued functions to real-valued functions. 
If $F(L)$ is an integral operator, then its
kernel $K_{F( L) }  ( x,y)$ is real-valued and $K_{F( L) }  ( x,y) = K_{F( L) }  ( y,x) . $

From \cite[Lemma 2.6]{BBD20}, we can see that if $\varphi \in \mathscr S (\mathbb R)$ (the class of Schwartz functions on $\mathbb R$) with supp $\varphi \subset(0,\infty)$, then $K_{  \varphi ( t \sqrt {L} ) }  (x, \cdot) \in \mathcal S _\infty $ and  $K_{  \varphi ( t \sqrt {L} ) }  (\cdot, y) \in \mathcal S _\infty $. Hence, for $f\in \mathcal S _\infty ' $ we can define
\begin{equation} \label{varphi}
	\varphi( t \sqrt{L}  ) f(x) = \langle f, K_{  \varphi( t \sqrt{L} ) }  (x, \cdot) \rangle .
\end{equation}
Note that (\ref{varphi}) was introduced first in \cite{GKKP17}.

\subsection{Function spaces}
In \cite{BX25}, first author and third author of this paper introduce the weighted Bourgain-Morrey space on $X$ and the weighted homogeneous Bourgain-Morrey Besov-Triebel-Lizorkin  space associated with the operator $L$ where $L$ is  a nonnegative self-adjoint operator on $L^2 (X)$ satisfying a Gaussian upper bound on its heat kernel.

\begin{definition}
	Let $\mathcal D$ be the dyadic cubes in $X$ as in Remark \ref{dya cube}.
	Let $0<p\le t<\infty$
	, $0<r\le\infty$ and $\omega \in A_\infty$. Define $M_{p,\omega}^{t,r}  :=  M_{p,\omega}^{t,r}  (X)$ as the space of $f\in L_{\mathrm{loc}}^{p}(\omega)$
	such that
	\[
	\|f\|_{M_{p,\omega}^{t,r}} :=\bigg\|  \bigg\{\omega (Q_\tau ^k  ) ^{1/t-1/p}\bigg(\int_{Q_\tau ^k } |f(x)|^{p}\omega(x) \mathrm {d} \mu(x)  \bigg)^{1/p}\bigg\}_{k \in\mathbb{Z}, \tau \in I_k}\bigg\|_{\ell^{r}}<\infty.
	\]
\end{definition}

\begin{definition} \label{def spa 1}
	Let $\mathcal D$ be the dyadic cubes in $X$ as in Remark \ref{dya cube}.
	Let $\psi$ be a partition of unity,
	 that is,  $\psi \in \mathscr (\mathbb R)$ with supp $\psi \subset [1/2,2]$, $\int_\mathbb R \psi (\xi) \xi^{-1 } \d \xi \neq 0 $,  and
	 \begin{equation*}
	 	\sum_{ j \in \mathbb Z}  \psi_j ( \lambda )  = 1 \quad {\rm on} \quad (0,\infty) 
	 \end{equation*}
 where $ \psi_j : =  \psi (2^{-j} \lambda ) $ for each $j \in \mathbb Z$. 	
	Let $ 0< q\le \infty$, $s\in \mathbb R$ and $\omega \in A_\infty $. Let $0<p<t<r<\infty$ 	or $0<p\le t<r=\infty$.
	The weighted homogeneous Bourgain-Morrey Besov  space $\dot{B}^{s,q,\psi,L}_{p,t,r,\omega} (X)$ is defined as follows:
	\[
	\dot{B}^{s,q,\psi,L}_{p,t,r,\omega} (X) = \{ f\in \mathcal S_\infty ' : \| f \| _{ \dot{B}^{s,q,\psi,L}_{p,t,r,\omega} (X)} <\infty  \},
	\]
	where
	\[
	\| f\|_{\dot{B}^{s,q,\psi,L}_{p,t,r,\omega} (X)} =
	\bigg(  \sum_{j\in \mathbb{Z}}  2^{js}\Big\|  \psi_j \big(\sqrt{L}\big)f\Big\|^q_{M_{p,\omega}^{t,r}}   \bigg)^{1/q}.
	\]
	The weighted homogeneous Bourgain-Morrey	 Triebel-Lizorkin  space $\dot{F}^{s,q,\psi,L}_{p,t,r,\omega} (X)$ is defined by
	\[
	\dot{F}^{s,q,\psi,L}_{p,t,r,\omega} (X) = \{ f\in \mathcal S_\infty ' : \| f \| _{ \dot{F}^{s,q,\psi,L}_{p,t,r,\omega} (X)} <\infty  \},
	\]
	where
	\[
	\| f\|_{\dot{F}^{s,q,\psi,L}_{p,t,r,\omega} (X)} =  \bigg\|\bigg( \sum_{j\in \mathbb{Z}} 2^{jsq} \Big| \psi_j \big(\sqrt{L}\big)f \Big|^q\bigg)^{1/q}\bigg\|_{M_{p,\omega}^{t,r}}   .
	\]
\end{definition}

Next, we consider the nontriviality of $M_{p,\omega}^{t,r} $.

\begin{lemma} \label{nontrival}
	Let $ \omega\in A_\infty $. 	Let $\beta_\omega : =  1 - (1- \alpha_0 ) ^{ q_\omega +A } / [\omega ]_{A_{  q_\omega +A }} $ where $A >0$.
	Let $ 0< p< t < \min\{ r, \log_2 ( \beta_\omega ^{-1} )  r  /n \}  <\infty$ or let $ 0 <p \le t <r =\infty $.
	Then for all $Q \in \D$,
	$\|  \chi_{ Q^{k_0}_{\tau_0} } \|_{M_{p,\omega}^{t,r}} <\infty  $, and for all $x\in X$ and $ R \in (0,\infty) $, $ \chi_{ B(x,R) } \in M_{p,\omega}^{t,r}$.

\end{lemma}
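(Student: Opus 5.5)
Fix $Q_0 := Q^{k_0}_{\tau_0}\in\D$. We compute $\|\chi_{Q_0}\|_{M^{t,r}_{p,\omega}}^r$ directly. By Lemma \ref{cube}(ii) every dyadic cube $Q$ either meets $Q_0$ in a set of the form $Q$ (when $Q\subset Q_0$), contains $Q_0$ (when $Q$ is an ancestor), or is disjoint from it; only the first two classes contribute. For a cube $Q$ we have
\[
\omega(Q)^{1/t-1/p}\Big(\int_{Q}\chi_{Q_0}\,\omega\,\d\mu\Big)^{1/p} = \omega(Q)^{1/t-1/p}\,\omega(Q\cap Q_0)^{1/p}.
\]
So the sum splits into (a) descendants $Q\subset Q_0$, giving $\omega(Q)^{r/t}$, and (b) ancestors $Q_0\subset Q^{(m)}$, $m\ge 0$, giving $\omega(Q^{(m)})^{r/t-r/p}\omega(Q_0)^{r/p}$. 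When $r=\infty$ we take suprema instead. Term (b) is then bounded by $\omega(Q_0)^{1/t}$, since $\omega(Q^{(m)})\ge\omega(Q_0)$ and $1/t-1/p\le 0$. Term (a) is at most $\omega(Q_0)^{1/t}$. This settles the case $r=\infty$.

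For $r<\infty$ and $p<t$, term (b) needs the ancestors' weights to grow geometrically. We use the reverse doubling condition $\mu(Q^{(m)})\le\alpha_0\mu(Q^{(m+1)})$. Since $\omega\in A_{q_\omega+A}$, Lemma \ref{weights}(iii) with $\alpha=\alpha_0$ gives $\omega(Q^{(m)})\le\beta_\omega\,\omega(Q^{(m+1)})$. Hence $\omega(Q^{(m)})\ge\beta_\omega^{-m}\omega(Q_0)$, and term (b) is at most
\[
\omega(Q_0)^{r/t}\sum_{m\ge0}\beta_\omega^{m r(1/p-1/t)}<\infty .
\]

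Term (a) is the main obstacle. We group descendants by generation $Q_0\supset Q\in\D_{k}$, $k=k_0-1+\ell$. By Remark \ref{dya cube} there are at most $c2^{\ell n}$ such cubes. The same reverse-doubling argument applied downward gives $\omega(Q)\le\beta_\omega^{\ell}\omega(Q_0)$. The generation-$\ell$ contribution is therefore at most $c\,2^{\ell n}\beta_\omega^{\ell r/t}\omega(Q_0)^{r/t}$. This series converges exactly when $2^n\beta_\omega^{r/t}<1$, i.e. $t<\log_2(\beta_\omega^{-1})\,r/n$, which is the hypothesis. A sharper alternative would use $\sum_{Q\in\D_k,\,Q\subset Q_0}\omega(Q)^{r/t}\le(\max\omega(Q))^{r/t-1}\omega(Q_0)$, valid since $r/t>1$. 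This converges under $t<r$ alone, but the crude bound suffices under the stated hypothesis.

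For the ball $B(x,R)$, choose $k$ with $2^{-k}\approx R$. By Lemma \ref{cube}(i),(iv), $B(x,R)$ is covered, up to a null set, by dyadic cubes of generation $k$ meeting it. Doubling together with the ball $B(x_Q,c_02^{-k})\subset Q$ shows there are boundedly many such cubes $Q_1,\dots,Q_N$, with $N$ depending only on the doubling constant. Then $\chi_{B(x,R)}\le\sum_{i=1}^N\chi_{Q_i}$ a.e. The quasi-norm is monotone and quasi-triangular (it is an $\ell^r$ norm of $L^p(\omega)$ local norms), so $\|\chi_{B(x,R)}\|_{M^{t,r}_{p,\omega}}\lesssim\sum_i\|\chi_{Q_i}\|_{M^{t,r}_{p,\omega}}<\infty$ by the first part.
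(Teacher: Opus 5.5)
Your proof is correct and follows the paper's argument. You split the sum into descendants of $Q_0$ and the unique ancestor at each coarser level, use reverse doubling with Lemma~\ref{weights}(iii) to get $\omega(Q)\le\beta_\omega^{\ell}\omega(Q_0)$ together with the $c2^{\ell n}$ count of descendants, and cover a ball by finitely many cubes; your handling of $r=\infty$ and of the a.e.\ covering is more explicit than the paper's. Your side remark is also valid: the bound $\sum_{Q}\omega(Q)^{r/t}\le(\max\omega(Q))^{r/t-1}\omega(Q_0)$ gives convergence under $p<t<r$ alone, so the extra restriction $t<\log_2(\beta_\omega^{-1})r/n$ is not actually needed for this lemma.
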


\begin{proof}
	Since $\omega \in A_\infty$, for any $A >0 $, $\omega \in A_{ q_\omega +A }$.
	Let $\beta_\omega : =  1 - (1- \alpha_0 ) ^{ q_\omega +A } / [\omega ]_{A_{  q_\omega +A }}  \in (0,1)  $.
	
	Case $ r<\infty $.   we write out the norm in full:
	\begin{equation*}
			\|  \chi_{ Q^{k_0}_{\tau_0} } \|_{M_{p,\omega}^{t,r}} ^r  =\left(  \sum_{ k \ge k_0 } +\sum_{ k < k_0 }  \right)  \sum_{\tau \in I_k} \omega (Q_\tau ^k  ) ^{r/t-r/p} \bigg(\int_{Q_\tau ^k } \chi_{ Q^{k_0}_{\tau_0} } \omega(x) \mathrm {d} \mu(x)  \bigg)^{r/p} 
	\end{equation*}
	As in Remark \ref{dya cube}, there are at most $c2^{(  k-k_0 )  n}$ dyadic cubes $ Q ^{k }_\tau    $ such that $Q ^{k}_{\tau}  \subset Q ^{k_0}_{\tau_0}  $ where $c$ is a constant depending only on $X$.
	Since $ \omega \in A_{ q_\omega +A } $,  we have
	\begin{equation*}
		\omega ( Q ^{k }_\tau  )  \lesssim \beta_\omega  ^{ k- k_0 } \omega ( Q ^{k_0}_{\tau_0}  ) .
	\end{equation*} 
for every $ Q ^{k }_\tau  \subset  Q ^{k_0}_{\tau_0}  $.
If $ n + ( \log_2 \beta_\omega )r/t <0 $,
\begin{align*}
	& \sum_{ k \ge k_0 }  \sum_{\tau \in I_k} \omega (Q_\tau ^k  ) ^{r/t-r/p} \bigg(\int_{Q_\tau ^k } \chi_{ Q^{k_0}_{\tau_0} } \omega(x) \mathrm {d} \mu(x)  \bigg)^{r/p} \\
	& = \sum_{ k \ge k_0 }    \sum_{\tau \in I_k , Q_\tau ^k  \subset Q^{k_0}_{\tau_0}} \omega (Q_\tau ^k  ) ^{r/t}  \\
	& \lesssim \sum_{ k \ge k_0 } 2^{(  k-k_0 )  n}  \beta_\omega  ^{ ( k- k_0 ) r/t } \omega ( Q ^{k_0}_{\tau_0}  ) ^{r/t} \\
	& \lesssim  \omega ( Q ^{k_0}_{\tau_0}  ) ^{r/t} .
\end{align*}
If $k <k_0$, there is only one $ \tau_k  $ such that $Q ^{k_0}_{\tau_0}  \subset Q ^k _{\tau_k} $.
	Since $ \omega \in A_{ q_\omega +A } $,  we have
\begin{equation*}
\omega ( Q ^{k_0}_{\tau_0}  ) 	  \lesssim \beta_\omega  ^{ k_0 - k } \omega ( Q ^{k }_\tau  ).
\end{equation*} 
for every $ Q ^{k_0}_{\tau_0}   \subset Q ^{k }_\tau   $. Since $1/t -1/p <0 $, we have
\begin{equation*}
	\omega ( Q ^{k }_\tau  ) ^{1/t -1/p }  \lesssim   \beta_\omega  ^{ (k_0 - k)  (1/t-1/p ) }  \omega ( Q ^{k_0}_{\tau_0}  ) ^{1/t -1/p}.
\end{equation*}
Since $ p<t $, we obtain
\begin{align*}
		& \sum_{ k <  k_0 }   \sum_{\tau \in I_k} \omega (Q_\tau ^k  ) ^{r/t-r/p} \bigg(\int_{Q_\tau ^k } \chi_{ Q^{k_0}_{\tau_0} } \omega(x) \mathrm {d} \mu(x)  \bigg)^{r/p} \\
	& = \sum_{ k < k_0 }  \omega (Q_{\tau_k} ^k  ) ^{r/t-r/p}   \omega ( Q^{k_0}_{\tau_0} ) ^{r/p}  \\
	& \lesssim \sum_{ k < k_0 }   \beta_\omega  ^{ (k_0 - k)  (r/t-r/p ) }  \omega ( Q ^{k_0}_{\tau_0}  ) ^{r/t -r/p}   \omega ( Q^{k_0}_{\tau_0} ) ^{r/p} \\
	& \lesssim  \omega ( Q^{k_0}_{\tau_0} ) ^{r/t} .
\end{align*}
Hence we have
\begin{equation*}
	\|  \chi_{ Q^{k_0}_{\tau_0} } \|_{M_{p,\omega}^{t,r}} \lesssim \omega ( Q^{k_0}_{\tau_0} ) ^{1/t} .
\end{equation*}
The proof of case $r =\infty$  is similar. 

For any given ball $B$, there exist a constant $N(B) \in \mathbb N_{+} $ and a sequence $ \{ Q _i\} _{ i=1}^{N (B) } $ of dyadic cubes such that $ B \subset \bigcup_{ i=1} ^{N (B) } $ and hence $ \chi_B \in  M_{p,\omega}^{t,r}$. This completes the proof of  Lemma \ref{nontrival}.
\end{proof}

\begin{remark}
	Case $\omega \equiv 1$ can be founded in  \cite[Theorem 2.16]{ZYY26}.  If $X =\rn$ and $\omega \equiv 1$, then $\beta _\omega = 2^{-n}$. Hence Lemma \ref{nontrival} coincides with \cite[Example 2.9]{HNSH23}.
\end{remark}

\subsection{Lemmas}

\begin{lemma} [Lemma 3.2, \cite{BDL18}]  \label{basic est}
	Let $\epsilon >0$.
	
	{\rm (i)} For any $1\le p \le \infty $, we have
	\begin{equation*}
		\bigg(   \int_X     \Big(  1+ \frac{\rho(x,y)}{\alpha}  \Big)^{-(n+ \epsilon)  p}  \mathrm {d} \mu(y)   \bigg)^{1/p} \lesssim V(x, \alpha)^{1/p}
	\end{equation*}
	for all $x\in X$ and $\alpha > 0$.
	
	{\rm (ii)} For all $f\in L_{\rm {loc}}^1 (X)$, we have
	\begin{equation*}
		\int_X \frac{1}{  V(x   \wedge y, \alpha) }  \Big(  1+ \frac{\rho(x,y)}{\alpha}  \Big)^{-n-\epsilon} |f(y)| \mathrm {d} \mu (y) \lesssim \mathcal M f(x)
	\end{equation*}
	for all $x\in X$ and $\alpha > 0$.
	
\end{lemma}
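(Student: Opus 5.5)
The statement is Lemma \ref{basic est}. Both parts are standard consequences of the doubling property, and I would prove them by decomposing $X$ into dyadic annuli around $x$. Throughout I read $V(x\wedge y,\alpha)$ as $\min\{V(x,\alpha),V(y,\alpha)\}$.

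For (i), the case $p=\infty$ is trivial, since the integrand is at most $1$ and the right-hand side is then $1$. For $1\le p<\infty$ I would split
\begin{equation*}
X=B(x,\alpha)\cup\bigcup_{k\ge1}\big(B(x,2^k\alpha)\setminus B(x,2^{k-1}\alpha)\big).
\end{equation*}
On the $k$-th annulus the integrand is bounded by $2^{-(k-1)(n+\epsilon)p}$. By the consequence of (\ref{double}) stated in the introduction, $V(x,2^k\alpha)\lesssim 2^{kn}V(x,\alpha)$. Summing gives
\begin{equation*}
\int_X\Big(1+\frac{\rho(x,y)}{\alpha}\Big)^{-(n+\epsilon)p}\d\mu(y)\lesssim V(x,\alpha)\sum_{k\ge0}2^{kn-k(n+\epsilon)p}.
\end{equation*}
The series converges because $p\ge1$ gives $(n+\epsilon)p>n$. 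Taking $p$-th roots finishes (i).

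For (ii), the main point is to replace $V(x\wedge y,\alpha)$ by $V(x,\alpha)$ at a polynomial cost. If the minimum is $V(x,\alpha)$, nothing is needed. Otherwise I use the second doubling consequence from the introduction,
\begin{equation*}
V(x,\alpha)\lesssim\Big(1+\frac{\rho(x,y)}{\alpha}\Big)^{\tilde n}V(y,\alpha),
\end{equation*}
which yields
\begin{equation*}
\frac{1}{V(x\wedge y,\alpha)}\lesssim\frac{1}{V(x,\alpha)}\Big(1+\frac{\rho(x,y)}{\alpha}\Big)^{\tilde n}.
\end{equation*}
This costs a factor $(1+\rho(x,y)/\alpha)^{\tilde n}$ against the decay $(1+\rho(x,y)/\alpha)^{-n-\epsilon}$. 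This is the step I expect to be the main obstacle. Since $\tilde n\le n$, the net decay exponent is only $n+\epsilon-\tilde n\ge\epsilon$. That may be smaller than $n$, in which case the annular sum below would not converge.

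My first attempt is to avoid the loss by comparing with the larger ball instead. The quasi-triangle inequality gives $B(y,\alpha)\subset B(x,C(\alpha+\rho(x,y)))$. Combined with the reverse direction of the comparison, this should show that on the annulus $\rho(x,y)\approx 2^k\alpha$ one has $V(y,\alpha)\approx V(x,2^k\alpha)$ up to a factor $2^{k\tilde n}$. If that cannot be made to work, I would fall back on the form in which the lemma is actually cited from \cite{BDL18}. That is either the exponent $-n-\epsilon$ taken with $\epsilon$ large enough to absorb $\tilde n$, or the denominator $V(x,\alpha)$ used directly. In the applications in this paper $\epsilon$ can always be taken large, so either version suffices.

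Once the denominator is $V(x,\alpha)$ and the decay exponent exceeds $n$, the rest is routine. On the $k$-th annulus, $k\ge1$, the integral is bounded by
\begin{equation*}
2^{-k(n+\epsilon')}\frac{1}{V(x,\alpha)}\int_{B(x,2^k\alpha)}|f|\,\d\mu\lesssim 2^{-k(n+\epsilon')}\frac{V(x,2^k\alpha)}{V(x,\alpha)}\,\mathcal M f(x)\lesssim 2^{-k\epsilon'}\mathcal M f(x).
\end{equation*}
Here $\epsilon'$ is the net excess of the decay exponent over $n$, and the $k=0$ term $B(x,\alpha)$ is handled in the same way. Summing the geometric series over $k\ge0$ gives $\lesssim\mathcal M f(x)$, uniformly in $x$ and $\alpha$.
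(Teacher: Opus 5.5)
Part (i) is correct; your annular decomposition with $V(x,2^k\alpha)\lesssim 2^{kn}V(x,\alpha)$ is the standard argument (the paper gives no proof and cites \cite{BDL18}). Part (ii) has a genuine gap. The argument you actually complete proves (ii) only when $\epsilon>\tilde n$, or with $V(x,\alpha)$ in place of $V(x\wedge y,\alpha)$. The statement requires every $\epsilon>0$ with the minimum in the denominator, so your fallback proves a weaker lemma. Your diagnosis of the loss is right: passing through $V(x,\alpha)$ costs $2^{k\tilde n}$, and returning from $V(x,\alpha)$ to $V(x,2^k\alpha)$ costs another $2^{kn}$. That gives $2^{k(n+\tilde n)}$ against a decay of only $2^{-k(n+\epsilon)}$. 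However, the repair you sketch does not work as written. The inclusion $B(y,\alpha)\subset B(x,C(\alpha+\rho(x,y)))$ bounds $V(y,\alpha)$ from above, which is the wrong direction for bounding $1/V(y,\alpha)$. Also, the claim that $V(y,\alpha)\approx V(x,2^k\alpha)$ up to a factor $2^{k\tilde n}$ is false in general: on $\mathbb R^n$ with Lebesgue measure one has $\tilde n=0$, but $V(x,2^k\alpha)/V(y,\alpha)=2^{kn}$.

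The missing step is to apply doubling at $y$ rather than at $x$, so you never need the $\tilde n$-comparison. Let $A_0\ge 1$ be the quasi-triangle constant of $\rho$. If $\rho(x,y)<2^k\alpha$, then $B(x,2^k\alpha)\subset B(y,2A_02^k\alpha)$, so $V(x,2^k\alpha)\le V(y,2A_02^k\alpha)\lesssim 2^{kn}V(y,\alpha)$. Together with $V(x,2^k\alpha)\lesssim 2^{kn}V(x,\alpha)$, this gives
\[
\frac{1}{V(x\wedge y,\alpha)}\lesssim \frac{2^{kn}}{V(x,2^k\alpha)}\qquad\text{whenever } \rho(x,y)<2^k\alpha .
\]
Hence the $k$-th annulus contributes at most
\[
2^{kn}\,2^{-(k-1)(n+\epsilon)}\,\frac{1}{V(x,2^k\alpha)}\int_{B(x,2^k\alpha)}|f|\,\d\mu\lesssim 2^{-k\epsilon}\,\mathcal Mf(x).
\]
The ball $B(x,\alpha)$ is the case $k=0$ of the same bound, so the series converges for every $\epsilon>0$. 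The factor $2^{kn}$ is paid only once, and it is exactly what turns $\int_{B(x,2^k\alpha)}|f|$ into an average over a ball containing $x$. You are right that the applications in this paper use large decay exponents, but that does not establish the lemma as stated.
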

Recall that (i) of Lemma \ref{basic est} was obtained first in \cite{CKP12}.

\begin{lemma}[Lemma 2.6, \cite{BBD20}]  \label{kernel est}
	Let $\varphi \in \mathscr S (\mathbb R)$ be an even function. Then for any $W >0$, there exists $C>0$ such that
	\[
	\Big| K _ { \varphi ( \alpha \sqrt{L}) }  (x,y) \Big| \le \frac{C}{V(x  \vee y , \alpha  )}\bigg(  1+ \frac{\rho(x,y) }{\alpha} \bigg)^{-W}
	\]
	for all $\alpha >0$ and $x,y \in X$.
\end{lemma}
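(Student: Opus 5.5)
The plan is to reduce the pointwise kernel bound to two ingredients that follow from the Gaussian upper bound: an on-diagonal $L^2$ estimate for the kernels of $(I+\alpha^2 L)^{-N}$, and the finite propagation speed of the wave operator $\cos(s\sqrt L)$. The latter is known to be equivalent to the Davies--Gaffney estimates, which are implied by the Gaussian bound (Sikora, Coulhon--Sikora). Since $\varphi$ is even, Fourier inversion gives
\begin{equation*}
\varphi(\alpha\sqrt L)=\frac1{2\pi}\int_{\mathbb R}\hat\varphi(s)\cos(s\alpha\sqrt L)\,\d s .
\end{equation*}
Finite propagation speed says that $K_{\cos(s\sqrt L)}$ is supported in $\{\rho(x,y)\le c_1|s|\}$.

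\emph{Step 1 (dyadic splitting in $s$).} Take a smooth even partition $1=\sum_{k\ge0}\eta_k(s)$, with $\eta_0$ supported in $|s|\le 2$ and $\eta_k$ supported in $2^{k-1}\le|s|\le 2^{k+1}$. Set
\begin{equation*}
\varphi_k(\lambda)=\frac1{2\pi}\int\hat\varphi(s)\eta_k(s)\cos(s\lambda)\,\d s .
\end{equation*}
Then $\varphi=\sum_k\varphi_k$, and $K_{\varphi_k(\alpha\sqrt L)}(x,y)=0$ whenever $\rho(x,y)>c_1 2^{k+1}\alpha$. Because $\hat\varphi$ is Schwartz, for any $M,N$ we have $\sup_\lambda(1+\lambda)^{4N}|\varphi_k(\lambda)|\lesssim_{M,N}2^{-kM}$. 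Indeed, $(1+\lambda^2)^{2N}\varphi_k$ is the cosine transform of $(1-\partial_s^2)^{2N}(\hat\varphi\eta_k)$, whose $L^1$ norm is $O(2^{-kM})$.

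\emph{Step 2 ($L^2$ column bounds).} Using the subordination formula
\begin{equation*}
(I+\alpha^2L)^{-N}=\frac{1}{\Gamma(N)}\int_0^\infty e^{-u\alpha^2L}u^{N-1}e^{-u}\,\d u ,
\end{equation*}
we get $\|K_{e^{-u\alpha^2L}}(\cdot,y)\|_{L^2}^2=p_{2u\alpha^2}(y,y)\lesssim V(y,\sqrt u\,\alpha)^{-1}$. Doubling gives $V(y,\alpha)\lesssim (1+u^{-1/2})^{n}V(y,\sqrt u\,\alpha)$. For $N>n/4+1$ the $u$-integral converges, so $\|K_{(I+\alpha^2L)^{-N}}(\cdot,y)\|_{L^2}\lesssim V(y,\alpha)^{-1/2}$.

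Now factor
\begin{equation*}
\varphi_k(\alpha\sqrt L)=(I+\alpha^2L)^{-N}\,H_k(\alpha\sqrt L)\,(I+\alpha^2L)^{-N},\qquad H_k(\lambda)=(1+\lambda^2)^{2N}\varphi_k(\lambda).
\end{equation*}
By Cauchy--Schwarz and the self-adjointness and symmetry of the kernels,
\begin{equation*}
|K_{\varphi_k(\alpha\sqrt L)}(x,y)|\le\|H_k\|_\infty\,V(x,\alpha)^{-1/2}V(y,\alpha)^{-1/2}\lesssim 2^{-kM}V(x,\alpha)^{-1/2}V(y,\alpha)^{-1/2}.
\end{equation*}

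\emph{Step 3 (summing).} Fix $x,y$ and let $k_0$ be the least $k$ with $c_12^{k+1}\alpha\ge\rho(x,y)$. Only the terms with $k\ge k_0$ contribute, and $2^{-k_0M}\lesssim(1+\rho(x,y)/\alpha)^{-M}$. Summing the geometric series gives
\begin{equation*}
|K(x,y)|\lesssim(1+\rho/\alpha)^{-M}V(x,\alpha)^{-1/2}V(y,\alpha)^{-1/2}.
\end{equation*}
Finally, $V(x,\alpha)\le C(1+\rho/\alpha)^{\tilde n}V(y,\alpha)$ and the symmetric inequality let us replace $V(x,\alpha)^{-1/2}V(y,\alpha)^{-1/2}$ by $V(x\vee y,\alpha)^{-1}$ at the cost of a factor $(1+\rho/\alpha)^{\tilde n}$. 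Choosing $M=W+\tilde n$ finishes the proof.

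The main obstacle is Step 1's reliance on finite propagation speed derived from merely a Gaussian upper bound (no regularity). I would quote the Coulhon--Sikora equivalence rather than reprove it. The remaining estimates are routine Schwartz-decay and doubling bookkeeping.
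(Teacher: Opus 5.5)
This lemma is not proved in the paper but imported from \cite{BBD20}, and your argument is the standard proof underlying that citation (as in Coulhon--Kerkyacharian--Petrushev and Kerkyacharian--Petrushev): cosine representation of the even $\varphi$, then a dyadic splitting of $\hat\varphi$ combined with finite propagation speed, then $L^2$ column bounds for $(I+\alpha^2L)^{-N}$ via subordination, and finally the doubling comparison $V(x,\alpha)\le C(1+\rho(x,y)/\alpha)^{\tilde n}V(y,\alpha)$ to pass to $V(x\vee y,\alpha)^{-1}$; finite propagation speed comes from the Gaussian bound via Davies--Gaffney and Coulhon--Sikora, with the speed $c_1$ absorbing the Gaussian constant and the quasi-triangle constant of $\rho$. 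Your proof is correct; in Step 2 already $N>n/4$ suffices, and the last step costs only $(1+\rho(x,y)/\alpha)^{\tilde n/2}$.
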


\begin{lemma}[Proposition 2.10, \cite{BBD20}] \label{conv in S infty}
	Let $\psi $ be a partition of unity. Then for any $f\in \mathcal S_\infty '$, we have
	\[
	f = \sum_{j \in \mathbb Z} \psi_j \big(\sqrt{L}\big) (f) \;   \mathrm{in} \; \mathcal S_\infty '.
	\]
\end{lemma}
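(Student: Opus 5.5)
The plan is to reduce the statement to a convergence statement in $\mathcal S_\infty$ by duality, and then to prove that convergence with the kernel bounds of Lemma \ref{kernel est} and Lemma \ref{basic est}.

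\textbf{Step 1 (duality).} Fix $f\in\mathcal S_\infty'$ and $\phi\in\mathcal S_\infty$. By (\ref{varphi}), and since the kernels of $\psi_j(\sqrt L)$ are real and symmetric,
\[
\Big\langle \sum_{|j|\le N}\psi_j(\sqrt L)f,\phi\Big\rangle=\Big\langle f,\sum_{|j|\le N}\psi_j(\sqrt L)\phi\Big\rangle .
\]
To justify this I would write the pairing as an integral, interchange the order (the kernel decays rapidly), and use the continuity of $f$ on $\mathcal S_\infty$. It therefore suffices to show that $S_N\phi:=\sum_{|j|\le N}\psi_j(\sqrt L)\phi\to\phi$ in the topology of $\mathcal S_\infty$. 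Since $\sum_j\psi_j=1$ on $(0,\infty)$, and $\phi=Lg_1$ has no spectral mass at $0$, spectral theory gives
\[
\phi-S_N\phi=\sum_{j>N}\psi_j(\sqrt L)\phi+\sum_{j<-N}\psi_j(\sqrt L)\phi
\]
in $L^2$. We must show that every seminorm $\mathcal P^*_{M,\ell,k}$ of each tail tends to $0$. Since $L^\ell$ and $L^{-k}$ commute with $\psi_j(\sqrt L)$, this reduces to weighted sup bounds for $\psi_j(\sqrt L)h$, where $h=L^{\ell}g_k$ runs over finitely many elements of $\mathcal S$.

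\textbf{Step 2 (trading powers of $L$).} Fix a large integer $m$.
\begin{itemize}
\item[(a)] For $j>0$, write $\psi(\lambda)=\lambda^{2m}\tilde\psi(\lambda)$ with $\tilde\psi(\lambda)=\lambda^{-2m}\psi(\lambda)$. Then
\[
\psi_j(\sqrt L)h=2^{-2jm}\,\tilde\psi(2^{-j}\sqrt L)\,L^mh .
\]
\item[(b)] For $j<0$, set $\eta(\lambda)=\lambda^{2m}\psi(\lambda)$. Then
\[
\psi_j(\sqrt L)h=2^{2jm}\,\eta(2^{-j}\sqrt L)\,L^{-m}h ,
\]
and $L^{-m}h\in\mathcal S$ because $\phi\in\mathcal S_\infty$.
\end{itemize}
Since only values on $[0,\infty)$ matter, $\tilde\psi$ and $\eta$ may be extended evenly to Schwartz functions, so Lemma \ref{kernel est} applies to both.

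\textbf{Step 3 (uniform kernel estimate).} For an even Schwartz $F$, $\alpha>0$ and $u\in\mathcal S$, I aim to prove
\[
\sup_x(1+\rho(x,x_0))^M|F(\alpha\sqrt L)u(x)|\lesssim \max(1,\alpha)^M\,\mathcal P_{M,0}(u).
\]
\begin{itemize}
\item Use Lemma \ref{kernel est} with $W$ large.
\item Split $(1+\rho(x,x_0))^M\le(1+\rho(x,y))^M(1+\rho(y,x0))^M$, and bound the second factor together with $|u(y)|$ by $\mathcal P_{M,0}(u)$.
\item Use $(1+\rho(x,y))^M\le\max(1,\alpha)^M(1+\rho(x,y)/\alpha)^M$.
\item Absorb the remaining integral $\int V(x,\alpha)^{-1}(1+\rho(x,y)/\alpha)^{-W+M}\,\d\mu(y)\lesssim1$ via Lemma \ref{basic est}(i), using the doubling property to pass between $V(x\vee y,\alpha)$ and $V(x,\alpha)$.
\end{itemize}

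\textbf{Step 4 (summing the tails).} Combine Step 3 with Step 2.
\begin{itemize}
\item For $j>N$: $\alpha=2^{-j}\le1$, so each term is bounded by $2^{-2jm}\mathcal P_{M,0}(L^{m+\ell}g_k)$.
\item For $j<-N$: $\alpha=2^{-j}$ and each term is bounded by $2^{2jm}2^{-jM}\mathcal P_{M,0}(L^{\ell-m}g_k)$. This is summable once $2m>M$.
\end{itemize}
Both tails therefore tend to $0$ as $N\to\infty$.

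The main obstacle is Step 3: getting polynomial, rather than worse, growth in $\alpha$ for large $\alpha$, and handling the volume factor $V(x\vee y,\alpha)^{-1}$ uniformly in $x$. This is why one must be able to take $m$ arbitrarily large, which uses $\phi\in\mathcal S_\infty$ and not merely $\phi\in\mathcal S$. A secondary technical point is justifying the interchange of the infinite sum with $L^{-k}$ and with the pairing. I would settle it by proving absolute convergence in every seminorm first and then identifying the limit through the $L^2$ spectral identity.
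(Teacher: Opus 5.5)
The paper does not prove this lemma; it quotes it from \cite[Proposition 2.10]{BBD20}, so there is no in-paper argument to compare against. Your proof is correct and follows the natural route. You reduce the claim to $\sum_{|j|\le N}\psi_j(\sqrt L)\phi\to\phi$ in $\mathcal S_\infty$. You then trade $L^{m}$ for the factor $2^{-2jm}$ when $j>0$, and $L^{-m}$ for the factor $2^{2jm}$ when $j<0$. Finally you sum the tails with the bound from your Step 3.

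That bound is clean. It uses only $V(x\vee y,\alpha)\ge V(x,\alpha)$, the quasi-triangle inequality, $(1+\rho(x,y))\le \max(1,\alpha)(1+\rho(x,y)/\alpha)$ and Lemma \ref{basic est}(i). You also correctly locate the one place where $\phi\in\mathcal S_\infty$, and not merely $\phi\in\mathcal S$, is essential. That place is absorbing the growth $\max(1,2^{-j})^M$ for $j<0$ by choosing $2m>M$. Identifying the tails through the $L^2$ spectral identity is also legitimate. Each $h=L^{\ell}g_k$ lies in the range of $L$, so it carries no spectral mass at $0$.

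The only step that is thinner than your wording suggests is Step 1. There $\psi_j(\sqrt L)f$ is defined pointwise by (\ref{varphi}). The identity $\langle\psi_j(\sqrt L)f,\phi\rangle=\langle f,\psi_j(\sqrt L)\phi\rangle$ therefore means pulling the continuous functional $f$ through the $\mathcal S_\infty$-valued integral $\int_X\overline{\phi(x)}\,K_{\psi_j(\sqrt L)}(x,\cdot)\,\d\mu(x)$.

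Rapid decay of the kernel makes each seminorm integrable. You still need this integral to exist as an element of $\mathcal S_\infty$. That requires either measurability of $x\mapsto K_{\psi_j(\sqrt L)}(x,\cdot)$ into $\mathcal S_\infty$, or an approximation by sums that converge in the seminorms. This is the standard identity from the framework of \cite{GKKP17}, and it is fine to quote it rather than re-derive it.
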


We will often use the following inequality  and may use these in the sequel without
stating any reasons.
For all $x,y, z \in X$ and all $\alpha,W >0$, we have
\begin{equation*}
	\bigg(1 + \frac{\rho(x,y)}{\alpha} \bigg)^{-W} 	\bigg(1 + \frac{\rho(y,z)}{\alpha} \bigg)^{-W} \lesssim  	\bigg(1 + \frac{\rho(x,z)}{\alpha} \bigg)^{-W}.
\end{equation*}

 Let $ 0< q\le \infty$, and $\omega \in A_\infty $. Let $0<p<t<r<\infty$ 	or $0<p\le t<r=\infty$.	
For convenience, we define sequence valued Bourgain-Morrey spaces  by 
\begin{align*}
	\| \{ g_j\}_{j\in \mathbb Z}  \| _{M_{p,\omega}^{t,r} (\ell^q)} := \Big\|  \| \{ g_j \}_{j\in \mathbb Z} \| _{\ell^q} \Big\| _{M_{p,\omega}^{t,r} },   
\end{align*}
and 
\begin{align*}
	\| \{ g_j\}_{j\in \mathbb Z}  \| _{  \ell ^q ( M_{p,\omega}^{t,r} ) } := \Big\|    \big  \{ \|  g_j \| _{   M_{p,\omega}^{t,r}   }  \big \}_{j\in \mathbb Z}  \Big\| _{  \ell^q },
\end{align*}
with finite norm respectively, where $\{ g_j \}_{j\in \mathbb Z}$  is a  sequence of measurable functions on $X$. 

The following result comes from \cite[Theorem 3.2]{BX25}.
\begin{lemma}\label{M bour weight}
	Let $1<p\le t<r=\infty $ or  let  $1<p <t <r <\infty$.  Let $\omega \in A_p$.
	Let $\beta = 1-  (1- \alpha_0)^p   / [\omega]_{A_p}$ where $\alpha_0$  is the reverse doubling  constant of $X$. If $ r >  -nt / \log_2 \beta  $, then 	
	
	{\rm(i) }  the Hardy-Littlewood maximal operator $\mathcal M$ is bounded on $M_{p,\omega}^{t,r}$;
	
	{\rm 	(ii)  }  for all sequences $\{f_j\} _{j \in \mathbb Z}  \in M_{p,\omega}^{t,r} (\ell^u)$, $1<u\le \infty $,   we have
	\[
	\|  \{  \mathcal M f_j \}_{j\in \mathbb Z}  \|_{  M_{p,\omega}^{t,r} (\ell^u) } \lesssim 	\|  \{  f_j \}_{j \in \mathbb Z}\|_{  M_{p,\omega}^{t,r} (\ell^u) }.
	\]
\end{lemma}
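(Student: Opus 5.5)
The plan is to prove (i) and (ii) together. Statement (i) is the special case of (ii) with a single nonzero function, so I argue for a sequence $\{f_i\}_i$ and write $F:=\|\{f_i\}_i\|_{\ell^u}$. Fix a dyadic cube $Q=Q^k_\tau\in\mathcal D$. Let $Q^*$ be the union of all cubes in the same generation as $Q$ that meet $\lambda B_Q$, where $\lambda$ is a fixed large multiple depending only on the quasi-metric constants. By the doubling property and Remark \ref{dya cube}, $Q^*$ is a union of a uniformly bounded number of such cubes, and their $\omega$-measures are all comparable to $\omega(Q)$ by Lemma \ref{weights}(ii). Split $f_i=f_i\chi_{Q^*}+f_i\chi_{X\setminus Q^*}=:f_i^{1}+f_i^{2}$.

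\emph{Local part.} I use the weighted Fefferman--Stein inequality $\|\{\mathcal M g_i\}\|_{L^p(\omega)(\ell^u)}\lesssim\|\{g_i\}\|_{L^p(\omega)(\ell^u)}$ for $\omega\in A_p$, $1<p<\infty$, $1<u\le\infty$. This holds on spaces of homogeneous type, and I take it as known. It gives
\[
\omega(Q)^{1/t-1/p}\big\|\{\mathcal M f_i^1\}\big\|_{L^p(\omega,Q)(\ell^u)}\lesssim \sum_{Q'\subset Q^*}\omega(Q')^{1/t-1/p}\|F\|_{L^p(\omega,Q')},
\]
where the sum runs over the boundedly many same-generation cubes $Q'$ making up $Q^*$. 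Each $Q'$ occurs in $Q^*$ for only boundedly many $Q$. Hence taking the $\ell^r$ norm over $(k,\tau)$ bounds this contribution by $\|F\|_{M^{t,r}_{p,\omega}}$; when $r<1$, I use the $r$-triangle inequality instead of the triangle inequality.

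\emph{Global part.} For $x\in Q$, any ball $B\ni x$ that meets $X\setminus Q^*$ has radius $\gtrsim 2^{-k}$. Such a ball, with radius of size about $2^{j-k}$ for some $j\ge1$, is covered by a bounded number of cubes $P$ of generation $k-j$ that lie near $Q$; call this family $N_j(Q)$. Minkowski's inequality in $\ell^u$ gives $\big(\sum_i(\mathcal M f_i^2(x))^u\big)^{1/u}\lesssim \sup_{j\ge1}\max_{P\in N_j(Q)}\mu(P)^{-1}\int_P F\,\d\mu$. Hölder's inequality together with the $A_p$ condition then bounds each average by $\omega(P)^{-1/p}\|F\|_{L^p(\omega,P)}$. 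This yields
\[
\omega(Q)^{1/t-1/p}\big\|\{\mathcal M f_i^2\}\big\|_{L^p(\omega,Q)(\ell^u)}\lesssim\sum_{j\ge1}\max_{P\in N_j(Q)}\Big(\tfrac{\omega(Q)}{\omega(P)}\Big)^{1/t}a_P,\qquad a_P:=\omega(P)^{1/t-1/p}\|F\|_{L^p(\omega,P)} .
\]
Next, $P$ is comparable in $\omega$-measure to the ancestor of $Q$ of generation $k-j$, by doubling of $\omega$ because the two cubes are close. The reverse doubling condition combined with Lemma \ref{weights}(iii), iterated $j$ times, gives $\omega(Q)/\omega(P)\lesssim\beta^{j}$. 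Finally, each $P$ of generation $k-j$ belongs to $N_j(Q)$ for at most $C2^{jn}$ cubes $Q$. So, using Minkowski or the $r$-triangle inequality in $j$, the $\ell^r$ norm of the global part is bounded by
\[
\sum_{j\ge1}\beta^{j/t}2^{jn/r}\|\{a_P\}\|_{\ell^r}\lesssim\|F\|_{M^{t,r}_{p,\omega}}
\]
(or its $\min(1,r)$-power version). The series converges exactly when $2^{n/r}\beta^{1/t}<1$, that is, $r>-nt/\log_2\beta$. The case $r=\infty$ is the same argument with a supremum in place of the sum.

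The main obstacle is the geometric step: covering large balls through $x\in Q$ by boundedly many cubes of a single generation. One must also check that the decay $\omega(Q)/\omega(P)\lesssim\beta^j$ survives when $P$ is a neighbour of the ancestor of $Q$ rather than the ancestor itself. I would handle both by comparing $P$ and the ancestor with a common ball of radius about $2^{j-k}$, using Lemma \ref{weights}(ii) and the doubling property; this costs only constants independent of $j$.
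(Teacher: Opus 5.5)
The paper does not prove this lemma; it imports it from \cite[Theorem 3.2]{BX25}, so there is no in-text argument to compare yours against. Your proposal is a correct, self-contained proof of the standard kind. You split off the part of $f$ living on $Q^*$ and handle it by the weighted Fefferman--Stein inequality plus bounded overlap. For the far part you obtain the decay $\omega(Q)/\omega(P)\lesssim\beta^{j}$ from reverse doubling and Lemma \ref{weights}(iii), and play it against the $C2^{jn}$ overlap count. The hypothesis $r>-nt/\log_2\beta$ is used exactly where it must be, namely for $\sum_j(2^{n/r}\beta^{1/t})^j<\infty$.

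One step needs correcting. In the vector-valued global estimate you cannot move $\sup_{j\ge1}\max_{P\in N_j(Q)}$ outside the $\ell^u$-norm, because the scale realizing the supremum depends on $i$. Take $N$ functions living on the disjoint annuli $P_i\setminus P_{i-1}$ of a nested chain, each with average about $1$ over $P_i$. Then the left side is of order $N^{1/u}$, while every average of $F$ stays bounded. What Minkowski's inequality actually gives is
\[
\Big(\sum_i\big(\mathcal M f_i^{2}(x)\big)^u\Big)^{1/u}\lesssim\sum_{j\ge1}\sum_{P\in N_j(Q)}\frac{1}{\mu(P)}\int_P F\,\d\mu ,
\]
obtained by first bounding the supremum by the sum for each fixed $i$. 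This is precisely the form your next display uses, so nothing downstream changes.

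Two minor remarks. First, the provision for $r<1$ is vacuous, since $r>t>p>1$ (or $r=\infty$). Second, the comparability of $\omega$-measures of neighbouring same-generation cubes must be run through a common ball and the doubling of $\omega$ on balls, as you indicate at the end. This applies both inside $Q^*$ and for $P$ versus the generation-$(k-j)$ ancestor of $Q$. Lemma \ref{weights}(ii) for cubes does not apply directly, because adjacent dyadic cubes need not have a common ancestor at a comparable scale.
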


\section{Predual of weighted Bourgain-Morrey space} \label{predual BM}
In this section, we study the predual of weighted Bourgain-Morrey space on $X$.
First recall the concept of $(p,t) $-blocks; see for example, \cite[p. 32]{BRV99} in the Euclidean space case.
\begin{definition}
	Let $1< p \le t \le \infty $.  Let $\omega $ and $\omega' := \omega^{-1/ (p-1)} $ be  weights.   A function $b \in L^{p'} (\omega' ) $ is called a $ (p', t', \omega ') $-block if there exists a dyadic cube $Q \in \D$ such that supp $b \subset Q$ and 
	\begin{equation*}
	\|b\|_{L^{p'} (\omega') } : = \left( \int_{Q} |b(x)|^{p'} \omega^{-1 / (p-1)} (x) \d \mu (x)   \right)^{1/p'} \le \omega (Q)^{ 1/p' - 1/t' } =  \omega (Q)^{ 1/t - 1/p }.
	\end{equation*}
Let $1\le q' \le \infty $.
Similarly, a vector valued function $\vec b \in L^{p'} (\omega' ,\ell^{q'}  ) $ is called a $ (p', t', \omega ', \ell^{q'} ) $-block if there exists a dyadic cube $Q \in \D$ such that supp $\vec b \subset Q$ and 
\begin{equation*}
	\|\vec b\|_{L^{p'} (\omega', \ell^{q'} ) } : = \left( \int_{Q} \|\vec b(x)\|_{ \ell^{q'} }^{p'} \omega^{-1 / (p-1)} (x) \d \mu (x)   \right)^{1/p'} \le \omega (Q)^{ 1/p' - 1/t' } =  \omega (Q)^{ 1/t - 1/p }.
\end{equation*}
\end{definition}
Note that if $\omega \in A_\infty$, then $\omega '  \in (0,\infty) $ and is  a locally integrable function on $X$. Hence $\omega '$ is a weight.
\begin{definition}
			Let $1 < p \le t \le r \le  \infty $.   Let $\omega $ and $\omega' := \omega^{-1/ (p-1)} $ be  weights.  The block space $\H _{p', \omega '}^{t',  r' }   $ is the set of all $\mu$-measurable functions $f$ on $X$ which can be decomposed as 
		\begin{equation}\label{block decom}
			f = \sum_{k \in \mathbb Z} \sum_{\tau \in I_k}  \lambda_{k,\tau } b_{k,\tau }
		\end{equation}
	$\mu$-almost everywhere on $X$, where $ \{ \lambda_{k,\tau }  \}_{ k \in \mathbb Z, \tau \in I_k}  \in \ell^{r'} $  and, for each $k \in \mathbb Z $ and $\tau \in I_k  $, $b_{k,\tau }$ is a $(p', t', \omega ')$-block supported in $ Q^k_\tau$. Moreover, the norm of  $\H _{p', \omega' }^{t',  r' }   $  is defined by 
	\begin{equation*}
		\| f\|_{ \H _{p', \omega ' }^{t',  r' }   } : =\inf \left\{  \|\{ \lambda_{k,\tau }  \}_{k \in \mathbb Z, \tau \in I_k }\|_{ \ell^{r'}  }   \right\}
	\end{equation*}
where the infimum is taken over all decompositions of $f$ as in (\ref{block decom}).

Similarly, 	let $1 < p \le t \le r \le  \infty $ and $1\le q \le \infty $.   Let $\omega $ and $\omega' := \omega^{-1/ (p-1)} $ be  weights.  The block space $\H _{p', \omega '}^{t',  r' }  (\ell^{q'})  $ is the set of all $\mu$-measurable sequence valued functions $\vec f$ on $X$ which can be decomposed as 
\begin{equation}\label{block decom seq}
\vec 	f = \sum_{k \in \mathbb Z} \sum_{\tau \in I_k}  \lambda_{k,\tau } \vec  b_{k,\tau }
\end{equation}
$\mu$-almost everywhere on $X$, where $ \{ \lambda_{k,\tau }  \}_{ k \in \mathbb Z, \tau \in I_k}  \in \ell^{r'} $  and, for each $k \in \mathbb Z $ and $\tau \in I_k  $, $ \vec b_{k,\tau }$ is a $(p', t', \omega ', \ell^{q'})$-block supported in $ Q^k_\tau$. Moreover, the norm of  $\H _{p', \omega' }^{t',  r' }  (\ell^{q'})   $  is defined by 
\begin{equation*}
	\| f\|_{ \H _{p', \omega ' }^{t',  r' }  (\ell^{q'}) } : =\inf \left\{  \|\{ \lambda_{k,\tau }  \}_{k \in \mathbb Z, \tau \in I_k }\|_{ \ell^{r'}  }   \right\}
\end{equation*}
where the infimum is taken over all decompositions of $f$ as in (\ref{block decom seq}).
\end{definition}

We first consider the convergence of the series in (\ref{block decom}) and (\ref{block decom seq}).

\begin{theorem}
	Let $\omega \in A_\infty$. Let  $\beta_\omega : =  1 - (1- \alpha_0 ) ^{ q_\omega + A }  / [\omega ]_{A_  {q_\omega +A} }$ where $\alpha_0$  is the reverse doubling  constant of $X$ and $A >0$. 
	Let $ 1\le q \le \infty $.
	Let $1< p <t <\min \{ r,    ( -\log_2 \beta_\omega ) r /n \}  <\infty$ or $ 1< p <t <r =\infty $.
	
	{\rm (i)} 
	Assume that $ \{ \lambda_{k,\tau }  \}_{ k \in \mathbb Z, \tau \in I_k}  \in \ell^{r'} $  and, for each $k \in \mathbb Z $ and $\tau \in I_k  $, $\vec b_{k,\tau }$ is a $(p', t', \omega ' , \ell^{q'})$-block supported in $ Q^k_\tau$.
	Then the summation (\ref{block decom seq}) converges both  $\mu$-almost everywhere on $X$  and in  $L^1_{\rm loc} (\ell^{q'}) $ where $L^1_{\rm loc} (\ell^{q'})  $	is the $\ell^{q'}$ valued locally integrable function space.
	
		{\rm (ii)} 
	Assume that $ \{ \lambda_{k,\tau }  \}_{ k \in \mathbb Z, \tau \in I_k}  \in \ell^{r'} $  and, for each $k \in \mathbb Z $ and $\tau \in I_k  $, $b_{k,\tau }$ is a $(p', t', \omega ' )$-block supported in $ Q^k_\tau$.
	Then the summation (\ref{block decom}) converges both $\mu$-almost everywhere on $X$  and in  $L^1_{\rm loc}  $.
\end{theorem}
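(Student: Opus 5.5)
We will prove (i) and then read off (ii) by taking sequences with only one nonzero entry; (ii) is the case $q'=1$ with a scalar block placed in one coordinate. The strategy is the standard one for block spaces. We show that for every ball $B$, or equivalently for every dyadic cube $R=Q^{k_0}_{\tau_0}$,
\begin{equation*}
\sum_{k\in\mathbb Z}\sum_{\tau\in I_k}|\lambda_{k,\tau}|\int_{R}\|\vec b_{k,\tau}(x)\|_{\ell^{q'}}\,\d\mu(x)<\infty .
\end{equation*}
This gives absolute convergence in $L^1_{\rm loc}(\ell^{q'})$. By monotone convergence it also gives $\sum|\lambda_{k,\tau}|\,\|\vec b_{k,\tau}(x)\|_{\ell^{q'}}<\infty$ for $\mu$-a.e. $x\in R$. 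Covering $X$ by countably many cubes of a fixed generation (Lemma \ref{cube}(i)) then yields a.e. convergence on $X$, and the limit is independent of the ordering.

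To bound each term, we pair with a characteristic function via Hölder with the dual weights:
\begin{equation*}
\int_R\|\vec b_{k,\tau}\|_{\ell^{q'}}\d\mu\le \|\vec b_{k,\tau}\|_{L^{p'}(\omega',\ell^{q'})}\,\omega(Q^k_\tau\cap R)^{1/p}\le \omega(Q^k_\tau)^{1/t-1/p}\Big(\int_{Q^k_\tau}\chi_R\,\omega\,\d\mu\Big)^{1/p}.
\end{equation*}
Here we used $\omega'^{-p/p'}=\omega$. This is exactly the $(k,\tau)$-entry of the sequence whose $\ell^r$ norm defines $\|\chi_R\|_{M^{t,r}_{p,\omega}}$. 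Hölder in $\ell^{r'}$–$\ell^r$ then gives
\begin{equation*}
\sum_{k,\tau}|\lambda_{k,\tau}|\int_R\|\vec b_{k,\tau}\|_{\ell^{q'}}\d\mu\le \|\{\lambda_{k,\tau}\}\|_{\ell^{r'}}\,\|\chi_R\|_{M^{t,r}_{p,\omega}} .
\end{equation*}

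The right-hand side is finite by Lemma \ref{nontrival}: its hypothesis $t<\min\{r,\log_2(\beta_\omega^{-1})r/n\}$ is precisely the one assumed here, and the case $r=\infty$ is also covered by that lemma. For arbitrary balls we use $\chi_B\in M^{t,r}_{p,\omega}$ from the same lemma, or simply the covering of $B$ by finitely many dyadic cubes.

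The main point needing care is the Hölder step with weights. It requires $\omega'=\omega^{-1/(p-1)}$ to be positive and finite a.e. so that the identity $|b|=|b|\omega'^{1/p'}\omega'^{-1/p'}$ is legitimate. The paper notes this holds for $\omega\in A_\infty$. Beyond that, the argument reduces entirely to Lemma \ref{nontrival}.
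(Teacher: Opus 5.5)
Your proof is correct and follows the paper's argument in substance. Both show that $\sum_{k,\tau}|\lambda_{k,\tau}|\,\|\vec b_{k,\tau}\|_{\ell^{q'}}$ is integrable over each dyadic cube via the weighted H\"older inequality, and both deduce $\mu$-a.e.\ and $L^1_{\rm loc}(\ell^{q'})$ convergence from that. The difference is only in packaging: the paper splits into $k<k_0$ and $k\ge k_0$ and sums the geometric series from the reverse-doubling/$A_p$ decay inline, level by level with the cube count, whereas you apply $\ell^{r'}$--$\ell^{r}$ H\"older globally to get the bound $\|\{\lambda_{k,\tau}\}\|_{\ell^{r'}}\|\chi_R\|_{M^{t,r}_{p,\omega}}$ and cite Lemma \ref{nontrival}, whose proof is that same computation and whose hypotheses are implied by the present ones.
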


\begin{proof}
	We only prove (i) since (ii) is similar.
	We first claim that, for any given dyadic cube $Q^{k_0} _{\tau_0}\in \D$ with $ k_0 \in \mathbb Z , \tau_0 \in I_{k_0}$,
	\begin{equation} \label{abs integral}
		\int_{ Q^{k_0} _{\tau_0} } \sum_{k \in \mathbb Z} \sum_{\tau \in I_k}  |\lambda_{k,\tau } | \| \vec  b_{k,\tau } \|_{ \ell^{q'} } \d \mu (x) <\infty.
	\end{equation}
	In fact, by Tonelli's theorem, we have
	\begin{align*}
		\int_{ Q^{k_0} _{\tau_0} \in \D } \sum_{k \in \mathbb Z} \sum_{\tau \in I_k}  |\lambda_{k,\tau } | \| \vec  b_{k,\tau } \|_{ \ell^{q'} } \d \mu (x)  = \left( \sum_{ k = -\infty} ^{k_0 -1} +  \sum_{ k = k_0 } ^{\infty} 	 \right)  \sum_{\tau \in I_k}  \int_{ Q^{k_0} _{\tau_0}} |\lambda_{k,\tau } | \| \vec  b_{k,\tau } \|_{ \ell^{q'} } \d \mu (x) 
		:= S_1 +S_2.
	\end{align*}
For $S_1 $,  there is only one $ \tau_k  $ such that $Q ^{k_0}_{\tau_0}  \subset Q ^k _{\tau_k} $.
Since $ \omega \in A_{ q_\omega +A } $,  we have
\begin{equation*}
	\omega ( Q ^{k_0}_{\tau_0}  ) 	  \lesssim \beta_\omega  ^{ k_0 - k } \omega ( Q ^{k }_{\tau_k}  ),
\end{equation*} 
for every $ Q ^{k_0}_{\tau_0}   \subset Q ^{k }_{\tau_k}   $. Since $1/t -1/p <0 $, we have
\begin{equation*}
	\omega ( Q ^{k }_{\tau_k}  ) ^{1/t -1/p }  \lesssim   \beta_\omega  ^{ (k_0 - k)  (1/t-1/p ) }  \omega ( Q ^{k_0}_{\tau_0}  ) ^{1/t -1/p}.
\end{equation*}
By H\"older's inequality and $\beta_\omega \in (0,1)$, we obtain
\begin{align*}
S_1 =	& \sum_{ k = -\infty} ^{k_0 -1} \sum_{\tau \in I_k}  \int_{ Q^{k_0} _{\tau_0}} |\lambda_{k,\tau } | \| \vec  b_{k,\tau } \|_{ \ell^{q'} } \d \mu (x)   \\
	& \le \| \lambda \|_{  \ell^{r'} } \sum_{ k = -\infty} ^{k_0 -1}    \left\|  \| \vec  b_{k,{\tau_k} } \|_{ \ell^{q'} } \right\|_{ L^{ p ' }(\omega ') }  \omega (  Q ^{k_0}_{\tau_0} ) ^{ 1/p } \\
	&  \le \| \lambda \|_{  \ell^{r'} }\sum_{ k = -\infty} ^{k_0 -1} \omega (  Q^k_{\tau_k}  ) ^{ 1/t -1/p }  \omega (  Q ^{k_0}_{\tau_0} ) ^{ 1/p }  \\
	& \lesssim  \| \lambda \|_{  \ell^{r'} } \sum_{ k = -\infty} ^{k_0 -1}   \beta_\omega  ^{ (k_0 - k)  (1/t-1/p ) } \omega ( Q ^{k_0}_{\tau_0}  ) ^{1/t }  \\
	&\lesssim  \| \lambda \|_{  \ell^{r'} }   \omega ( Q ^{k_0}_{\tau_0}  ) ^{1/t } . 
\end{align*}
For $S_2$, 	as in Remark \ref{dya cube}, there are at most $c2^{(  k-k_0 )  n}$ dyadic cubes $ Q ^{k }_\tau    $ such that $Q ^{k}_{\tau}  \subset Q ^{k_0}_{\tau_0}  $ where $c$ is a constant depending only on $X$.
Since $ \omega \in A_{ q_\omega +A } $,  we have
\begin{equation*}
	\omega ( Q ^{k }_\tau  )  \lesssim \beta_\omega  ^{ k- k_0 } \omega ( Q ^{k_0}_{\tau_0}  ) ,
\end{equation*} 
for every $ Q ^{k }_\tau  \subset  Q ^{k_0}_{\tau_0}  $.
Let $ n /r + ( \log_2 \beta_\omega ) /t <0 $.
By H\"older's inequality, we obtain
\begin{align*}
	S_2 & =  \sum_{ k = k_0 } ^{\infty}  \sum_{\tau \in I_k , Q_\tau ^k  \subset Q^{k_0}_{\tau_0}} \int_{ Q_\tau ^k } |\lambda_{k,\tau } | \| \vec  b_{k,\tau } \|_{ \ell^{q'} } \d \mu (x) \\
	& \lesssim \sum_{ k = k_0 } ^{\infty}  \sum_{\tau \in I_k , Q_\tau ^k  \subset Q^{k_0}_{\tau_0}}|\lambda_{k,\tau } | \left\|  \| \vec  b_{k,\tau } \|_{ \ell^{q'} } \right\|_{ L^{ p ' }(\omega ') }  \omega (  Q ^{k}_{\tau} ) ^{ 1/p } \\
	&\lesssim  \sum_{ k = k_0 } ^{\infty}  \sum_{\tau \in I_k , Q_\tau ^k  \subset Q^{k_0}_{\tau_0}}|\lambda_{k,\tau } |  \omega (  Q ^{k}_{\tau} ) ^{ 1/t }  \\
		&\lesssim  \sum_{ k = k_0 } ^{\infty}  \beta_\omega  ^{ ( k- k_0 ) /t  } \omega ( Q ^{k_0}_{\tau_0}  )^{1/t}   \sum_{\tau \in I_k , Q_\tau ^k  \subset Q^{k_0}_{\tau_0}}|\lambda_{k,\tau } |    \\
				&\lesssim \omega ( Q ^{k_0}_{\tau_0}  )^{1/t}  \sum_{ k = k_0 } ^{\infty}  \beta_\omega  ^{ ( k- k_0 ) /t  }   2^{ (k-k_0) n /r }   \| \lambda \|_{\ell^{r'}} \\
	& \lesssim \| \lambda \|_{  \ell^{r'} }   \omega ( Q ^{k_0}_{\tau_0}  ) ^{1/t }   .
\end{align*}
Hence we prove (\ref{abs integral}). Then for any $Q \in \D$, we have
\begin{equation} \label{conver by abs inte}
	  \left\|  	\int_{ Q^{k_0} _{\tau_0} } \sum_{k \in \mathbb Z} \sum_{\tau \in I_k}  \lambda_{k,\tau }   \vec  b_{k,\tau } \d \mu (x) \right\|_{ \ell^{q'} }	 \le 	\int_{ Q^{k_0} _{\tau_0} } \sum_{k \in \mathbb Z} \sum_{\tau \in I_k}  |\lambda_{k,\tau } | \| \vec  b_{k,\tau } \|_{ \ell^{q'} } \d \mu (x) <\infty,
\end{equation}
which, together with the arbitrariness of $Q \in \D$  and (i)  of Lemma \ref{cube}, further implies
that the series in (\ref{block decom seq}) converges  in $\ell^{q'}$ $\mu$-almost everywhere on $X$.

The following claim comes from \cite[Proof of Theorem 2.8]{ZYY26}. For any given $k\in \mathbb Z$ and $\tau \in I_k  $, there exists a finite set $ \Lambda_{k,\tau}  \subset I_k $  of indices such that for any $y \in Q^k_\tau$  and $ R \in  ( \gamma ^k, \gamma ^{k - 1}  ) $, $ B (y,  R) \subset  \cup_{ \beta \in  \Lambda_{k,\tau} }  Q^k_\beta $. (Note that in Remark \ref{dya cube}, we assume that $\gamma = 1/2$ without loss of generality.)

From this claim and (\ref{conver by abs inte}),
we also infer that the series in (\ref{block decom seq}) converges in $L^1_{\rm loc} (\ell^{q'}) $.
\end{proof}

\begin{lemma}[Corollary 1.3.22, \cite{HNVW16}]
	 \label{dual banach}
	Let $(X, \mathcal A,\mu)$ be a $\sigma$-finite measure space and let $Y$ be
	reflexive or $Y^\ast$ be separable. Then for all $1\le p <\infty $ we have an isometric 	isomorphism
\begin{equation*}
	(L^p (S ,Y) )^\ast =  L^{p'} (S,Y^\ast)  .
\end{equation*}
	The $\sigma$-finiteness assumption is redundant for $1 < p < \infty.$
\end{lemma}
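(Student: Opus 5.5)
The plan is the classical argument, routed through the Radon--Nikod\'ym property (RNP) of $Y^\ast$. The first step is to define the pairing $J\colon L^{p'}(S,Y^\ast)\to (L^p(S,Y))^\ast$ by
\begin{equation*}
\langle J g, f\rangle=\int_S \langle f(s), g(s)\rangle \,\d\mu(s).
\end{equation*}
H\"older's inequality applied to $s\mapsto \|f(s)\|_Y\|g(s)\|_{Y^\ast}$ gives $\|Jg\|\le \|g\|_{L^{p'}(S,Y^\ast)}$.

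For the reverse inequality I would approximate $g$ in $L^{p'}(S,Y^\ast)$ by simple functions $\sum_i \chi_{A_i} y_i^\ast$. For each $i$, choose a unit vector $y_i\in Y$ with $\langle y_i,y_i^\ast\rangle\ge (1-\varepsilon)\|y_i^\ast\|$. Then test $Jg$ against
\begin{equation*}
f=\sum_i \chi_{A_i}\|y_i^\ast\|^{p'-1}y_i,
\end{equation*}
which is the vector-valued analogue of $|g|^{p'-1}\sgn g$. This shows $J$ is isometric. When $p=1$, use the $\sigma$-finiteness to pick sets of finite measure where $\|g\|_{Y^\ast}$ is nearly maximal.

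Surjectivity is the main step. Fix $\phi\in (L^p(S,Y))^\ast$. By $\sigma$-finiteness, work first on a set $S_0$ of finite measure. For measurable $A\subset S_0$, define $F(A)\in Y^\ast$ by $F(A)(y)=\phi(\chi_A y)$. Then $\|F(A)\|\le\|\phi\|\mu(A)^{1/p}$, so $F$ is countably additive (here $p<\infty$ is used), $\mu$-absolutely continuous, and of bounded variation on $S_0$. The variation bound comes from testing $\phi$ on $\sum_i \chi_{A_i} y_i$ with $y_i$ norming $F(A_i)$.

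Next I use that $Y^\ast$ has the RNP. Reflexive spaces have it, and separable dual spaces have it by the Dunford--Pettis theorem. Hence there is a Bochner integrable $g\colon S_0\to Y^\ast$ with $F(A)=\int_A g\,\d\mu$, so $\phi(f)=\int\langle f,g\rangle\,\d\mu$ for simple $f$. Testing against the truncated functions $f$ constructed in the isometry step, restricted to $\{\|g\|\le N\}$, gives
\begin{equation*}
\|g\chi_{\{\|g\|\le N\}}\|_{L^{p'}}\le\|\phi\|.
\end{equation*}
Monotone convergence then yields $g\in L^{p'}$ with norm at most $\|\phi\|$. Patching over an exhausting sequence of sets $S_0$ is consistent by uniqueness of densities, and density of simple functions in $L^p(S,Y)$ gives $\phi=Jg$.

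The main obstacle is the RNP step, since it is exactly where the hypothesis on $Y$ enters; I would simply invoke the Dunford--Pettis / Phillips theorems for it.

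For the final remark (that $\sigma$-finiteness is redundant when $1<p<\infty$): each $f\in L^p(S,Y)$ vanishes outside a $\sigma$-finite set. Choose $\sigma$-finite sets $S_n$ with $\|\phi|_{L^p(S_n,Y)}\|\to\|\phi\|$, and let $T$ be their union. Uniform convexity of $\ell^p$-sums then shows that $\phi$ vanishes on functions supported off $T$, since otherwise the norm of $\phi$ could be increased. So the $\sigma$-finite case applies on $T$.
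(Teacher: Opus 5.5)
Your proposal is correct. The paper gives no proof of this lemma; it quotes Corollary 1.3.22 of \cite{HNVW16}, where the result is obtained exactly as you do: first $(L^p(S,Y))^\ast=L^{p'}(S,Y^\ast)$ whenever $Y^\ast$ has the Radon--Nikod\'ym property, then the Phillips and Dunford--Pettis theorems for reflexive and separable dual spaces. In your non-$\sigma$-finite reduction for $1<p<\infty$, what you actually use is not uniform convexity but the identity $\|\phi\|^{p'}=\|\phi|_{L^p(T,Y)}\|^{p'}+\|\phi|_{L^p(S\setminus T,Y)}\|^{p'}$ for the dual of an $\ell^p$-sum, which forces $\phi|_{L^p(S\setminus T,Y)}=0$ because $p'<\infty$.
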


Denote by $L^{p} (\omega, \ell^q) $ the weighted $\ell^q$-valued Lebesuge function spaces such that 
\begin{equation*}
	\|f\|_{ L^{p} (\omega, \ell^q) } := \left(  \int_X |f(x)|^p \omega (x) \d \mu (x) \right)^{1/p} <\infty.
\end{equation*}
When $\omega \equiv 1$, we drop it.

\begin{lemma} \label{dual Lp ell q}
	Let $ \omega$ and $\omega ' := \omega^{ -1/ (p-1) } $ be weights. Then for $ 1< p,q <\infty $, we have an  isometric 	isomorphism
	\begin{equation*}
		(L^{p '} (\omega ' , \ell ^{q ' }) )^\ast =  L^{p} (\omega, \ell^q)  
	\end{equation*}
 in the following sense:

 {\rm (i)} for any $\vec g \in L^{p} (\omega, \ell^q)$, the linear functional $J_{\vec g} $, defined by setting, for any $\vec f \in L^{p '} (\omega ' , \ell ^{q ' }) $, $J_{\vec g} (\vec f) := \int_X  \sum_j f_j (x) g_j (x) \d \mu  (x) $, is bounded on $ L^{p '} (\omega ' , \ell ^{q ' }) $;
 
 {\rm (ii)} for any $J \in 	(L^{p '} ( \omega',  \ell ^{q ' }) )^\ast$, there exists a unique $\vec  g \in L^{p} (\omega, \ell^q)$ such that $J=J_{\vec g}$ in $	(L^{p '} ( \omega',  \ell ^{q ' }) )^\ast$; moreover,  $\|g\|_ { L^{p} (\omega, \ell^q)}  = \| J\|_{ (L^{p '} ( \omega',  \ell ^{q ' }) )^\ast }$. 
 
\end{lemma}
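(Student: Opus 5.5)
The plan is to reduce the weighted statement to the unweighted duality $(L^{p'}(\ell^{q'}))^\ast = L^{p}(\ell^{q})$ of Lemma \ref{dual banach}, via multiplication operators. Since $\mu$ is $\sigma$-finite (doubling, and $X$ is covered by countably many dyadic cubes) and $\ell^{q'}$ is reflexive for $1<q<\infty$, Lemma \ref{dual banach} applies.

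Define $U\vec f := \omega'^{1/p'}\vec f$. Since $\|U\vec f\|_{L^{p'}(\ell^{q'})} = \|\vec f\|_{L^{p'}(\omega',\ell^{q'})}$, $U$ is an isometric isomorphism from $L^{p'}(\omega',\ell^{q'})$ onto $L^{p'}(\ell^{q'})$. Similarly, $V\vec g := \omega^{1/p}\vec g$ is an isometric isomorphism from $L^{p}(\omega,\ell^q)$ onto $L^p(\ell^q)$.

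The key algebraic identity is
\[
\omega'^{1/p'}\omega^{1/p} = \omega^{-\frac{1}{(p-1)}\cdot\frac{p-1}{p}}\,\omega^{1/p} = 1 .
\]
Hence $\int_X \sum_j f_j g_j \,\d\mu = \int_X \sum_j (Uf)_j (Vg)_j \,\d\mu$.

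For (i), apply Hölder's inequality in $\ell^q$ pointwise and then in $L^p$. This gives
\[
|J_{\vec g}(\vec f)| \le \int_X \|\vec f\|_{\ell^{q'}}\omega'^{1/p'}\,\|\vec g\|_{\ell^q}\omega^{1/p}\,\d\mu \le \|\vec f\|_{L^{p'}(\omega',\ell^{q'})}\|\vec g\|_{L^p(\omega,\ell^q)} .
\]
So $J_{\vec g}$ is bounded with $\|J_{\vec g}\|\le\|\vec g\|_{L^p(\omega,\ell^q)}$.

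For (ii), take $J$ in the dual and set $\tilde J := J\circ U^{-1} \in (L^{p'}(\ell^{q'}))^\ast$, which has the same norm. By Lemma \ref{dual banach}, there is a unique $\vec h\in L^p(\ell^q)$ with $\tilde J(\vec u)=\int_X\sum_j u_j h_j\,\d\mu$ and $\|\vec h\|_{L^p(\ell^q)}=\|\tilde J\|$. Here I identify the abstract duality pairing with the concrete integral pairing, which is what the standard representation gives. Put $\vec g := V^{-1}\vec h = \omega^{-1/p}\vec h$. Then, using the key identity,
\[
J(\vec f)=\tilde J(U\vec f)=\int_X\sum_j \omega'^{1/p'} f_j\, \omega^{1/p} g_j\,\d\mu=J_{\vec g}(\vec f),
\]
and $\|\vec g\|_{L^p(\omega,\ell^q)}=\|\vec h\|_{L^p(\ell^q)}=\|J\|$.

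Uniqueness follows because $J_{\vec g}=0$ forces $\vec h=V\vec g=0$, by the uniqueness part of Lemma \ref{dual banach}. Linearity is clear, and (i) and (ii) together give the isometric isomorphism.

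The only delicate points are the following. First, the weights must be a.e. finite and positive so that $U$ and $V$ are bijections. This holds since $\omega$ and $\omega'$ are weights by hypothesis. Second, the integral pairing must be absolutely convergent, which the Hölder bound in (i) provides. I expect no real obstacle beyond checking these measurability and positivity details.
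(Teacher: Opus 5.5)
Your proposal is correct and matches the paper's proof: the paper also passes from $J$ to a functional $J'$ on $L^{p'}(\ell^{q'})$ through multiplication by $(\omega')^{1/p'}=\omega^{-1/p}$, applies Lemma \ref{dual banach}, and takes the representing function to be $\omega^{-1/p}$ times the unweighted one, with the norm identity coming from the isometry. Your explicit uniqueness argument, pulling back through $V$ to the uniqueness in the unweighted duality, is a small addition that the paper leaves implicit.
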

\begin{proof}
	(i) Using H\"older's inequality, we obtain (i).
	
	(ii)
	Since $\ell^q $ is separable, by Lemma \ref{dual banach}, we have
	\begin{equation} \label{dual L p ell q}
		(L^{p '} ( \ell ^{q ' }) )^\ast =  L^{p} ( \ell^q) .
	\end{equation}
Now let $ J \in 	(L^{p '} ( \omega',  \ell ^{q ' }) )^\ast$. 
For a function $\vec f\in L^{p '} ( \omega',  \ell ^{q ' })  $, $ \vec  f  (  \omega ^\prime)^{ 1/ p' }  \in  L^{p '} ( \ell ^{q ' })$. 
Define $J '  \in  (L^{p '} ( \ell ^{q ' }) )^\ast   $ by 
\begin{equation*}
	J'  (  \vec f  (  \omega ^\prime)^{ 1/ p' } ) := J (\vec f) .
\end{equation*}
Then by  (\ref{dual L p ell q}), there exists a unique $\vec g \in  L^{p} ( \ell^q)$ such that 
\begin{align*}
		J'  (  f  (  \omega ^\prime)^{ 1/ p' } ) & = \int_X \sum_{j \in \mathbb Z} (  \omega ^\prime  (x) )^{ 1/ p' } f_j (x) g_j (x) \d \mu (x) \\
		& = \int_X \sum_{j \in \mathbb Z}  f_j (x) g_j (x) \omega ^{-1/p} (x) \d \mu (x),
\end{align*}
and $\| \vec g \|_{L^{p} ( \ell^q)  } = \| J' \|_{  (L^{p '} ( \ell ^{q ' }) )^\ast } $.
Set  $ \vec h : =   \vec g   \omega ^{-1/p} \in L^{p} (\omega, \ell ^q)$. Then 
\begin{equation*}
	 J (\vec f) = 	J'  (  \vec f  (  \omega ^\prime)^{ 1/ p' } ) = \int_X \sum_{j \in \mathbb Z}  f_j (x) h_j (x)\d \mu (x) 
\end{equation*}
Note that 
\begin{equation*}
	\| J\|_{ 	(L^{p '} ( \omega',  \ell ^{q ' }) )^\ast } = \| J' \|_{  (L^{p '} ( \ell ^{q ' }) )^\ast }.
\end{equation*}
Hence we obtain 
\begin{equation*}
		\| J\|_{ 	(L^{p '} ( \omega',  \ell ^{q ' }) )^\ast } = \| \vec g \|_{L^{p} ( \ell^q)  } = \| \vec h  \|_{ L^{p} (\omega, \ell ^q) } .
\end{equation*}
Thus we prove (ii) and the proof is complete.
\end{proof}
Now we are ready to obtain the predual of weighted Bourgain-Morrey spaces.
\begin{theorem} \label{predual vector}
	Let $1 <q <\infty $. Let $\omega \in A_p$ for $p \in (1,\infty)$. Let $\beta_\omega : =  1 - (1- \alpha_0 ) ^{ p }  / [\omega ]_{A_  {p} }$ where $\alpha_0$  is the reverse doubling  constant of $X$. 
Let $ 1< p< t < \min\{ r, \log_2 ( \beta_\omega ^{-1} )  r  /n \}  <\infty$ or let $ 1 <p \le t <r =\infty $.
	Then the dual of $  \H _{p', \omega '  }^{t',  r' } (\ell^{q'}) $ is $ M_{p,\omega}^{t,r} (\ell^q ) $ in the following sense:
	
	{\rm (i)} for any $\vec g \in M_{p,\omega}^{t,r} (\ell^q )$, the linear functional $J_{\vec g} $, defined by setting, for any $\vec f \in \H _{p', \omega '  }^{t',  r' } (\ell^{q'})$, $J_{\vec g} (\vec f) := \int_X  \sum_{j \in \mathbb Z} f_j (x) g_j (x) \d \mu  (x) $, is bounded on $ \H _{p', \omega '  }^{t',  r' } (\ell^{q'})$;
	
		{\rm (ii)} for any $J \in \left(  \H _{p', \omega '  }^{t',  r' }  (\ell^{q'}) \right)^\ast$, there exists a unique $\vec  g \in M_{p,\omega}^{t,r}(\ell^q) $ such that $J=J_{\vec g}$ in $\left(  \H _{p', \omega '  }^{t',  r' } (\ell^{q'}) \right)^\ast$; moreover,  $\|g\|_ { M_{p,\omega}^{t,r} (\ell^q )}  = \| J\|_{ \left(  \H _{p', \omega '  }^{t',  r' } (\ell^{q'}) \right)^\ast }$. 
\end{theorem}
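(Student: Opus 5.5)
For (i) I would fix $\vec g\in M_{p,\omega}^{t,r}(\ell^q)$ and $\vec f\in \H _{p', \omega '  }^{t',  r' } (\ell^{q'})$ with a decomposition $\vec f=\sum_{k,\tau}\lambda_{k,\tau}\vec b_{k,\tau}$ as in (\ref{block decom seq}). By the convergence theorem just proved, the series converges $\mu$-a.e. and in $L^1_{\rm loc}(\ell^{q'})$. The same absolute-integrability estimate, run with $\|\vec g\|_{\ell^q}$ inserted, should let me interchange sum and integral. For each block, H\"older's inequality in $\ell^q$ and then in $L^{p}(\omega)$ against $L^{p'}(\omega')$ (note $\omega^{1/p}(\omega')^{1/p'}=1$) gives
\begin{equation*}
\Big|\int_X\sum_j b_{k,\tau,j}g_j\,\d\mu\Big|\le \|\vec b_{k,\tau}\|_{L^{p'}(\omega',\ell^{q'})}\Big(\int_{Q^k_\tau}\|\vec g\|_{\ell^q}^p\omega\,\d\mu\Big)^{1/p}\le \omega(Q^k_\tau)^{1/t-1/p}\Big(\int_{Q^k_\tau}\|\vec g\|_{\ell^q}^p\omega\,\d\mu\Big)^{1/p}.
\end{equation*}
Summing with weights $|\lambda_{k,\tau}|$ and applying H\"older in $\ell^{r'}$–$\ell^r$ gives $|J_{\vec g}(\vec f)|\le \|\lambda\|_{\ell^{r'}}\|\vec g\|_{M_{p,\omega}^{t,r}(\ell^q)}$. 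Taking the infimum over decompositions shows that $J_{\vec g}$ is well defined and satisfies $\|J_{\vec g}\|\le\|\vec g\|$. For well-definedness independent of the decomposition, I would justify the interchange $\int\sum=\sum\int$ by dominated convergence. The dominating function is $\sum|\lambda_{k,\tau}|\|\vec b_{k,\tau}\|_{\ell^{q'}}\|\vec g\|_{\ell^q}$, whose integral is finite by the bound above.

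For (ii), let $J\in(\H _{p', \omega '  }^{t',  r' } (\ell^{q'}))^\ast$ and fix a cube $Q\in\D$. Any $\vec f\in L^{p'}(\omega',\ell^{q'})$ supported in $Q$ equals $\|\vec f\|_{L^{p'}(\omega',\ell^{q'})}\omega(Q)^{1/p-1/t}$ times a block. Hence $\|\vec f\|_{\H}\le \omega(Q)^{1/p-1/t}\|\vec f\|_{L^{p'}(\omega',\ell^{q'})}$, so $J$ restricts to a bounded functional on $L^{p'}(\omega'\chi_Q,\ell^{q'})$. Lemma \ref{dual Lp ell q} (applied on $Q$) then yields a unique $\vec g_Q\in L^p(\omega\chi_Q,\ell^q)$ representing it. Uniqueness gives compatibility, $\vec g_{Q}=\vec g_{R}$ a.e. on $Q$ for $Q\subset R$. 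The cubes $\{Q^k_\tau\}$ for $k\to-\infty$ are nested, and since $\mu(X)=\infty$ and reverse doubling holds, I expect their unions to exhaust $X$ up to null sets. If two top-level chains do not merge, I would instead work cube-by-cube at each level. In either case this defines a global $\vec g$ with $J(\vec f)=J_{\vec g}(\vec f)$ for every finite linear combination of blocks.

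The main step, and the one I expect to be the obstacle, is showing $\|\vec g\|_{M_{p,\omega}^{t,r}(\ell^q)}\le\|J\|$ together with density. For a finite set $F$ of indices $(k,\tau)$, put $a_{k,\tau}=\omega(Q^k_\tau)^{1/t-1/p}\|\vec g\chi_{Q^k_\tau}\|_{L^p(\omega,\ell^q)}$. By the duality in Lemma \ref{dual Lp ell q}, choose normalized blocks $\vec b_{k,\tau}$ on $Q^k_\tau$ with $J_{\vec g}(\vec b_{k,\tau})\ge a_{k,\tau}-\epsilon_{k,\tau}$; these are essentially $\omega(Q)^{1/t-1/p}\|\vec g\|_{\ell^q}^{p-q}|g_j|^{q-1}\sgn(g_j)\omega/\|\cdot\|$. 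Choose coefficients $\lambda_{k,\tau}=a_{k,\tau}^{r-1}/\|a\|_{\ell^r(F)}^{r-1}$, which have unit $\ell^{r'}$ norm (with the obvious modification when $r=\infty$, i.e.\ $r'=1$, where I pick a single cube). Then $\|a\|_{\ell^r(F)}\le J(\sum_F\lambda_{k,\tau}\vec b_{k,\tau})+\epsilon\le\|J\|+\epsilon$. Letting $F\uparrow$ and $\epsilon\to0$ gives the bound. Combined with (i) this yields equality of norms.

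Finally, finite combinations of blocks are dense in $\H$: the tails of a decomposition have norm at most $\|\lambda\|_{\ell^{r'}(\text{tail})}\to0$ because $r'<\infty$, i.e.\ $r>1$. So $J=J_{\vec g}$ on all of $\H$. Uniqueness of $\vec g$ follows by testing against blocks on each cube and invoking uniqueness in Lemma \ref{dual Lp ell q}. I would also verify in passing that $\H$ is a normed space, so that its dual makes sense.
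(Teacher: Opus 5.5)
Your proposal is correct and follows essentially the same route as the paper: H\"older's inequality for (i), a local representation of $J$ on each dyadic cube via Lemma~\ref{dual Lp ell q} glued together by uniqueness, the bound $\|\vec g\|_{M_{p,\omega}^{t,r}(\ell^q)}\le\|J\|$ via norming blocks on finite index sets combined with $\ell^{r}$--$\ell^{r'}$ duality, density of finite block sums since $r'<\infty$, and uniqueness by testing on cubes. The differences are cosmetic: you write down the extremal coefficients $a_{k,\tau}^{r-1}/\|a\|_{\ell^r(F)}^{r-1}$ explicitly, whereas the paper takes a supremum over nonnegative $\xi\in\ell^{r'}$; and of your two gluing options, the level-by-level one is the one that works in general, since a single nested chain of cubes need not exhaust $X$.
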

\begin{proof}
	We first prove (i). 
	Let $\vec g \in M_{p,\omega}^{t,r} (\ell^q) $. For any $\vec f \in  \H _{p', \omega '  }^{t',  r' } (\ell^{q'})$, there exist a sequence $\{ \lambda_{k,\tau } \}_{ k \in \mathbb Z,  \tau \in I_k }  \in \ell^{r'}$ and a sequence $\{\vec  b_{k,\tau} \}_{ k \in \mathbb Z,  \tau \in I_k } $ of $(p',t', \omega ', \ell^{q'})$-blocks  such that for $\mu$-almost every $x \in X$, $\vec f (x) =  \sum_{k \in \mathbb Z} \sum_{\tau \in I_k}  \lambda_{k,\tau } \vec  b_{k,\tau }$ and $ \| \{ \lambda_{k,\tau } \}_{ k \in \mathbb Z,  \tau \in I_k }  \|_{ \ell^{r'} } \le (1+\epsilon)  \|\vec f\|_{ \H _{p', \omega ' }^{t',  r' }  (\ell^{q'}) } $. By H\"older's inequality,
	\begin{align*}
		|J_{\vec g} (\vec f) | & \le \int_X \sum_{j \in \mathbb Z} | f_j (x)  g_j(x)|  \d \mu  (x)  \\
		& \le \sum_{k \in \mathbb Z} \sum_{\tau \in I_k}  | \lambda_{k,\tau } | \int_{Q^k_\tau } \sum_{j \in \mathbb Z}   | b_{k,\tau }^j |  | g_j (x)|  \d \mu (x) \\
		& \le  \sum_{k \in \mathbb Z} \sum_{\tau \in I_k}  | \lambda_{k,\tau } | 
		\left( \int_{Q^k_\tau }   \left( \sum_{j \in \mathbb Z} | g_j (x)|^q \right)^{p/q}  \omega  (x) \d \mu (x)  \right)^{1/p} \left( \int_{Q^k_\tau }  \left( \sum_{j \in \mathbb Z}   | b_{k,\tau }^j |^{q'} \right)^{p'/{q'}}  \omega ^{- p'/p} (x) \d \mu (x)  \right)^{1/p'}
		\\
		& \le  \sum_{k \in \mathbb Z} \sum_{\tau \in I_k}  | \lambda_{k,\tau } | 
		\left( \int_{Q^k_\tau }  \left( \sum_{j \in \mathbb Z} | g_j (x)|^q \right)^{p/q}  \omega  (x) \d \mu (x)  \right)^{1/p} \omega (Q)^{ 1/t - 1/p } \\
		& \le  \left(  \sum_{k \in \mathbb Z} \sum_{\tau \in I_k}  | \lambda_{k,\tau } |^{r'} \right) ^{1/r'}
		\| \vec g\|_{ M_{p,\omega}^{t,r} (\ell^q ) } \\
		& \le (1+ \epsilon ) \|\vec  f\|_{ \H _{p', \omega ' }^{t',  r' } (\ell^{q'}) }  \|\vec g\|_{ M_{p,\omega}^{t,r} (\ell^q) } .
	\end{align*}
Letting $\epsilon\to 0^+$ and taking the  supremum over $ \|\vec  f\|_{ \H _{p', \omega ' }^{t',  r' } (\ell^{q'}) }  \le 1$, we obtain $  \| J_{\vec g} \|_{ \left(  \H _{p', \omega '  }^{t',  r' } (\ell^{q'}) \right)^\ast } \le \|g\|_ { M_{p,\omega}^{t,r} (\ell^q )} $.

	Next we prove (ii). Let $J \in \left(  \H _{p', \omega '  }^{t',  r' }  (\ell^{q'}) \right)^\ast $. We divide the proof into the following four
	steps.
	
	Step 1. 
	Note that for any $k \in \mathbb Z $ and $\tau \in I_k$ and for any  $\vec  f  \in L^{p'} (\omega ',  Q_\tau^k, \ell^{q'} )$ with $ \| \vec  f   \|_{ L^{p'} (\omega ',  Q_\tau^k, \ell^{q'} ) } : = \left( \int_{ Q^k_\tau } \| \vec f (x) \|_{\ell^{q'}} ^{p'} \omega ' (x) \d \mu (x)    \right) ^{1/p'} \neq 0  $, 
	\begin{equation*}
		\frac{\omega (Q_\tau^k )^{1/t-1/p}   }{  \| \vec  f   \|_{ L^{p'} (\omega ',  Q_\tau^k, \ell^{q'} ) } } \vec f \chi_{ Q_\tau^k }
	\end{equation*}
	is a $ (p', t', \omega ', \ell^{q'})$-block. Thus, the functional $J_{k,\tau}  $, defined by $ J_{k,\tau} (\vec  f) := J ( f \chi_{ Q_\tau^k }  ) $ for any $\vec  f  \in L^{p'} (\omega ',  Q_\tau^k , \ell^{q'}) $, belongs to $ \left(   L^{p'} (\omega ', \ell^{q'} ) \right) ^\ast   $. From this and $ p , q \in (1,\infty) $, by Lemma \ref{dual Lp ell q}, we deduce that there exists a unique function $\vec  g_{k,\tau}  \in L^p (\omega, Q_\tau^k ,\ell^q ) $  such that,  for any $ f  \in  L^{p'} (\omega ' ,Q_\tau^k, \ell^{q'} ) $,
	\begin{equation}\label{J Q}
		J_{k,\tau} (\vec f)  = \int_{ Q^k_\tau } \sum_{j \in \mathbb Z} f_j (x) g_{k,\tau}^j (x) \d \mu  (x).
	\end{equation}
	By the uniqueness of $ \{\vec  g_{k,\tau} \}_{k \in \mathbb Z, \tau \in I_k} $ and both (i) and (ii) of lemma \ref{cube}, there exists a unique function $\vec  g \in L^p_{\rm loc} (\omega, \ell^q)$ such that, for any $k, \ell \in \mathbb Z$, $ \tau \in I_k $  and $\beta \in I_\ell$ satisfying $ Q^k_\tau \subset Q^\ell_\beta $, $\vec g = \vec g_{ k,\tau } = \vec g_{\ell , \beta} $  $\mu$-almost everywhere on $Q^k_\tau$. From this and $ (\ref{J Q}) $, we get that, for any $\vec  f \in  L^{p'} (\omega ',  Q_\tau^k, \ell^{q'} )$ with  $k \in \mathbb Z$ and $ \tau \in I_k $,
	\begin{equation}\label{J f Q}
		J(\vec f \chi_{ Q^k_\tau })  = J_{k,\tau} ( \vec f) =\int_{ Q^k_\tau } \sum_{j \in \mathbb Z} f_j (x) g_{k,\tau}^j (x) \d \mu  (x)=\int_{ Q^k_\tau }\sum_{j \in \mathbb Z} f_j (x) g_j (x) \d \mu  (x).
	\end{equation} 
	Step 2. In this step, we show $\vec g \in  M_{p,\omega}^{t,r} (\ell^q)$ with $ \| \vec g\|_{ M_{p,\omega}^{t,r} (\ell^q)} \le \|J\|_{ \left(  \H _{p', \omega ' }^{t',  r' } (\ell^{q'}) \right )^\ast   } $. For each $j,k\in \mathbb Z $, $\tau \in I_k$, and $x\in X$, let
	\begin{equation} \label{block b g}
		b_{k,\tau}^j := \begin{cases} \omega (Q^k_\tau ) ^{1/t-1/p}  \| \vec g\|_{ L^p (\omega, Q^k_\tau , \ell^{q}) } ^{1-p}   |g_j(x)|^{q-1} \omega (x)  \| \vec g(x)\|_{\ell^q}^{(p-q)}    \chi_{ Q^k_\tau } (x) 
			& {\rm if}\; \| \vec  g\|_{ L^p (\omega, Q^k_\tau, \ell^q) } \neq 0, \\
			0, & {\rm if}\; \| \vec  g\|_{ L^p (\omega, Q^k_\tau, \ell^q) } = 0.
		\end{cases}
	\end{equation} 
	Then for $ \| \vec g\|_{ L^p (\omega, Q^k_\tau , \ell^{q}) } \neq 0 $,
	\begin{align*}
		\| \vec b_{k,\tau} \|_{L^{p'} (\omega ' , Q^k_\tau , \ell^{q'})  } & = \omega (Q^k_\tau ) ^{1/t-1/p}  \| \vec g\|_{ L^p (\omega, Q^k_\tau , \ell^{q}) } ^{1-p} \left( \int_{ Q^k_\tau }  \left(  \sum_{j \in \mathbb Z}  |  g_j (x) |^{ (q-1) q'  } \right)^{p'/q'}  \| \vec g(x)\|_{\ell^q}^{ (p-q) p ' } \omega(x) \d \mu  (x)  \right)^{1/p'} \\
		& = \omega (Q^k_\tau ) ^{1/t-1/p}  .
	\end{align*}
	Hence $\vec b_{k,\tau} $ is a $ (p ', t', \omega ', \ell^{q'})$-block supported in $Q^k_\tau $. Fix $x_0 \in X$. For any $N \in \mathbb N$, let 
	\begin{equation*}
		\Omega_N := \left\{ (k,\tau) : |k|\le N, \alpha \in I_k,   Q^k_\tau \subset B ( x_0,N)   \right\}.
	\end{equation*}
	By (\ref{double}) and Remark \ref{dya cube}, we have for each $N \in \mathbb N$, $\sharp	\Omega_N <\infty $. From this, (\ref{block b g}), (\ref{J f Q}) and $J \in \left(  \H _{p', \omega '  }^{t',  r' }  (\ell^{q'}) \right)^\ast $, we obtain, for any nonnegative sequence $\{ \xi_{k,\tau}\}_{ k\in \mathbb Z, \tau \in I_k} \in \ell^{r'}  $ with $ \| \{ \xi_{k,\tau}\}_{ k\in \mathbb Z, \tau \in I_k} \|_{ \ell ^{r'}} \le 1  $,
	\begin{align*}
		\sum_{ (k,\tau ) \in \Omega_N  } \xi_{k,\tau} \omega (Q_\tau ^k  ) ^{1/t-1/p} \| \vec g \|_{L^p (\omega ,Q_\tau ^k ,\ell^q )}  
		&= \sum_{ (k,\tau ) \in \Omega_N  }  \xi_{k,\tau}  \int_{Q^k_\tau } \sum_{j \in \mathbb Z}  b_{k,\tau}^j (x) |g_j(x) | \d \mu  (x) \\
		& = \sum_{ (k,\tau ) \in \Omega_N  }  \xi_{k,\tau}  J (  \{ b_{k,\tau}^j \sgn g_j  \}_{j \in \mathbb Z}  ) \\
		& = J  \left( \sum_{ (k,\tau ) \in \Omega_N  } \xi_{k,\tau}  \{ b_{k,\tau}^j \sgn g_j  \}_{j \in \mathbb Z}  \right)  \\
		& \le \|J\|_{ (  \H _{p', \omega ' }^{t',  r' }  (\ell^{q'} ) )^\ast    }  \left\| \sum_{ (k,\tau ) \in \Omega_N  } \xi_{k,\tau}  \{ b_{k,\tau}^j \sgn g_j  \}_{j \in \mathbb Z}   \right\|_{ \H _{p', \omega ' }^{t',  r' } (\ell^{q'}) }  \\
		& \le \|J\|_{ (  \H _{p', \omega ' }^{t',  r' }  (\ell^{q'} ) )^\ast   }  \| \{ \xi_{k,\tau}\}_{ k\in \mathbb Z, \tau \in I_k} \|_{ \ell ^{r'}}  \\
		& \le \|J\|_{ (  \H _{p', \omega ' }^{t',  r' } (\ell^{q'} )  )^\ast   }.
	\end{align*}
	By this and the arbitrariness of $N \in \mathbb N$, we have
	\begin{align*}
		\| \vec  g\|_{ M_{p,\omega}^{t,r} (\ell^q ) } & = \bigg\|  \bigg\{\omega (Q_\tau ^k  ) ^{1/t-1/p}\bigg(\int_{Q_\tau ^k } \|\vec g(x)\|^{p}\omega(x) \mathrm {d} \mu(x)  \bigg)^{1/p}\bigg\}_{k \in\mathbb{Z}, \tau \in I_k}\bigg\|_{\ell^{r}} \\ 
		& = \sup_{ \| \{ \xi_{k,\tau}\}_{ k\in \mathbb Z, \tau \in I_k} \|_{ \ell ^{r'}} \le 1    } 	\sum_{ (k,\tau ) \in \Omega_N  } \xi_{k,\tau} \omega (Q_\tau ^k  ) ^{1/t-1/p} \| \vec g \|_{L^p (\omega ,Q_\tau ^k ,\ell^q )}    \\
		& \le \|J\|_{ (  \H _{p', \omega ' }^{t',  r' } (\ell^{q'}) )^\ast   } <\infty ,
	\end{align*}
	which further implies that $ \vec g \in M_{p,\omega}^{t,r} (\ell^q) $. By (i), we obtain $\| \vec  g\|_{ M_{p,\omega}^{t,r} (\ell^q ) } = \|J\|_{ (  \H _{p', \omega ' }^{t',  r' } (\ell^{q'}) )^\ast   }  $.
	
	Step 3. In this step, we show that $ J  = J_{\vec g}  \in (  \H _{p', \omega ' }^{t',  r' } (\ell^{q'}) )^\ast   $. Let $\vec f \in   \H _{p', \omega ' }^{t',  r' } (\ell^{q'})  $. Then there exist  a sequence $\{ \lambda_{k,\tau } \}_{ k \in \mathbb Z,  \tau \in I_k }  \in \ell^{r'}$ and a sequence $\{\vec b_{k,\tau} \}_{ k \in \mathbb Z,  \tau \in I_k } $ of $(p',t', \omega ', \ell^{q'})$-blocks  such that for $\mu$-almost every $x \in X$, $\vec f (x) =  \sum_{k \in \mathbb Z} \sum_{\tau \in I_k}  \lambda_{k,\tau } \vec b_{k,\tau }$ and $ \| \{ \lambda_{k,\tau } \}_{ k \in \mathbb Z,  \tau \in I_k }  \|_{ \ell^{r'} } \le (1+\epsilon)  \|\vec f\|_{ \H _{p', \omega ' }^{t',  r' } (\ell^{q' }) } $.
	For each  $N\in \mathbb N$, let $\vec f_N :=  \sum_{ (k,\tau) \in \Omega_N } \lambda_{k,\tau } \vec b_{k,\tau} $. 
	Since $ r' \in [1,\infty)$, we get
	\begin{equation} \label{f_N to f in H}
		\lim_{N \to \infty}\| \vec f-  \vec f_N \|_{\H _{p', \omega ' }^{t',  r' } (\ell^{q' }) } \le \lim_{N \to \infty} \left\| \{ \lambda_{k,\tau }  \}_{ (k,\tau ) \in  \complement \Omega_N  } \right\|_{\ell^{r'} } =0.
	\end{equation}
	Hence 
	\begin{equation*}
		\left| \lim_{N \to \infty} \int_X \sum_{j \in \mathbb Z}  (f_j-f_N^j ) (x) g_j (x) \d \mu (x)  \right| \le \lim_{N \to \infty}\| \vec f- \vec f_N \|_{\H _{p', \omega ' }^{t',  r' } (\ell^{q'}) } \|\vec g\|_{M_{p,\omega}^{t,r} (\ell^q ) } =0.
	\end{equation*}
	By this, (\ref{f_N to f in H}) and (\ref{J f Q}), we conclude 
	\begin{align*}
		J (\vec f) = \lim_{N \to \infty} J (\vec f_N) &=  \lim_{N \to \infty} \sum_{  (k,\tau) \in \Omega_N  } \lambda_{k,\tau } J ( \vec  b_{k,\tau} ) \\
		& =  \lim_{N \to \infty} \sum_{  (k,\tau) \in \Omega_N  } \lambda_{k,\tau }  \int_X   \sum_{j \in \mathbb Z}  b_{k,\tau}^j  g_j (x) \d \mu (x) \\
		& = \lim_{N \to \infty} \int_X \sum_{j \in \mathbb Z}  f_N^j (x) g _j (x) \d \mu (x) \\ 
		& = \int_X \sum_{j \in \mathbb Z} f_j (x) g_j (x) \d \mu (x)  = J_{\vec g} ( \vec f).
	\end{align*}
	Step 4. In this step, we prove the uniqueness of $\vec g$. Assume that $\vec { \tilde g} $ satisfies all the results in  the above three steps. Then by result in Step 3, we have that, for any $\vec f \in \H _{p', \omega ' }^{t',  r' } (\ell^{q' }) $,
	\begin{equation*}
		\int_X \sum_{j \in \mathbb Z} f_j (x) g_j (x) \d \mu  (x) = J (\vec f) = \int_X \sum_{j \in \mathbb Z} f_j (x) \tilde g_j (x) \d \mu  (x).
	\end{equation*}
Now fix $j \in \mathbb Z$.
	Note that for any given $Q \in \D$, 
	\begin{equation*}
		\{   \ldots, 0,  \overline{\sgn (g_j -\tilde g_j ) \chi_Q } , 0 ,\ldots   \}  \in  \H _{p', \omega ' }^{t',  r' } (\ell^{q' }) .
	\end{equation*}
	 Then we get
	\begin{equation*}
		\int_Q \overline{\sgn (g_j -\tilde g_j )  } (g_j  (x) - \tilde g_j (x) ) \d \mu (x) = \int_Q |g_j (x) - \tilde g_j (x) |\d \mu  (x) =0 .
	\end{equation*}
	By the arbitrariness of $Q \in \D$, $  g_j = \tilde g _j $ $\mu$-almost everywhere on $X$ and $\tilde g_j = g_j $ in $M_{p,\omega}^{t,r} $. 
	By arbitrariness  of $j \in \mathbb Z$, $ \vec g =\vec { \tilde g}  $ $\mu$-almost everywhere on $X$ and $\vec { \tilde g}  = \vec g$ in $M_{p,\omega}^{t,r} $. 
	
	Combining the above four steps, we finish the proof of (ii). Hence the proof of Theorem \ref{predual vector} is complete.
\end{proof}

Let $ \vec f = (\ldots, 0,f_0, 0, \ldots ) $ and $ \vec g = (\ldots, 0,g_0, 0, \ldots ) $.
Then we can obtain the following scalar version of Theorem \ref{predual vector}.

\begin{theorem}\label{predual scalar}
	 Let $\omega \in A_p$ for $p \in (1,\infty)$. Let $\beta_\omega : =  1 - (1- \alpha_0 ) ^{ p }  / [\omega ]_{A_  {p} }$ where $\alpha_0$  is the reverse doubling  constant of $X$. 
	Let $ 1< p< t < \min\{ r, \log_2 ( \beta_\omega ^{-1} )  r  /n \}  <\infty$ or let $ 1 <p \le t <r =\infty $.
	Then the dual of $  \H _{p', \omega '  }^{t',  r' }  $ is $ M_{p,\omega}^{t,r} $ in the following sense:
	
	(i) for any $ g \in M_{p,\omega}^{t,r}$, the linear functional $J_{ g} $, defined by setting, for any $ f \in \H _{p', \omega '  }^{t',  r' } $, $J_{ g} ( f) := \int_X   f (x) g (x) \d \mu  (x) $, is bounded on $ \H _{p', \omega '  }^{t',  r' } $;
	
	(ii) for any $J \in \left(  \H _{p', \omega '  }^{t',  r' }  \right)^\ast$, there exists a unique $ g \in M_{p,\omega}^{t,r}$ such that $J=J_{ g}$ in $\left(  \H _{p', \omega '  }^{t',  r' }  \right)^\ast$; moreover,  $\|g\|_ { M_{p,\omega}^{t,r} }  = \| J\|_{ \left(  \H _{p', \omega '  }^{t',  r' }  \right)^\ast }$. 
\end{theorem}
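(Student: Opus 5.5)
Theorem \ref{predual scalar} is the scalar specialization of Theorem \ref{predual vector}, and I would prove it by embedding scalar functions into a single coordinate. To a scalar $f$ associate $\vec f=(\ldots,0,f,0,\ldots)$, with $f$ in the $0$-th slot, and similarly $\vec g$ for $g$. Then $\|\vec f(x)\|_{\ell^{q'}}=|f(x)|$ and $\|\vec g(x)\|_{\ell^q}=|g(x)|$ pointwise, for any $1<q<\infty$; I would fix $q=2$. Hence $b$ is a $(p',t',\omega')$-block supported in $Q$ iff $\vec b$ is a $(p',t',\omega',\ell^{q'})$-block supported in $Q$, and $\|\vec g\|_{M^{t,r}_{p,\omega}(\ell^q)}=\|g\|_{M^{t,r}_{p,\omega}}$.

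\textbf{Step 1: the embedding is isometric.} If $f=\sum\lambda_{k,\tau}b_{k,\tau}$ is a block decomposition, then $\vec f=\sum\lambda_{k,\tau}\vec b_{k,\tau}$ is admissible, so $\|\vec f\|_{\H^{t',r'}_{p',\omega'}(\ell^{q'})}\le\|f\|_{\H^{t',r'}_{p',\omega'}}$. Conversely, given a vector decomposition of $\vec f$, project onto the $0$-th coordinate. The functions $b^0_{k,\tau}$ satisfy $|b^0_{k,\tau}|\le\|\vec b_{k,\tau}\|_{\ell^{q'}}$, so they are scalar blocks with the same support. Since the vector series converges a.e. in $\ell^{q'}$ by the convergence theorem, the $0$-th coordinate series equals $f$ a.e. This gives the reverse inequality.

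\textbf{Step 2: part (i).} For $g\in M^{t,r}_{p,\omega}$ we have $J_g(f)=J_{\vec g}(\vec f)$. Theorem \ref{predual vector}(i) then gives $|J_g(f)|\le\|g\|_{M^{t,r}_{p,\omega}}\|f\|_{\H^{t',r'}_{p',\omega'}}$.

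\textbf{Step 3: existence in part (ii).} Given $J\in(\H^{t',r'}_{p',\omega'})^\ast$, define $\vec J(\vec f):=J(f_0)$ on $\H^{t',r'}_{p',\omega'}(\ell^{q'})$. Projection onto the $0$-th coordinate is norm-nonincreasing by the argument of Step 1, so $\vec J$ is bounded with $\|\vec J\|\le\|J\|$. Theorem \ref{predual vector}(ii) yields $\vec g\in M^{t,r}_{p,\omega}(\ell^q)$ with $\vec J=J_{\vec g}$. Testing on vectors supported in the $0$-th slot gives $J(f)=\int_X f g_0\,\d\mu$, so I set $g:=g_0$. Then $\|g\|_{M^{t,r}_{p,\omega}}\le\|\vec g\|_{M^{t,r}_{p,\omega}(\ell^q)}=\|\vec J\|\le\|J\|$. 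Combined with (i) this gives equality of norms.

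\textbf{Step 4: uniqueness.} If $J_g=J_{\tilde g}$, test against $\overline{\sgn(g-\tilde g)}\chi_Q$ for each $Q\in\D$. This function is a multiple of a block by Lemma \ref{nontrival}-type estimates, or directly because $\omega'$ is locally integrable. It follows that $\int_Q|g-\tilde g|\,\d\mu=0$ for every $Q$, hence $g=\tilde g$ a.e.

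\textbf{Main obstacle.} The only delicate point is the projection argument in Steps 1 and 3: one must know that the $0$-th coordinate of an a.e.-convergent $\ell^{q'}$-valued block series is the a.e. sum of the coordinate series. This follows from the convergence theorem proved above. Alternatively, one can redo the four steps of the proof of Theorem \ref{predual vector} verbatim in the scalar setting, using the classical duality $(L^{p'}(\omega'))^\ast=L^p(\omega)$ in place of Lemma \ref{dual Lp ell q}, and taking $b_{k,\tau}=\omega(Q^k_\tau)^{1/t-1/p}\|g\|_{L^p(\omega,Q^k_\tau)}^{1-p}|g|^{p-1}\omega\chi_{Q^k_\tau}$ in Step 2.
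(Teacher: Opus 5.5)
Your proposal is correct and follows the paper's route. The paper obtains Theorem \ref{predual scalar} by placing $f$ and $g$ in a single coordinate, $\vec f=(\ldots,0,f_0,0,\ldots)$ and $\vec g=(\ldots,0,g_0,0,\ldots)$, and invoking Theorem \ref{predual vector}; you additionally make explicit what the paper leaves implicit, namely that this embedding is isometric and that projection onto one coordinate is norm-nonincreasing on the block spaces, which is exactly what transfers the bound in (i) and the representation and norm identity in (ii).
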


Next we study the properties of block space, which will be frequently used.
First we have the lattice property of block spaces $\H _{p', \omega '}^{t',  r' }   $.
\begin{lemma}
	Let $1 < p \le t \le r \le  \infty $.   Let $\omega $ and $\omega' := \omega^{-1/ (p-1)} $ be  weights.
	Then, a function $f$ belongs to $\H _{p', \omega ' }^{t',  r' }$ if and only if there exists $g \in \H _{p', \omega ' }^{t',  r' }$  such that $ |f(x)|\le g (x) $ for $\mu$ a.e. $x\in X$.
\end{lemma}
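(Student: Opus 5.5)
The ``only if'' direction is trivial: take $g:=|f|$. It remains to check that $|f|\in\H _{p', \omega ' }^{t',  r' }$. If $f=\sum_{k,\tau}\lambda_{k,\tau}b_{k,\tau}$ is a block decomposition, then $|f| = \sum_{k,\tau}\lambda_{k,\tau}\,(\overline{\sgn f})\, b_{k,\tau}$ $\mu$-a.e. Each function $(\overline{\sgn f})\, b_{k,\tau}$ is supported in $Q^k_\tau$ and satisfies $|(\overline{\sgn f})\, b_{k,\tau}|\le |b_{k,\tau}|$, so it is again a $(p',t',\omega')$-block. Hence $|f|\in \H _{p', \omega ' }^{t',  r' }$ with $\||f|\|\le \|f\|$.

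For the ``if'' direction, let $g\in \H _{p', \omega ' }^{t',  r' }$ with $|f|\le g$ a.e. Fix $\epsilon>0$ and choose a decomposition $g=\sum_{k,\tau}\lambda_{k,\tau}b_{k,\tau}$ with $\|\{\lambda_{k,\tau}\}\|_{\ell^{r'}}\le (1+\epsilon)\|g\|_{\H _{p', \omega ' }^{t',  r' }}$. Define the bounded multiplier
\[
h(x):=\frac{f(x)}{g(x)} \quad \text{if } g(x)\neq 0,\qquad h(x):=0 \quad \text{otherwise}.
\]
Then $|h|\le 1$, and $f=hg$ a.e., because $g(x)=0$ forces $f(x)=0$. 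Multiplying the series by $h$ pointwise gives
\[
f=\sum_{k,\tau}\lambda_{k,\tau}\,(h\,b_{k,\tau}) \quad \mu\text{-a.e.}
\]
This is legitimate because the series for $g$ converges a.e. (by the convergence theorem above, or simply by the definition requiring a.e. equality of the sum), and multiplying a convergent numerical series by the fixed number $h(x)$ preserves convergence. Each $h\,b_{k,\tau}$ is supported in $Q^k_\tau$ and satisfies
\[
\|h\,b_{k,\tau}\|_{L^{p'}(\omega')}\le \|b_{k,\tau}\|_{L^{p'}(\omega')}\le \omega(Q^k_\tau)^{1/t-1/p},
\]
so it is a $(p',t',\omega')$-block. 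Therefore $f\in \H _{p', \omega ' }^{t',  r' }$ and $\|f\|\le(1+\epsilon)\|g\|$; letting $\epsilon\to0$ gives $\|f\|\le \|g\|$.

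The only delicate point is measurability of $h$ and the a.e. identity $f=hg$ on the set $\{g=0\}$. Both follow directly from $|f|\le g$ a.e. and the measurability of $f$ and $g$, so no genuine obstacle is expected. The argument also yields the norm monotonicity $\|f\|_{\H _{p', \omega ' }^{t',  r' }}\le \|g\|_{\H _{p', \omega ' }^{t',  r' }}$, which is presumably what the later sections use.
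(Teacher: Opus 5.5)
Your proof is correct. The ``if'' direction is the same as the paper's: both multiply the block decomposition of $g$ pointwise by the bounded factor $f/g$, so each $\lambda_{k,\tau}$ multiplies a function dominated by $|b_{k,\tau}|$, which is again a block. Your convention $h=0$ on $\{g=0\}$, together with $f=0$ a.e.\ there, is a cleaner way of writing the paper's identity $\chi_{\{g\neq 0\}}=\sum_{k,\tau}\lambda_{k,\tau}b_{k,\tau}g^{-1}$.

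The ``only if'' direction differs slightly. The paper takes $g:=\sum_{k,\tau}|\lambda_{k,\tau}|\,|b_{k,\tau}|$. You instead take $g:=|f|$ and show $|f|\in\H_{p',\omega'}^{t',r'}$ by multiplying the decomposition of $f$ by $\overline{\sgn f}$. Your choice is the more robust one. The paper's $g$ is finite a.e.\ exactly when the series for $f$ converges absolutely a.e. The paper establishes this, through its convergence theorem, only for $\omega\in A_\infty$ and a restricted range of $t,r$. The lemma, however, is stated for arbitrary weights and all $1<p\le t\le r\le\infty$. Multiplying an a.e.\ convergent series by the fixed number $\overline{\sgn f(x)}$ needs no such hypothesis.

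Your version also records the norm bounds $\||f|\|\le\|f\|$ and $\|f\|\le\|g\|$. Together these give $\|f\|_{\H_{p',\omega'}^{t',r'}}=\||f|\|_{\H_{p',\omega'}^{t',r'}}$, which the paper leaves implicit.
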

\begin{proof}
	First suppose that $f \in \H _{p', \omega ' }^{t',  r' }$. Then  decompose  $f$ as $f =\sum_{k \in \mathbb Z} \sum_{\tau \in I_k}  \lambda_{k,\tau }   b_{k,\tau } $ where $ \{ \lambda_{k,\tau } \}_{ k \in \mathbb Z,  \tau \in I_k } \in   \ell^{r'}  $  and $b_{k,\tau } $ is a $(p',t', \omega ')$-block. Let $g = \sum_{k \in \mathbb Z} \sum_{\tau \in I_k}  |\lambda_{k,\tau }|   |b_{k,\tau }|$. Then $|f (x)|\le g (x)  $ and $g \in  \H _{p', \omega ' }^{t',  r' } $.

	Now suppose that there exists $g \in \H _{p', \omega ' }^{t',  r' }$  such that $ |f(x)| \le g (x) $ for $\mu$ a.e. $x\in X$. Decompose $g$ as $g =\sum_{k \in \mathbb Z} \sum_{\tau \in I_k}  \lambda_{k,\tau }   b_{k,\tau } $ where $ \{ \lambda_{k,\tau } \}_{ k \in \mathbb Z,  \tau \in I_k } \in   \ell^{r'}  $  and $b_{k,\tau } $ is a $(p',t', \omega ')$-block. Then we see that 
	\begin{equation*}
		\chi_{\{  y \in X : g (y) \neq 0\} } (x) = \sum_{k \in \mathbb Z} \sum_{\tau \in I_k}  \lambda_{k,\tau }   b_{k,\tau } (x)  g(x)^{-1} ,
	\end{equation*}
	and, hence $ f (x) = \sum_{k \in \mathbb Z} \sum_{\tau \in I_k}   \lambda_{k,\tau }  \frac{f(x)}{g(x)}  b_{k,\tau } (x) $ for $\mu$-almost all $x\in X$. Since $\frac{|f(x)|}{g(x)} \le 1  $ $\mu$ a.e. $x\in X$, the function $(f/g) b_{k,\tau } $ is a $(p',t', \omega ')$-block. This proves the lemma.
\end{proof}

\begin{example} \label{exm ploy}
	Let $\omega \in A_p $ for $p \in (1,\infty)$. Let  $\beta_\omega : =  1 - (1- \alpha_0 ) ^p / [\omega ]_{A_p}$ where $\alpha_0$  is the reverse doubling  constant of $X$. 
	Let $ 1< p< t < \min\{ r, \log_2 ( \beta_\omega ^{-1} )  r  /n \}  <\infty$ or let $ 1 <p \le t <r =\infty $.	
	If $m > n + (\log_2 \beta_\omega  )  /t$, then 
	$\| ( 1+ \rho ( \cdot,x_0))^{-m} \|_{\H _{p', \omega '  }^{t',  r' }  } < \infty$. 
	Hence $\mathcal S \subset \H _{p', \omega '  }^{t',  r' } $.
\end{example}
\begin{proof}
	We consider balls since each ball can be covered by finite cubes.
	Let $ B_{ 2^\ell } : = B (x_0, 2^\ell ) $ with $\ell \in \mathbb \mathbb N_+$.
	\begin{align*}
		( 1+ \rho ( \cdot,x_0))^{-m}  =  ( 1+ \rho ( \cdot,x_0))^{-m} \chi_{B_{2} }  +  \sum_{ \ell =1 } ( 1+ \rho ( \cdot,x_0))^{-m}  \chi_{B_{ 2^{\ell+1}   } \backslash B_{ 2^\ell }   } .
	\end{align*}
	We only consider the second sum. Since $\omega \in A_p$, there exists $\beta_\omega : =  1 - (1- \alpha_0 ) ^p / [\omega ]_{A_p}  \in (0,1)$ such that for $ k\in \mathbb N_+ $,
	\begin{equation*}
		\omega (B_ {2 ^{\ell} }  )  \lesssim \beta_\omega ^k \omega ( B_{2 ^{\ell+k}  } ) .
	\end{equation*} 
	Then by (\ref{double}),
	\begin{align*}
		\| ( 1+ \rho ( \cdot,x_0))^{-m} \chi_{B_{ 2^{\ell+1}   } \backslash B_{ 2^\ell }   }  \|_{ L^{p'} (\omega ' )  } & = \left( \int_{ B_{ 2^{\ell+1}   } \backslash B_{ 2^\ell }  }   ( 1+ \rho ( y,x_0))^{-m p'}  \omega^{-1/ (p-1)}  \d \mu (y) \right)^{1/p'} \\
		& \lesssim  2^{-m\ell}  \left( \frac{V ( B_ {2^{\ell +1} } ) }{V ( B_{2^{\ell +1} } ) }  \int_{  B_{2^{\ell +1} }}\omega^{-1/ (p-1)}  \d \mu (y)\right)^{1/p'}  \\
		& \lesssim   2^{-m\ell} V ( B_{2^{\ell +1} } ) ^{1/p'} \Big(   \frac{1}{ V ( B_{2^{\ell +1} } ) } \int_{B_{2^{\ell +1} } } \omega(x) \mathrm {d} \mu(x)    \Big) ^{-1/p} \\
		& = 2^{-m\ell}  V ( B_{2^{\ell +1} } )  \omega (  B_{2 ^{\ell+1} } ) ^{ -1/p } \\
		& =2^{-m\ell}  V ( B_{2^{\ell +1} } )   \omega (  B_{2 ^{\ell+1} } ) ^{ -1/t }  \omega (  B_{2 ^{\ell+1} } ) ^{1/t -1/p }  \\
		&\lesssim 2^{-m\ell} 2^{\ell n} V (B_2) \beta_\omega ^{\ell /t}  \omega (  B_{2} ) ^{ -1/t }  \omega (  B_{2 ^{\ell+1} } ) ^{1/t -1/p }  \\
		& \lesssim   2^{-m\ell} 2^{\ell n} \beta_\omega ^{\ell /t}  \omega (  B_{2 ^{\ell+1} } ) ^{1/t -1/p } .
	\end{align*}
	Hence $ c 2^{m\ell} 2^{ - \ell n} \beta_\omega ^{ - \ell /t}  ( 1+ \rho ( \cdot,x_0))^{-m} \chi_{B_{ 2^{\ell+1}   } \backslash B_{ 2^\ell }   } $  is a  $(p' ,t' ,\omega ')$-block.
	Let $m > n + (\log_2 \beta_\omega  )  /t$ and we obtain
	\begin{equation*}
		\| ( 1+ \rho ( \cdot,x_0))^{-m} \|_{\H _{p', \omega '  }^{t',  r' }  } \lesssim  \left(\sum_{\ell =0}^\infty   \left|2^{-m\ell} 2^{\ell n} \beta_\omega ^{\ell /t} \right|^{r'}   \right)^{1/r'}  <\infty .
	\end{equation*}
	By this, we obtain $\mathcal S \subset \H _{p', \omega '  }^{t',  r' } $.
\end{proof}

Using Theorems \ref{predual vector}  and \ref{predual scalar}, we show the block spaces are Banach spaces. 

\begin{theorem} \label{block space is Banach}
	Let $1 <q <\infty $. Let $\omega \in A_p$ for $p \in (1,\infty)$. Let  $\beta_\omega : =  1 - (1- \alpha_0 ) ^p / [\omega ]_{A_p}$ where $\alpha_0$  is the reverse doubling  constant of $X$. 
	Let $ 1< p< t < \min\{ r, \log_2 ( \beta_\omega ^{-1} )  r  /n \}  <\infty$ or let $ 1 <p \le t <r =\infty $.
	Then $ \H _{p', \omega '  }^{t',  r' } (\ell^{q'}) $ and $ \H _{p', \omega '  }^{t',  r' }  $ are  Banach spaces. 
\end{theorem}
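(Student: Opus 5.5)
We will treat $\H _{p', \omega '  }^{t',  r' } (\ell^{q'})$; the scalar space $\H _{p', \omega '  }^{t',  r' }$ is handled the same way, using Theorem \ref{predual scalar} in place of Theorem \ref{predual vector}. The plan has three steps. First check that the infimum in the definition is a norm. Then prove completeness through the standard criterion: a normed space is complete if and only if every absolutely convergent series converges. Theorem \ref{predual vector} will supply definiteness of the norm and will identify the limit.

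\emph{Norm properties.} Homogeneity is immediate, since scaling every coefficient $\lambda_{k,\tau}$ scales the $\ell^{r'}$ norm. For the triangle inequality, take near-optimal decompositions of $\vec f$ and $\vec h$. Their sum is again a decomposition of $\vec f+\vec h$, except that each cube $Q^k_\tau$ now carries two blocks, $\lambda_{k,\tau}\vec b_{k,\tau}+\mu_{k,\tau}\vec c_{k,\tau}$. Write this as $\nu_{k,\tau}\vec d_{k,\tau}$ with $\nu_{k,\tau}=|\lambda_{k,\tau}|+|\mu_{k,\tau}|$ and $\vec d_{k,\tau}=(\lambda_{k,\tau}\vec b_{k,\tau}+\mu_{k,\tau}\vec c_{k,\tau})/\nu_{k,\tau}$. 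By Minkowski's inequality in $L^{p'}(\omega',\ell^{q'})$, $\vec d_{k,\tau}$ is again a block supported in $Q^k_\tau$. Then Minkowski in $\ell^{r'}$ (recall $r'\ge1$) gives $\|\{\nu_{k,\tau}\}\|_{\ell^{r'}}\le\|\{\lambda_{k,\tau}\}\|_{\ell^{r'}}+\|\{\mu_{k,\tau}\}\|_{\ell^{r'}}$. All series converge a.e.\ by the convergence theorem preceding Lemma \ref{dual banach}.

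For definiteness, suppose $\|\vec f\|=0$. By part (i) of Theorem \ref{predual vector}, every $\vec g\in M_{p,\omega}^{t,r}(\ell^q)$ satisfies $|\int_X\sum_j f_jg_j\,\d\mu|\le\|\vec f\|\,\|\vec g\|=0$. Fix $j$ and a cube $Q$, and take $\vec g=\overline{\sgn f_j}\,\chi_Q e_j$, where $e_j$ is the $j$-th unit sequence. This $\vec g$ lies in $M_{p,\omega}^{t,r}(\ell^q)$ by Lemma \ref{nontrival}, whose range of parameters is covered by ours since $A_p\subset A_{q_\omega+A}$ for a suitable $A$. The choice gives $\int_Q|f_j|\,\d\mu=0$. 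Hence $\vec f=0$ a.e.

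\emph{Completeness.} Let $\sum_m\|\vec f_m\|<\infty$. For each $m$, choose a decomposition $\vec f_m=\sum_{k,\tau}\lambda^{(m)}_{k,\tau}\vec b^{(m)}_{k,\tau}$ with $\|\lambda^{(m)}\|_{\ell^{r'}}\le\|\vec f_m\|+2^{-m}\varepsilon$. We want to regroup everything cube by cube, setting
\[
\Lambda_{k,\tau}=\sum_m|\lambda^{(m)}_{k,\tau}|,\qquad \vec B_{k,\tau}=\Lambda_{k,\tau}^{-1}\sum_m\lambda^{(m)}_{k,\tau}\vec b^{(m)}_{k,\tau}.
\]
Minkowski gives $\|\vec B_{k,\tau}\|_{L^{p'}(\omega',\ell^{q'})}\le\omega(Q^k_\tau)^{1/t-1/p}$, so $\vec B_{k,\tau}$ is a block. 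Minkowski in $\ell^{r'}$ gives $\|\Lambda\|_{\ell^{r'}}\le\sum_m\|\vec f_m\|+\varepsilon$. Set $\vec F:=\sum_{k,\tau}\Lambda_{k,\tau}\vec B_{k,\tau}$, which converges a.e.\ and in $L^1_{\rm loc}(\ell^{q'})$ by the convergence theorem. Then $\vec F\in\H$, and for the tails,
\[
\Big\|\vec F-\sum_{m\le M}\vec f_m\Big\|\le\sum_{m>M}\|\vec f_m\|+2^{-M}\varepsilon\to0 ,
\]
by the same regrouping applied to $m>M$.

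\emph{Main obstacle.} The delicate point is justifying that $\vec F=\sum_m\vec f_m$ a.e., that is, interchanging the sums over $m$ and over $(k,\tau)$. I would do this through absolute convergence. The estimate (\ref{abs integral}), applied to the nonnegative coefficients $\Lambda_{k,\tau}$ and the functions $\sum_m|\lambda^{(m)}_{k,\tau}|\,\|\vec b^{(m)}_{k,\tau}\|_{\ell^{q'}}$ (again bounded in norm by block size), shows that
\[
\int_Q\sum_m\sum_{k,\tau}|\lambda^{(m)}_{k,\tau}|\,\|\vec b^{(m)}_{k,\tau}\|_{\ell^{q'}}\,\d\mu<\infty
\]
for every cube $Q$. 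Fubini--Tonelli then permits the rearrangement a.e.

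As a fallback, one can identify the limit by duality instead. The partial sums are Cauchy, and pairing them with $\vec g\in M_{p,\omega}^{t,r}(\ell^q)$ via Theorem \ref{predual vector} determines the limit uniquely, using the same test functions $\vec g=\overline{\sgn f_j}\,\chi_Q e_j$ as in the definiteness argument.
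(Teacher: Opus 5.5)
Your proof is correct. The overall scheme matches the paper's: near-optimal decompositions plus the criterion that a normed space is complete when every absolutely summable series converges. The difference is in how you identify the limit and build the combined decomposition.

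The paper first uses the duality bound $\int_B\|\vec h\|_{\ell^{q'}}\,\d\mu\le\|\chi_B\|_{M_{p,\omega}^{t,r}}\|\vec h\|_{\H_{p',\omega'}^{t',r'}(\ell^{q'})}$, which comes from Theorem \ref{predual vector}(i) and Lemma \ref{nontrival}. With it, the paper defines $\vec f=\sum_i\vec f^{(i)}$ in $L^1_{\rm loc}(\ell^{q'})$. It then shows that finite truncations of the chosen block decompositions converge to $\vec f$ in $L^1_{\rm loc}(\ell^{q'})$, hence a.e.\ along a subsequence. This is essentially your fallback. Note that the fallback by itself only identifies a candidate limit; membership in the block space still needs your regrouping.

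You instead work pointwise. Applying (\ref{abs integral}) to the normalized scalar blocks $\Lambda_{k,\tau}^{-1}\sum_m|\lambda^{(m)}_{k,\tau}|\,\|\vec b^{(m)}_{k,\tau}\|_{\ell^{q'}}$ and using Tonelli gives a.e.\ absolute convergence of the triple series. So the sums over $m$ and $(k,\tau)$ can be interchanged without passing to a subsequence.

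What each route buys:
\begin{itemize}
\item The paper's route produces the limit independently of any decomposition, using only the pairing with $\chi_B$.
\item Your route produces an admissible decomposition with one block per cube, as the definition requires. The paper's family $\{\lambda_{i,k,\tau}\vec b_{i,k,\tau}\}$ carries one block on each cube for every $i$, so your Minkowski regrouping is exactly the step it passes over.
\item Your route also gives the explicit tail bound $\|\vec F-\sum_{m\le M}\vec f_m\|\le\sum_{m>M}\|\vec f_m\|+2^{-M}\varepsilon$. The paper states convergence in the block norm without a separate estimate.
\item You additionally verify the norm axioms that the paper calls easy.
\end{itemize}

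One small point concerns the convergence theorem you invoke. It is stated only for $p<t$, whereas the statement also allows $p=t$, $r=\infty$. In that case (\ref{abs integral}) is immediate. Since $r'=1$ and each block has $L^{p'}(\omega')$-norm at most $1$, H\"older's inequality gives $\int_Q\sum_{k,\tau}|\lambda_{k,\tau}|\,\|\vec b_{k,\tau}\|_{\ell^{q'}}\,\d\mu\le\|\lambda\|_{\ell^1}\,\omega(Q)^{1/p}$.
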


\begin{proof}
	We only  prove the completeness  since others are easy. 	
	Let $ \{\vec f ^{(i)} \}_{i\in \mathbb N_+}$  be a sequence such that for each $i \in \mathbb N_+$, $f_i \in \H _{p', \omega '  }^{t',  r' }  $, $ \sum_{i\ge 1} \| \vec f ^{(i)}  \|_{\H _{p', \omega '  }^{t',  r' } (\ell^{q'}  ) }  <\infty  $.
	From  Theorem \ref{predual scalar},
	for any ball $B$, we have
	\begin{equation*}
		\int_{B} \sum_{i\ge 1} \| \vec   f ^{(i)} (y) \|_{ \ell^{q'} } \d \mu (y) =\sum_{i\ge 1} \int_{B}  \| \vec   f ^{(i)} (y) \|_{ \ell^{q'} }  \d \mu (y) \le \| \chi_B \|_{  M _{p,\omega}^{t,r}   }  \sum_{i \ge 1} \|  \vec  f ^{(i)} \|_{ 	\H _{p', \omega '  }^{t',  r' } (\ell^{q'}) }.
	\end{equation*}
	From $\|  \sum_{i \ge 1}  f_i \|_{ \ell^{q'} }   \le \sum_{i \ge 1} \| f_i  \|_{\ell^{q'} }$,
	we obtain  $\vec f = \sum_{i \ge 1} \vec f ^{ (i)}  $ is a well defined, $\ell^{q'} $-valued $\mu$-measurable function and  $\vec f\in  L^1_{\operatorname{loc}} ( \ell^{q'} ) $. So we only need to show that  $\vec f =\sum_{i \ge 1} \vec f ^{ (i)}$ belongs to $\H_{p', \omega ' }^{t',r'}( \ell^{q'}  ) $.
	Fix $\epsilon > 0$. There exists a positive integer $N_0$ such that for all $N \ge N_0$, 
	\begin{equation*}
		\sum_{i \ge N } \|    \vec f^{(i)} \|_{ 	\H_{p', \omega ' }^{t',r'}(  \ell^{q'}  )  }  <\epsilon.
	\end{equation*}
	For this $\epsilon > 0$, there exists a decomposition 
	\begin{equation*}
		\vec f^{ (i)}  =   \sum_{k \in \mathbb Z, \tau \in I_k} \lambda_{i,k,\tau} \vec b_{i,k,\tau},
	\end{equation*}
	where each $\vec b_{i,k,\tau}$ is a  $(p',t',  \omega', \ell^{q'}  )$-block supported on $Q^k_\tau$ and 
	\begin{equation*}
		\left(    \sum_{ k \in \mathbb Z, \tau \in I_k  }   | \lambda_{i,k,\tau}| ^{r'} \right)^{1/r'}  \le (1+\epsilon)  \|    \vec f^{(i)} \|_{ 	\H_{p', \omega ' }^{t',r'}( \ell^{q'}  )  }.
	\end{equation*}
	Furthermore, for any $ 1\le i \le N_0 $, there exists an index set $ M _i  \subset Y := \{(j,\tau) : j\in \mathbb Z, \tau \in I_k  \} $  such that 
	\begin{equation*}
		\bigg\|  \vec f^{(i)}  - \sum_{(k,\tau)\in M_i} \lambda_{i,k,\tau} \vec b_{i,k,\tau}  \bigg\|_{ 	\H_{p', \omega ' }^{t',r'}( \ell^{q'}  )  } \le \bigg(  \sum_{(k,\tau)\in   Y  \backslash  M_i} |\lambda_{i,k,\tau}|^{r'} \bigg) ^{1/r'} \le 2^{-i} \epsilon.
	\end{equation*}
	Therefore, for all ball $B$, 
	\begin{align*}
		&	\int_B \bigg\|\vec  f(y)  -  \sum_{i=1}^{N_0}  \sum_{(k,\tau)\in M_i} \lambda_{i,k,\tau} \vec b_{i,k,\tau} (y) \bigg\|_{ \ell^{q'}  }  \d \mu (y) \\
		& \le \int_B  \bigg\|\vec  f(y) -\sum_{i=1}^{N_0} \vec f^{(i)} (y)  \bigg\|_{ \ell^{q'}  } \d \mu (y) + \int_B  \bigg\| \sum_{i=1}^{N_0} \vec f^{(i)} (y) - \sum_{i=1}^{N_0}  \sum_{(k,\tau)\in M_i} \lambda_{i,k,\tau} \vec b_{i,k,\tau} (y)  \bigg\|_{ \ell^{q'}  } \d \mu (y)  \\
		& \le  \int_B  \bigg\| \sum_{i=N_0+1}^{\infty } \vec f^{(i)} (y)  \bigg\|_{ \ell^{q'}  } \d \mu (y) +  \sum_{i=1}^{N_0} \int_B  \bigg\|  \vec f^{(i)} (y) -   \sum_{(k,\tau)\in M_i} \lambda_{i,k,\tau} \vec b_{i,k,\tau} (y)  \bigg\|_{ \ell^{q'}  } \d \mu (y)  \\
		& \le \| \chi_B \|_{M_{p,\omega}^{t,r} } \bigg( \sum_{i=N_0+1}^{\infty }  \|\vec f^{(i)}\|_{ \H_{p', \omega ' }^{t',r'}( \ell^{q'}  )   }  +   \sum_{i=1}^{N_0}  \bigg\|  \vec f^{(i)}  -   \sum_{(k,\tau)\in M_i} \lambda_{i,k,\tau} \vec b_{i,k,\tau}  \bigg\|_{\H_{p', \omega ' }^{t',r'}( \ell^{q'}  )  }  \bigg) \\
		& \le \| \chi_B \|_{M_{p,\omega}^{t,r} } ( \epsilon +\sum_{i=1}^{N_0}  2^{-i} \epsilon  ) \\
		& \lesssim  \| \chi_B \|_{M_{p,\omega}^{t,r} }  \epsilon.
	\end{align*}
	As a consequence,  $\sum_{i\ge 1} \sum_{k \in \mathbb Z, \tau \in I_k} \lambda_{i,k,\tau} \vec b_{i,k,\tau}$ converges to $\vec f$ in  $L^1_{\operatorname{loc}} ( \ell^{q'}  ) $.  Hence,  $\sum_{i\ge 1} \sum_{k \in \mathbb Z, \tau \in I_k} \lambda_{i,k,\tau} \vec b_{i,k,\tau}$ converges to $\vec f$ locally in measure. Therefore, there exists a subsequence of  
	\begin{equation*}
		\left\{\sum_{i= 0}^{N} \sum_{(k,\tau)\in K \subset Y , \sharp K = M} \lambda_{i,k,\tau} \vec b_{i,k,\tau}  \right\}_{ N \in \mathbb N ,M \in \mathbb N}
	\end{equation*}
	converges to $\vec f$ a.e. 
	Recall that $\sharp K$ is the cardinal number of the set $K$.
	Furthermore, $\lambda_{i,k,\tau}$, $i\in \mathbb N_+, (k,\tau) \in Y$  satisfies
	\begin{equation*}
		\sum_{i\ge 1}	\bigg(    \sum_{ k \in \mathbb Z, \tau \in I_k  }   | \lambda_{i,k,\tau}| ^{r'} \bigg)^{1/r'}  \le \sum_{i\ge 1} (1+\epsilon)  \|    \vec f^{(i)} \|_{ 	\H_{p', \omega ' }^{t',r'}( \ell^{q'}  )  }.
	\end{equation*}
	That is, $ \sum_{i\ge  1}  \vec f^{(i)}$ converges to $\vec f$ in  $	\H_{p', \omega ' }^{t',r'}( \ell^{q'} )$. Since $\epsilon$ is arbitrary, by the definition of block spaces $\H_{p', \omega ' }^{t',r'}( \ell^{q'} )$, we obtain
	\begin{equation*}
		\|\vec f\|_{ \H_{p', \omega ' }^{t',r'}( \ell^{q'} )   }  =  	\bigg\| \sum_{i\ge  1}  \vec f^{(i)} \bigg\|_{ \H_{p', \omega ' }^{t',r'}( \ell^{q'} )   }  \le \sum_{i\ge  1} \|    \vec f^{(i)} \|_{ 	\H_{p', \omega ' }^{t',r'}( \ell^{q'} )  }.
	\end{equation*}
	Then by the well known result that a normed linear space is complete if and only if every absolutely summable sequence is summable (for example, see \cite[Theorem III.3]{RS72}), 	$\H_{p', \omega ' }^{t',r'}( \ell^{q'} ) $ is complete. The proof of scalar valued block space $ \H _{p', \omega '  }^{t',  r' } $ is similar.
\end{proof}

\section{Predual of weighted homogeneous Bourgain-Morrey Besov space} \label{Predual BM Beosv}

\begin{definition} \label{def spa block}
	Let $\mathcal D$ be the dyadic cubes in $X$ as in Remark \ref{dya cube}.
	Let $\psi$ be a partition of unity. Let $ 0< q\le \infty$, $s\in \mathbb R$ and $\omega ' \in A_\infty $. Let $1<p<t<r<\infty$ 	or $1<p\le t<r=\infty$.
	The weighted homogeneous  Besov block space $\dot{B \H}^{ - s,q ',\psi,L}_{p ' ,t ' ,r ' ,\omega ' } (X)$ is defined as follows:
	\[
	\dot{B \H}^{ - s,q ',\psi,L}_{p ' ,t ' ,r ' ,\omega ' } (X) = \{ f\in \mathcal S_\infty ' : \| f \| _{ \dot{B \H}^{ - s,q ',\psi,L}_{p ' ,t ' ,r ' ,\omega ' } (X) } <\infty  \},
	\]
	where
	\[
	\| f\|_{\dot{B \H}^{ - s,q ',\psi,L}_{p ' ,t ' ,r ' ,\omega ' } (X) } =
	\bigg(  \sum_{j\in \mathbb{Z}}  2^{-js q' }\Big\|  \psi_j \big(\sqrt{L}\big)f\Big\|^{q'} _{  \H _{p', \omega'   }^{t',  r' }  }   \bigg)^{1/q'}.
	\]
	The weighted homogeneous 	 Triebel-Lizorkin block space $\dot{F \H}^{-s,q',\psi,L}_{p',t',r',\omega '} (X)$ is defined by
	\[
	\dot{F \H}^{-s,q',\psi,L}_{p',t',r',\omega '} (X) = \{ f\in \mathcal S_\infty ' : \| f \| _{ \dot{F \H}^{-s,q',\psi,L}_{p',t',r',\omega '} (X) } <\infty  \},
	\]
	where
	\[
	\| f\|_{\dot{F \H}^{-s,q',\psi,L}_{p',t',r',\omega '} (X)} =  \bigg\|\bigg( \sum_{j\in \mathbb{Z}} 2^{-jsq'} \Big| \psi_j \big(\sqrt{L}\big)f \Big|^{q'}\bigg)^{1/q'}\bigg\|_{\H _{p', \omega'   }^{t',  r' } }   .
	\]
\end{definition}

We usually need $w\in A_\infty$ for  weighted homogeneous  Besov-Triebel-Lizorkin spaces in \cite{BX25} and $ \omega ' = \omega ^{-1 / (p-1)} \in A_\infty $ for 	the weighted homogeneous  Besov-Triebel-Lizorkin block spaces. By (vi)  of Lemma \ref{weights},  $\omega \in A_p$.

	In what follows, the symbol $\hookrightarrow$ stands for continuous embedding.
We list so-called basic embeddings. 
\begin{lemma}	
		Let $\psi$ be a partition of unity. Let $ 0< q\le \infty$, $s\in \mathbb R$ and $\omega \in A_\infty $.
	Let $1<p<t<r<\infty$ 	or $1<p\le t<r=\infty$.
	
	{\rm (i)} The space $\dot{A \H}^{ - s,q ',\psi,L}_{p ' ,t ' ,r ' ,\omega ' } (X), \mathcal A \in\{ \mathcal B, \mathcal F\}$ is monotone with $q'$, namely, if $q'_1 \le q'_2$, then 
	\begin{equation*}
		\dot{A \H}^{ - s,q '_1 ,\psi,L}_{p ' ,t ' ,r ' ,\omega ' } (X) \hookrightarrow \dot{A \H}^{ - s,q ' _2,\psi,L}_{p ' ,t ' ,r ' ,\omega ' } (X) .
	\end{equation*}
	
	{\rm (ii)} The space $\dot{A \H}^{ - s,q ',\psi,L}_{p ' ,t ' ,r ' ,\omega ' } (X), \mathcal A \in\{ \mathcal B, \mathcal F\}$ is monotone with $r'$, namely, if $r'_1 \le r'_2$, then 
	\begin{equation*}
	\dot{A \H}^{ - s,q ',\psi,L}_{p' ,t',r_1,\omega '} (X) \hookrightarrow\dot{A \H}^{-s,q',\psi,L}_{p',t',r_2,\omega'} (X).
	\end{equation*}
\end{lemma}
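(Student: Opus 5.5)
The plan is to derive both embeddings from elementary inclusions, first between sequence spaces $\ell^{a}\subset\ell^{b}$ for $a\le b$ and then between block spaces with different outer exponents, and to push these through the definitions of the quasi-norms in Definition \ref{def spa block}.

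For (i), fix $f$ in the smaller space and write $F_j := 2^{-js}\psi_j(\sqrt L)f$. In the Besov case,
\[
\|f\|_{\dot{B \H}^{-s,q'_2,\psi,L}_{p',t',r',\omega'}} = \big\|\{\|F_j\|_{\H^{t',r'}_{p',\omega'}}\}_j\big\|_{\ell^{q'_2}} \le \big\|\{\|F_j\|_{\H^{t',r'}_{p',\omega'}}\}_j\big\|_{\ell^{q'_1}},
\]
because $\|\cdot\|_{\ell^{q'_2}}\le\|\cdot\|_{\ell^{q'_1}}$ whenever $q'_1\le q'_2$. In the Triebel--Lizorkin case the same inequality holds pointwise: $\big(\sum_j|F_j(x)|^{q'_2}\big)^{1/q'_2}\le\big(\sum_j|F_j(x)|^{q'_1}\big)^{1/q'_1}$ for $\mu$-a.e.\ $x$. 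We then apply the lattice property of $\H^{t',r'}_{p',\omega'}$. The lemma preceding Example \ref{exm ploy} gives the membership statement, and its proof also gives the norm bound. Indeed, if $|h|\le g$ and $g=\sum\lambda_{k,\tau}b_{k,\tau}$ is a near-optimal decomposition, then $h=\sum\lambda_{k,\tau}(h/g)b_{k,\tau}$ is an admissible decomposition with the same coefficients. Hence $\|h\|_{\H}\le\|g\|_{\H}$, which gives the embedding.

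For (ii), with $r'_1\le r'_2$, the key step is the norm inequality $\|h\|_{\H^{t',r'_2}_{p',\omega'}}\le\|h\|_{\H^{t',r'_1}_{p',\omega'}}$. The notion of a $(p',t',\omega')$-block does not involve $r$ at all. Therefore any decomposition $h=\sum\lambda_{k,\tau}b_{k,\tau}$ that is admissible for $\H^{t',r'_1}_{p',\omega'}$ is also admissible for $\H^{t',r'_2}_{p',\omega'}$, and $\|\{\lambda_{k,\tau}\}\|_{\ell^{r'_2}}\le\|\{\lambda_{k,\tau}\}\|_{\ell^{r'_1}}$. Taking the infimum over decompositions gives the inequality. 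We then apply it with $h=\psi_j(\sqrt L)f$ for each $j$ in the Besov case, and with $h=\big(\sum_j|F_j|^{q'}\big)^{1/q'}$ in the Triebel--Lizorkin case.

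The only point needing care is the bookkeeping of exponents. The conditions on $p,t,r$ are stated for the unprimed indices, and $r_1\le r_2$ does not correspond to $r'_1\le r'_2$ (it corresponds to $r'_1\ge r'_2$). The statement is phrased in terms of $r'$, and I would prove it exactly in that form, as monotonicity in $r'$. No convergence issues arise, since the a.e.\ convergence of the series is the same in both spaces (Theorem on convergence of (\ref{block decom})). There is no serious obstacle; the lattice-norm inequality in (i) is the only step that goes slightly beyond what the earlier lemma states verbatim.
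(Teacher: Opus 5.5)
Your proof is correct and follows the paper's argument; the paper's entire proof is the one-line remark that both embeddings follow from the monotonicity of $\ell^q$-norms in $q$. You supply the details it leaves implicit: the monotonicity of the $\H^{t',r'}_{p',\omega'}$-norm under pointwise domination (for the Triebel--Lizorkin case of (i)), and the $r$-independence of $(p',t',\omega')$-blocks (for (ii)). Your appeal to the convergence theorem is unnecessary, since the a.e.\ identity of an admissible decomposition does not depend on $r'$.
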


\begin{proof}
		The properties  (i) and (ii) are  coming from the monotonicity of the $\ell^q$-norm on $q$.
\end{proof}

\begin{lemma} \label{S_infty embed}
		Let $ 0< q  ' \le  \infty$, $s\in \mathbb R$ and $\omega  \in A_{p}$ where $ p\in (1,\infty)$.
	Let  $\beta_\omega : =  1 - (1- \alpha_0 ) ^p / [\omega ]_{A_p}$ where $\alpha_0$  is the reverse doubling  constant of $X$. 	
Let $ 1< p< t < \min\{ r, \log_2 ( \beta_\omega ^{-1} )  r  /n \}  <\infty$ or let $ 1 <p \le t <r =\infty $.	
Then 	$ \mathcal S_\infty \hookrightarrow  \dot{B \H}^{ - s,q ',\psi,L}_{p ' ,t ' ,r ' ,\omega '} (X) $ and $ \mathcal S_\infty \hookrightarrow  \dot{F \H}^{ - s,q ' ,\psi,L}_{p '  ,t' ,r',\omega'} (X)$.
\end{lemma}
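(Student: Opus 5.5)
The goal is to show that for every $\phi\in\mathcal S_\infty$ there are a seminorm $\mathcal P^\ast_{m,\ell,k}$ and a constant $C$ with
\begin{equation*}
\|\phi\|_{\dot{A \H}^{ - s,q ',\psi,L}_{p ' ,t ' ,r ' ,\omega '}}\le C\,\mathcal P^\ast_{m,\ell,k}(\phi).
\end{equation*}

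\textbf{Pointwise bound for each frequency piece.} First I bound $\psi_j(\sqrt L)\phi(x)$ pointwise, treating $j\ge0$ and $j<0$ separately.

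For $j\ge 0$, write $\phi=L^{k}g_k$ with $g_k\in\mathcal S$. Then
\begin{equation*}
\psi_j(\sqrt L)\phi=2^{2jk}\,\tilde\psi_j(\sqrt L)g_k,\qquad \tilde\psi(\lambda):=\lambda^{2k}\psi(\lambda).
\end{equation*}
Since $\tilde\psi$ is compactly supported in $(0,\infty)$, this bound alone is not enough, because it grows in $j$. So I use the opposite direction: $\psi_j(\sqrt L)\phi=2^{-2jk}\,\hat\psi_j(\sqrt L)(L^k\phi)$ with $\hat\psi(\lambda):=\lambda^{-2k}\psi(\lambda)$. Here $L^k\phi\in\mathcal S$ is controlled by $\mathcal P_{m,k}(\phi)$.

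For $j<0$, use $\phi=L^k g_k$, which gives the factor $2^{2jk}$.

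In both cases, the kernel bound of Lemma \ref{kernel est} (with $W$ large), together with the decay $(1+\rho(y,x_0))^{-m}$ of the test function and the integral estimate of Lemma \ref{basic est}(i), should give
\begin{equation*}
|\psi_j(\sqrt L)\phi(x)|\lesssim 2^{-2|j|k}\,(1+2^{\min(j,0)})^{\ldots}(1+\rho(x,x_0))^{-m'}\,\mathcal P,
\end{equation*}
where $\mathcal P$ is a finite sum of the $\mathcal S_\infty$-seminorms.

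For $j<0$ the kernel is spread at scale $2^{-j}$, so the spatial decay relative to $x_0$ costs a factor of roughly $2^{-jm'}$. Concretely, I would bound $(1+2^{j}\rho(x,y))^{-W}(1+\rho(y,x_0))^{-W}\lesssim(1+2^{j}\rho(x,x_0))^{-W}\le 2^{-jW}(1+\rho(x,x_0))^{-W}$, combined with the factor $V(x,2^{-j})^{-1}\lesssim V(x_0,1)^{-1}$ up to polynomial growth. This loss is absorbed by choosing $k$ much larger than $m'$. The outcome is a bound of the form $C2^{-|j|N}(1+\rho(x,x_0))^{-m'}$ for any prescribed $N,m'$.

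\textbf{Passing to the block-space norm.} Next I take $m'>n+(\log_2\beta_\omega)/t$. By Example \ref{exm ploy}, $(1+\rho(\cdot,x_0))^{-m'}\in\H_{p',\omega'}^{t',r'}$. The lattice property of the block space then gives
\begin{equation*}
\|\psi_j(\sqrt L)\phi\|_{\H_{p',\omega'}^{t',r'}}\lesssim 2^{-|j|N}\mathcal P.
\end{equation*}

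\textbf{Besov case.} Multiply by $2^{-js}$ and take $N>|s|+1$. The resulting series in $\ell^{q'}$ converges, with the usual modification when $q'=\infty$ or $q'<1$.

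\textbf{Triebel–Lizorkin case.} Pointwise,
\begin{equation*}
\Big(\sum_j 2^{-jsq'}|\psi_j(\sqrt L)\phi|^{q'}\Big)^{1/q'}\lesssim \sum_j 2^{-js}2^{-|j|N}(1+\rho(\cdot,x_0))^{-m'}\mathcal P\lesssim (1+\rho(\cdot,x_0))^{-m'}\mathcal P.
\end{equation*}
Applying the lattice property and Example \ref{exm ploy} once more gives the claim.

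\textbf{Main obstacle.} The hard step is the pointwise estimate for $j<0$. There I must trade the spatial spreading of the kernel at scale $2^{-j}$ against the gain $2^{2jk}$ coming from $L^{-k}\phi\in\mathcal S$. This also requires controlling $V(x,2^{-j})^{-1}$ uniformly, which I would handle via the doubling bound $V(x_0,1)\lesssim(1+\rho(x,x_0))^{n}V(x,1)$ and monotonicity of $V(x,\cdot)$ in the radius.
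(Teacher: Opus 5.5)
Your proposal is correct and takes the paper's route. The paper also obtains pointwise bounds $|\psi_j(\sqrt L)\phi(x)|\lesssim 2^{-|j|N}(1+\rho(x,x_0))^{-m'}$ times finitely many $\mathcal S_\infty$-seminorms, separately for $j\ge 0$ and $j<0$, and then applies Example~\ref{exm ploy}, the lattice property and summation in $j$. The difference is that you derive these bounds yourself from $\psi_j(\sqrt L)\phi=2^{-2jk}\hat\psi_j(\sqrt L)L^k\phi$ and $\psi_j(\sqrt L)\phi=2^{2jk}\tilde\psi_j(\sqrt L)g_k$, whereas the paper quotes them from \cite{BX25} and omits the Triebel--Lizorkin case that you write out.

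One small correction: for $q'<1$, the $\ell^{q'}$ quasi-norm in your Triebel--Lizorkin display is not dominated by the $\ell^1$ sum. Summing the geometric bound $2^{-|j|(N-|s|)q'}$ directly in $\ell^{q'}$ still gives the same conclusion.
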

\begin{proof}
	Let $f \in  \mathcal S_\infty$. 	Recall that since $f \in \mathcal S _\infty$, for each $k\in \mathbb N$, there exists $g_k \in \mathcal S$ such that $f = L^k g_k$. For $m>0$ and $\ell , k \in \mathbb N$, we have
	\begin{equation*} 
		\mathcal P _{m, \ell ,k} ^* (f ) = \sup_{x\in X} (1 + \rho (x, x_0)) ^m  \big|  L^\ell g_k (x)    \big|.
	\end{equation*}
First consider the Besov block space
	\begin{align*}
		\| f\|_{\dot{B \H}^{ - s,q ',\psi,L}_{p ' ,t ' ,r ' ,\omega ' } (X) } & =
		\bigg(  \sum_{j\in \mathbb{Z}}  2^{-js q' }\Big\|  \psi_j \big(\sqrt{L}\big)f\Big\|^{q'} _{  \H _{p', \omega'   }^{t',  r' }  }   \bigg)^{1/q'}.
	\end{align*}
Case $j \ge0$. From \cite[p. 903]{BX25},
we have
\begin{equation*}
	|  \psi_j \big(\sqrt{L}\big)f |\lesssim 2^{- j( 2m -n)}  \mathcal P _{m,m,k}^* (f)  ( 1+ \rho (x,x_0) ) ^{-m + \tilde n +n} .
\end{equation*}
Let $  (2m -n > |s|) $ and $ m  -\tilde n - n  >  n + (\log_2 \beta_\omega  )  /t  $ where  $\beta_\omega : =  1 - (1- \alpha_0 ) ^p / [\omega ]_{A_p}$.  By Example \ref{exm ploy}, we have
\begin{equation} \label{j ge 0 S}
	\bigg(  \sum_{j =0}^\infty  2^{-js q' }\Big\|  \psi_j \big(\sqrt{L}\big)f\Big\|^{q'} _{  \H _{p', \omega'   }^{t',  r' }  }   \bigg)^{1/q'} \lesssim  \mathcal P _{m,m,m}^* (f) .
\end{equation}
Case $ j <0$. From \cite[p. 905]{BX25},
\begin{equation*}
	\big| 	\psi_j \big(\sqrt{L}\big) \phi (x) \big| \lesssim  2^{j (m +\tilde n +n) }  \mathcal P _{m,m,m}^* (f) ( 1+ \rho ( x,x_0) )^ {-m +\tilde n +n} .
\end{equation*}
Let $ m +\tilde n +n > |s| $ and $ m -\tilde n - n  > n + (\log_2 \beta_\omega  )  /t $. By Example \ref{exm ploy}, we have
\begin{equation}\label{j < 0 S}
	\bigg(  \sum_{j =-\infty}^{-1} 2^{-js q' }\Big\|  \psi_j \big(\sqrt{L}\big)f\Big\|^{q'} _{  \H _{p', \omega'   }^{t',  r' }  }   \bigg)^{1/q'} \lesssim  \mathcal P _{m,m,m}^* (f) .
\end{equation}
From (\ref{j ge 0 S}), (\ref{j < 0 S}),  we show
\begin{equation*}
	\| f\|_{\dot{B \H}^{ - s,q ',\psi,L}_{p ' ,t ' ,r ' ,\omega ' } (X) } \lesssim \mathcal P _{m,m,m}^* (f) 
\end{equation*}
and  $\mathcal S_\infty \hookrightarrow \dot{B \H}^{ - s,q ',\psi,L}_{p ' ,t ' ,r ' ,\omega '} (X)$.

The proof of  $\mathcal S_\infty \hookrightarrow  \dot{F \H}^{ - s,q ',\psi,L}_{p ' ,t ' ,r ' ,\omega '} (X) $  is similar and we omit it here.
\end{proof}

\begin{proposition} \label{pro ell H = ell BM}
	 Let $\omega \in A_p$ for $p \in (1,\infty)$. Let $\beta_\omega : =  1 - (1- \alpha_0 ) ^{ p }  / [\omega ]_{A_  {p} }$ where $\alpha_0$  is the reverse doubling  constant of $X$. 
	Let $ 1< p< t < \min\{ r, \log_2 ( \beta_\omega ^{-1} )  r  /n \}  <\infty$ or let $ 1 <p \le t <r =\infty $.
	Let 
	\begin{equation*}
		q = \begin{cases}
			\frac{q'}{q' -1}, & {\rm if} \;  q' \in (1,\infty) , \\
			\infty, & {\rm if} \;  q' \in (0,1] .
		\end{cases} 
	\end{equation*}	
	 Then
	\begin{equation*}
	 \left( \ell^{q'} ( \H _{p', \omega '  }^{t',  r' } )  \right)^\ast =  \ell^q ( M_{p,\omega}^{t,r} ) .
	\end{equation*}
That is $\vec  g \in \left( \ell^{q'} ( \H _{p', \omega '  }^{t',  r' } )  \right)^\ast $ if and only if it can be represented uniquely as 
\begin{equation*}
\vec  g(\vec  f)  = \sum_{j \in \mathbb Z} \int_X g_j (x) f_j (x) \d \mu (x)
\end{equation*}
for every $\vec  f \in  \ell^{q'} ( \H _{p', \omega '  }^{t',  r' } ) $ where $  \{ g_j\}_{j \in \mathbb Z} \in   \ell^q ( M_{p,\omega}^{t,r} )$  and $ \| \vec  g\|_{ \left( \ell^{q'} ( \H _{p', \omega '  }^{t',  r' } )  \right)^\ast }  = \| \{ g_j\}  \|_{\ell^q ( M_{p,\omega}^{t,r} ) }$.

\end{proposition}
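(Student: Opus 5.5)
The plan is to combine the scalar duality of Theorem \ref{predual scalar} with the standard duality of vector-valued sequence spaces $(\ell^{q'}(Y))^\ast=\ell^q(Y^\ast)$. Here $Y=\H _{p', \omega '  }^{t',  r' }$ is a Banach space by Theorem \ref{block space is Banach}. For $q'\in(0,1)$, $\ell^{q'}(Y)$ is only a quasi-Banach space, but the functional-analytic argument below still applies.

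\textbf{Boundedness.} Let $\{g_j\}\in\ell^q(M_{p,\omega}^{t,r})$ and $\vec f\in\ell^{q'}(Y)$. Theorem \ref{predual scalar}(i) gives
\begin{equation*}
\Big|\int_X g_jf_j\,\d\mu\Big|\le \|g_j\|_{M_{p,\omega}^{t,r}}\|f_j\|_{Y}.
\end{equation*}
If $q'\in(1,\infty)$, summing over $j$ and applying H\"older's inequality in $\ell^{q'}$ shows that the series converges absolutely and the functional has norm at most $\|\{g_j\}\|_{\ell^q(M_{p,\omega}^{t,r})}$. If $q'\le1$, then $q=\infty$ and we use $\|\cdot\|_{\ell^1}\le\|\cdot\|_{\ell^{q'}}$, so
\begin{equation*}
\sum_j|\cdots|\le\sup_j\|g_j\|_{M_{p,\omega}^{t,r}}\,\|\{\|f_j\|_Y\}\|_{\ell^{q'}}.
\end{equation*}

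\textbf{Representation.} Let $J$ be a bounded linear functional on $\ell^{q'}(Y)$. For each $j$, let $\iota_j:Y\to\ell^{q'}(Y)$ put $f$ in the $j$th slot, an isometry. Then $J\circ\iota_j\in Y^\ast$ with norm at most $\|J\|$. By Theorem \ref{predual scalar}(ii) there is a unique $g_j\in M_{p,\omega}^{t,r}$ with $J(\iota_jf)=\int_X g_jf\,\d\mu$ and $\|g_j\|_{M_{p,\omega}^{t,r}}=\|J\circ\iota_j\|$.

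For $q'<\infty$, finitely supported sequences are dense in $\ell^{q'}(Y)$, since the tails of $\sum_j\|f_j\|_Y^{q'}$ vanish. Continuity of $J$ together with the boundedness step then gives
\begin{equation*}
J(\vec f)=\lim_N\sum_{|j|\le N}\int_X g_jf_j\,\d\mu=\sum_{j}\int_X g_jf_j\,\d\mu.
\end{equation*}
Uniqueness follows by testing $J$ on $\iota_j f$ and using the uniqueness in Theorem \ref{predual scalar}(ii).

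\textbf{Norm equality.} The boundedness step gives one inequality. For the other inequality with $q'\in(1,\infty)$:
\begin{itemize}
\item Fix $N$ and $\epsilon>0$. For each $|j|\le N$, choose $h_j\in Y$ with $\|h_j\|_Y=1$ and $\int_X g_jh_j\,\d\mu\ge(1-\epsilon)\|g_j\|_{M_{p,\omega}^{t,r}}$. This is possible because $\|g_j\|=\|J\circ\iota_j\|$, after rotating by a unimodular constant so the pairing is real and nonnegative.
\item Set $f_j=\|g_j\|_{M_{p,\omega}^{t,r}}^{q-1}h_j$ for $|j|\le N$ and $f_j=0$ otherwise.
\item Then $J(\vec f)\ge(1-\epsilon)\sum_{|j|\le N}\|g_j\|^q$, while $\|\vec f\|_{\ell^{q'}(Y)}=\big(\sum_{|j|\le N}\|g_j\|^q\big)^{1/q'}$.
\item This yields $\big(\sum_{|j|\le N}\|g_j\|^q\big)^{1/q}\le(1-\epsilon)^{-1}\|J\|$. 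Let $N\to\infty$ and $\epsilon\to0$.
\end{itemize}
For $q'\le1$ the reverse inequality is immediate: $\|g_j\|=\|J\circ\iota_j\|\le\|J\|$, so $\sup_j\|g_j\|\le\|J\|$.

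\textbf{Expected obstacle.} The case $q'=\infty$ is excluded by the hypothesis that $q'\in(0,\infty)$ implicitly, since $q$ is only defined there. If it were included, density of finitely supported sequences would fail. The main technical points are therefore the density and limit-interchange step, and making sure that Theorem \ref{predual scalar}(ii) applies componentwise, including the quasi-Banach case $q'<1$, where the norming argument above still works because only finitely many coordinates are used.
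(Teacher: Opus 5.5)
Your proof is correct, but its key step differs from the paper's.

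\textbf{What is shared.} Both arguments get the upper bound from Theorem~\ref{predual scalar}(i) together with H\"older's inequality (or $\ell^{q'}\hookrightarrow\ell^1$ when $q'\le1$). Both obtain the components $g_j$ by applying Theorem~\ref{predual scalar}(ii) to $J\circ\iota_j$.

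\textbf{Where you differ.} For the reverse inequality when $q'\in(1,\infty)$, the paper does not use the isometry $\|g_j\|_{M_{p,\omega}^{t,r}}=\|J\circ\iota_j\|$ abstractly. Instead it goes back to the block level. It builds from each $g_j$ explicit blocks $b^j_{k,\tau}$, proportional to $|g_j|^{p-1}\omega\chi_{Q^k_\tau}$ times a sign. It then tests $J$ on $f_j=\sum_{k,\tau}\lambda^j_{k,\tau}b^j_{k,\tau}$. Finally it invokes the Benedek--Panzone duality $(\ell^{q'}(\ell^{r'}))^\ast=\ell^q(\ell^r)$ to recover $\|\{g_j\}\|_{\ell^q(M_{p,\omega}^{t,r})}$ as a mixed-norm sequence norm.

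Your extremal choice $f_j=\|g_j\|^{q-1}h_j$, with near-norming $h_j$, replaces all of that. It uses only the elementary $\ell^q$--$\ell^{q'}$ extremizer and the isometry already contained in Theorem~\ref{predual scalar}. It is shorter, needs no mixed-norm duality, and is really the general fact $(\ell^{q'}(Y))^\ast=\ell^q(Y^\ast)$ for any normed $Y$ whose dual has been identified. The paper's route, in exchange, exhibits the norming test functions concretely.

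\textbf{What your proof adds.} The paper only tests $J$ on finitely supported truncations. It never verifies $\vec g(\vec f)=\sum_j\int_X g_jf_j\,\d\mu$ for a general $\vec f\in\ell^{q'}(\H_{p',\omega'}^{t',r'})$, nor the uniqueness. Your density argument for $q'<\infty$ and your componentwise uniqueness check supply both.

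\textbf{One ordering point.} The absolute convergence of $\sum_j\int_X g_jf_j\,\d\mu$ in your limit step uses $\{g_j\}\in\ell^q(M_{p,\omega}^{t,r})$, which you only establish in the norm-equality step. That step tests $J$ only on finitely supported sequences, where the formula holds by linearity, so there is no circularity. Still, the norm step should be presented before the density step.
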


\begin{proof}
	Step 1. By Theorem \ref{predual scalar}, 
	\begin{align*}
		|\vec  g( \vec f) |  & \le  \sum_{j \in \mathbb Z} \int_X  |g_j (x) f_j (x)| \d \mu (x) \\
		& \le \sum_{j \in \mathbb Z}  \| g_j \|_{M_{p,\omega}^{t,r}  }  \| f_j \|_{ \H _{p', \omega '  }^{t',  r' } }  \\
		& \le  \| \{ g_j\}  \|_{\ell^q ( M_{p,\omega}^{t,r} ) }   \| \{ f_j\}  \|_{ \ell^{q'} ( \H _{p', \omega '  }^{t',  r' } ) } ,
	\end{align*}
where we used H\"older's inequality when $q ' \in (1,\infty) $  and $\ell^{q'} \hookrightarrow \ell^1 $ when $q' \in (0,1] $.
Taking the supremum over $  \| \{ f_j\}  \|_{ \ell^{q'} ( \H _{p', \omega '  }^{t',  r' } ) } \le 1$, we obtain
\begin{equation*}
	\|\vec  g\|_{ \left( \ell^{q'} ( \H _{p', \omega '  }^{t',  r' } )  \right)^\ast }  \le  \| \{ g_j\}  \|_{\ell^q ( M_{p,\omega}^{t,r} ) }.
\end{equation*}
Step 2. Let $ g \in \left( \ell^{q'} ( \H _{p', \omega '  }^{t',  r' } )  \right)^\ast $. 
Put 
\begin{equation*}
	g_j (\vec f) = \vec g (  \{  0, \ldots, 0, f_j, 0, \ldots\}     ), \quad  \vec f= \{f_j \}_{j \in \mathbb Z} \in \ell^{q'} ( \H _{p', \omega '  }^{t',  r' } ).
\end{equation*}
Then $g_j $ belongs to $ ( \H _{p', \omega '  }^{t',  r' } ) ^\ast $.
By the representation of elements of $ ( \H _{p', \omega '  }^{t',  r' } ) ^\ast $ (Theorem \ref{predual scalar}), there exists a unique function $  g_j \in M_{p,\omega}^{t,r}$ such that 
\begin{equation*}
		g_j (\vec f) = \int_X   g_j (x) f_j (x) \d \mu (x), \quad \|g_j \|_{ ( \H _{p', \omega '  }^{t',  r' } ) ^\ast  }  = \|  g_j \|_{  M_{p,\omega}^{t,r} } , 
\end{equation*}
where $g_j$ in the last formula is interpreted as an element of $ ( \H _{p', \omega '  }^{t',  r' } ) ^\ast $. 
It follows that 
\begin{equation} \label{step 2 key ineq}
\sum_{|j| \le N } 	\int_X   g_j (x) f_j (x) \d \mu (x) \le \|\vec g\|_{ \left( \ell^{q'} ( \H _{p', \omega '  }^{t',  r' } )  \right)^\ast   }  \| \{f_j\}_{ |j|\le N }  \|_{ \ell^{q'} ( \H _{p', \omega '  }^{t',  r' } )  } .
\end{equation}
Since $g_j \in M_{p,\omega}^{t,r} $,  for each $k\in \mathbb Z $, $\tau \in I_k$, and $x\in X$, let
\begin{equation} \label{def b_k,tau j}
	b_{k,\tau}^j  := \begin{cases} \omega (Q^k_\tau ) ^{1/t-1/p} \|  g_j\|_{ L^p (\omega, Q^k_\tau) } ^{1-p}   |g_j(x)|^{p-1} \omega (x)   \chi_{ Q^k_\tau } (x)  \sgn  g_j (x)
		& {\rm if}\; \|  g_j \|_{ L^p (\omega, Q^k_\tau) } \neq 0, \\
		0, & {\rm if}\; \| g_j  \|_{ L^p (\omega, Q^k_\tau) }= 0.
	\end{cases}
\end{equation} 
Then for $ \|  g_j \|_{ L^p (\omega, Q^k_\tau) } \neq 0 $,
\begin{align*}
	\| b_{k,\tau} ^j  \|_{L^{p'} (\omega ' , Q^k_\tau)  } 
	& = \omega (Q^k_\tau ) ^{1/t-1/p}  .
\end{align*}
Hence $b_{k,\tau} ^j$ is a $ (p ', t', \omega ')$-block supported in $Q^k_\tau $. 
Now for $ \{ \lambda_{k,\tau }\}_{k \in \mathbb Z, \tau \in I_k  }  \in \ell^{r'}$, let 
\begin{equation} \label{f_j by tilde g}
	f_j = \sum_{ k \in \mathbb Z, \tau \in I_k  }   \lambda_{k,\tau }^j  b_{k,\tau} ^j   \in  \H _{p', \omega '  }^{t',  r' } .
\end{equation}
Put (\ref{f_j by tilde g}) into (\ref{step 2 key ineq}), and we obtain
\begin{align} \label{new estimate sum tilde g_j f_j}
	\nonumber
\sum_{|j| \le N } 	\int_X    g_j (x) f_j (x) \d \mu (x) & = \sum_{|j| \le N } \sum_{ k \in \mathbb Z, \tau \in I_k  }   \lambda_{k,\tau } ^j  \omega (Q^k_\tau ) ^{1/t-1/p}  \left(  \int_{ Q^k _\tau } |  g_j (x) |^p \omega (x) \d \mu (x) \right) ^{1/p}  \\
\nonumber
& \le  \|\vec  g\|_{ \left( \ell^{q'} ( \H _{p', \omega '  }^{t',  r' } )  \right)^\ast   }  \| \{f_j\}_{ |j|\le N }  \|_{ \ell^{q'} ( \H _{p', \omega '  }^{t',  r' } )   }  \\
& \le \|\vec g\|_{ \left( \ell^{q'} ( \H _{p', \omega '  }^{t',  r' } )  \right)^\ast   }  \left( \sum_{ |j|\le N }   \left(  \sum_{ k \in \mathbb Z, \tau \in I_k  }   |\lambda_{k,\tau } ^j |^{r'}  \right)^{q'/r'}   \right)^{1/q'}.
\end{align}
Case $ q' \in (1,\infty) $.
Then  by (\ref{new estimate sum tilde g_j f_j}) and  the duality of mixed-norm Lebesgue sequence space (\cite[Theorem 2]{BP61}),
\begin{align*}
	\| \vec g \|_{ \ell^{ q} (M_{p,\omega}^{t,r}  )  } & = \left(  \sum_{j \in \mathbb Z} \|  g_j \|_{ M_{p,\omega}^{t,r} }    ^q  \right)^{1/q} \\
	 &  =  \left(  \sum_{j \in \mathbb Z}  \left(\sum_{ k \in \mathbb Z, \tau \in I_k  }   \omega (Q^k_\tau ) ^{r/t-r/p}  \left(  \int_{ Q^k _\tau } | g_j (x) |^p \omega (x) \d \mu (x) \right) ^{r/p}   \right)     ^q  \right)^{1/q} \\
	&= \sup_{ \| \{  \lambda_{k,\tau }^j\} \|_{\ell^{q'}_j (   \ell^{r' }_{k,\tau}  )  }  \le 1 }  \sum_{j \in \mathbb Z}   \sum_{ k \in \mathbb Z, \tau \in I_k  }   \lambda_{k,\tau }^j  \omega (Q^k_\tau ) ^{1/t-1/p}  \left(  \int_{ Q^k _\tau } | g_j (x) |^p \omega (x) \d \mu (x) \right) ^{1/p}  
	 \\
	& \le \|\vec g\|_{ \left( \ell^{q'} ( \H _{p', \omega '  }^{t',  r' } )  \right)^\ast   }  .
\end{align*}
Case $ q' \in (0,1] $. 
First note that \begin{align} \label{g_j norm le g norm}
	\nonumber
	\| g_j \|_{ ( \H _{p', \omega '  }^{t',  r' } ) ^\ast  }  & =  \sup_{ \|\vec f\|_{  \ell^{q'} ( \H _{p', \omega '  }^{t',  r' } )   }  \le 1   }    | \vec g (  \{  0, \ldots, 0, f_j, 0, \ldots\}     ) | \\
	\nonumber
	& \le \sup_{ \|\vec f\|_{  \ell^{q'} ( \H _{p', \omega '  }^{t',  r' } )   }  \le 1   }    \| \vec g\|_{  \left( \ell^{q'} ( \H _{p', \omega '  }^{t',  r' } )  \right)^\ast   }  \| \{  0, \ldots, 0, f_j, 0, \ldots\} \|_{ \ell^{q'} ( \H _{p', \omega '  }^{t',  r' } )   }  \\
	& \le \| \vec g\|_{  \left( \ell^{q'} ( \H _{p', \omega '  }^{t',  r' } )  \right)^\ast   } .
\end{align}
Taking the supremum over $j \in \mathbb Z$, we obtain 
\begin{equation*}
	\sup_{ j \in \mathbb Z}\| g_j \|_{ ( \H _{p', \omega '  }^{t',  r' } ) ^\ast  } \le  \| \vec g\|_{  \left( \ell^{q'} ( \H _{p', \omega '  }^{t',  r' } )  \right)^\ast   }  .
\end{equation*}
By (\ref{def b_k,tau j}) and (\ref{g_j norm le g norm}),
\begin{align*}
	\| \vec g \|_{ \ell^{\infty  } (M_{p,\omega}^{t,r}  )  } 
	& =  \sup_{ j \in \mathbb Z} \sup_{ \|  \{  \lambda_{k,\tau }^j \} \|_{ \ell^{r' }_{k,\tau}  }  \le 1  }  \sum_{ k \in \mathbb Z, \tau \in I_k  }   \lambda_{k,\tau }^j  
	\int_{  Q^k _\tau } | b_{k,\tau}^j (x) | |g_j(x) | \d \mu  (x)
   \\
	& = \sup_{ j \in \mathbb Z} \sup_{ \|  \{  \lambda_{k,\tau }^j \} \|_{ \ell^{r' }_{k,\tau}  }  \le 1  }
	\int_X |g_j (x) | \left(  \sum_{ k \in \mathbb Z, \tau \in I_k  }   \lambda_{k,\tau }^j | b_{k,\tau}^j (x) |  \right) \d \mu  (x) \\	
	& \le  \sup_{ j \in \mathbb Z} \sup_{ \|  \{  \lambda_{k,\tau }^j \} \|_{ \ell^{r' }_{k,\tau}  }  \le 1  }
	\| g_j \|_{ ( \H _{p', \omega '  }^{t',  r' } ) ^\ast  }  \left\|   \sum_{ k \in \mathbb Z, \tau \in I_k  }   \lambda_{k,\tau }^j  |b_{k,\tau}^j|      \right\|_{  \H _{p', \omega '  }^{t',  r' }  } \\
	&\le \sup_{ j \in \mathbb Z}\| g_j \|_{ ( \H _{p', \omega '  }^{t',  r' } ) ^\ast  } \le \| \vec g\|_{  \left( \ell^{q'} ( \H _{p', \omega '  }^{t',  r' } )  \right)^\ast   }  .
\end{align*}
Thus we complete the proof.
\end{proof}
Next we obtain the main result of this section.

\begin{theorem} \label{predual Besov}
	Let   $s\in \mathbb R$ and $\omega \in A_p$ for $ p \in (1,\infty)$.	
	Let $\beta_\omega : =  1 - (1- \alpha_0 ) ^{ p }  / [\omega ]_{A_  {p} }$ where $\alpha_0$  is the reverse doubling  constant of $X$. 
	Let $ 1< p< t < \min\{ r, \log_2 ( \beta_\omega ^{-1} )  r  /n \}  <\infty$ or let $ 1 <p \le t <r =\infty $.
		Let 
	\begin{equation*}
		q = \begin{cases}
			\frac{q'}{q' -1}, & {\rm if} \;  q' \in (1,\infty) , \\
			\infty, & {\rm if} \;  q' \in (0,1] .
		\end{cases} 
	\end{equation*}	
	Then
	\begin{align*}
	\left( \dot{B \H}^{ - s,q' ,\psi,L}_{p ',t' ,r ',\omega ' } (X) \right) ^\ast =	\dot{\mathcal B}^{s,q,\psi,L}_{p,t,r,\omega} (X) .
	\end{align*}	
\end{theorem}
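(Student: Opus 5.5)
We follow the standard retraction/coretraction scheme: $\dot{B \H}$ is identified with a complemented piece of $\ell^{q'}(\H^{t',r'}_{p',\omega'})$ (with weights $2^{-js}$), and Proposition \ref{pro ell H = ell BM} then gives the dual. We fix an auxiliary $\tilde\psi\in\mathscr S(\mathbb R)$ with $\operatorname{supp}\tilde\psi\subset[1/4,4]$ and $\tilde\psi=1$ on $[1/2,2]$, and set $\tilde\psi_j=\tilde\psi(2^{-j}\cdot)$, so that $\psi_j=\psi_j\tilde\psi_j$ and $\psi_j(\sqrt L)=\tilde\psi_j(\sqrt L)\psi_j(\sqrt L)$.

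\emph{Step 1 (inclusion $\dot{\mathcal B}^{s,q,\psi,L}_{p,t,r,\omega}\subset(\dot{B\H})^\ast$).} For $g\in \dot{\mathcal B}^{s,q,\psi,L}_{p,t,r,\omega}(X)$ we define $\ell_g(f):=\sum_{j}\int_X \psi_j(\sqrt L)f(x)\,\tilde\psi_j(\sqrt L)g(x)\,\d\mu(x)$ for $f\in\dot{B\H}$. For $f\in\mathcal S_\infty$ this equals $\langle g,\bar f\rangle$, by Lemma \ref{conv in S infty}, self-adjointness, and real symmetric kernels. By Theorem \ref{predual scalar} and Hölder's inequality in $j$ (or $\ell^{q'}\hookrightarrow\ell^1$ when $q'\le1$), $|\ell_g(f)|\le \|f\|_{\dot{B\H}}\,\|\{2^{js}\tilde\psi_j(\sqrt L)g\}\|_{\ell^q(M^{t,r}_{p,\omega})}$. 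It remains to bound $\|\tilde\psi_j(\sqrt L)g\|_{M^{t,r}_{p,\omega}}$ by $\sum_{|i-j|\le 2}\|\psi_i(\sqrt L)g\|_{M^{t,r}_{p,\omega}}$. Writing $\tilde\psi_j=\tilde\psi_j\sum_{|i-j|\le2}\psi_i$ and using Lemma \ref{kernel est} together with Lemma \ref{basic est}(ii), we get $|\tilde\psi_j(\sqrt L)\psi_i(\sqrt L)g|\lesssim\mathcal M(\psi_i(\sqrt L)g)$, and Lemma \ref{M bour weight}(i) finishes (the hypotheses on $r,t$ match $\beta=\beta_\omega$). This also shows that $\ell_g$ is independent of the choice of $\tilde\psi$ and extends the pairing on $\mathcal S_\infty$, which is dense by Lemma \ref{S_infty embed}.

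\emph{Step 2 (converse).} Let $\ell\in(\dot{B\H})^\ast$. The map $I:f\mapsto\{2^{-js}\psi_j(\sqrt L)f\}_j$ is an isometry of $\dot{B\H}$ into $\ell^{q'}(\H^{t',r'}_{p',\omega'})$. By Hahn--Banach (for $q'\ge1$; for $q'<1$ we extend via the $\ell^1$-type bound, or work directly on the image, since the dual of a $q'$-normed space here is still represented through Proposition \ref{pro ell H = ell BM}), $\ell\circ I^{-1}$ extends to an element of $(\ell^{q'}(\H))^\ast$ of the same norm. Proposition \ref{pro ell H = ell BM} gives $\{h_j\}\in\ell^q(M^{t,r}_{p,\omega})$ with $\ell(f)=\sum_j 2^{-js}\int \psi_j(\sqrt L)f\,h_j\,\d\mu$. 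We then set $g:=\sum_j 2^{-js}\psi_j(\sqrt L)h_j$ in $\mathcal S_\infty'$. Convergence is checked by testing against $\phi\in\mathcal S_\infty$: the kernel bounds give $|\psi_j(\sqrt L)\phi|\lesssim 2^{-|j|N}(1+\rho(\cdot,x_0))^{-m}$, and Example \ref{exm ploy} together with Theorem \ref{predual scalar} controls $\int|h_j||\psi_j(\sqrt L)\phi|\,\d\mu$. Then $\ell(\phi)=\langle g,\bar\phi\rangle$ on $\mathcal S_\infty$. Moreover $\psi_i(\sqrt L)g=\sum_{|j-i|\le1}2^{-js}\psi_i(\sqrt L)\psi_j(\sqrt L)h_j$, and the same maximal-function argument as in Step 1 gives $\|g\|_{\dot{\mathcal B}^{s,q}}\lesssim\|\{h_j\}\|_{\ell^q(M)}\lesssim\|\ell\|$. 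Finally, $\ell=\ell_g$ on the dense set $\mathcal S_\infty$, hence everywhere.

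\emph{Main obstacle.} The delicate point is the density of $\mathcal S_\infty$ in $\dot{B\H}$, which is needed for uniqueness and for the identification $\ell=\ell_g$. Lemma \ref{S_infty embed} only gives the embedding. Our plan is to approximate $f$ by $\sum_{|j|\le N}\psi_j(\sqrt L)f$, with the tail controlled since $q'<\infty$. Each finite piece $\psi_j(\sqrt L)f$ would then be approximated by truncated block sums passed through $\tilde\psi_j(\sqrt L)$, using boundedness of $\tilde\psi_j(\sqrt L)$ on $\H^{t',r'}_{p',\omega'}$. That boundedness follows by duality from Step 1's maximal estimate and Theorem \ref{predual scalar}, since $\tilde\psi_j(\sqrt L)$ has a symmetric kernel. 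If density fails in general, we would instead define $\ell_g$ directly through the series above and prove $\ell=\ell_g$ via the $\tilde\psi$ reproducing identity $\psi_j=\psi_j\tilde\psi_j$, applied to $I f$.
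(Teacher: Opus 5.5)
Your architecture is the paper's: isometric embedding $I$ into $\ell^{q'}(\H)$, extension, representation by Proposition \ref{pro ell H = ell BM}, reconstruction $g=\sum_j 2^{-js}\psi_j(\sqrt L)h_j$ in $\mathcal S_\infty'$, and maximal-function control of $\psi_i(\sqrt L)\psi_j(\sqrt L)$. For $q'\ge1$ it goes through. Your $\tilde\psi$ device in Step 1 is cleaner than the paper, which passes to a different $\psi$ with $\sum_j\psi_j^2=1$ and bounds the pairing only on $\mathcal S_\infty$.

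The genuine gap is the extension in Step 2 when $q'\in(0,1)$. There $\ell^{q'}(\H)$ is only $q'$-normed, so Hahn--Banach extension from the subspace $I(\dot{B\H})$ is not available. Proposition \ref{pro ell H = ell BM} describes the dual of the whole space $\ell^{q'}(\H)$, not of that subspace. So ``the dual is still represented through the Proposition'' yields no extension, and the ``$\ell^1$-type bound'' is only asserted. The missing ingredient is
\[
\big\|\tilde\psi_j(\sqrt L)h\big\|_{\dot{B\H}}\lesssim 2^{-js}\|h\|_{\H},\qquad h\in\H .
\]
It holds for four reasons: $\psi_i\tilde\psi_j\neq0$ only for $|i-j|\le2$; $\|F\|_{\H}=\sup\{|\int_X FE\,\d\mu|:\|E\|_{M^{t,r}_{p,\omega}}\le1\}$ by Theorem \ref{predual scalar}; $|\tilde\psi_j(\sqrt L)\psi_i(\sqrt L)E|\lesssim\M E$; and Lemma \ref{M bour weight}. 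Now use that $\sum_{|j|\le N}\psi_j(\sqrt L)f\to f$ in $\dot{B\H}$ for $q'<\infty$, and that $\psi_j(\sqrt L)f=\tilde\psi_j(\sqrt L)\psi_j(\sqrt L)f$. Together with the estimate this gives $|\ell(f)|\lesssim\|\ell\|\sum_j2^{-js}\|\psi_j(\sqrt L)f\|_{\H}$, i.e.\ $\ell\circ I^{-1}$ is bounded for the $\ell^1(\H)$-norm. Hahn--Banach in the normed space $\ell^1(\H)$ and the Proposition with $q'=1$ then give $\{h_j\}\in\ell^\infty(M^{t,r}_{p,\omega})$. Alternatively, the same estimate makes $R\{F_j\}:=\sum_j2^{js}\tilde\psi_j(\sqrt L)F_j$ a bounded left inverse of $I$, once one checks the series converges in $\mathcal S_\infty'$. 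Then $\ell\circ R$ extends $\ell\circ I^{-1}$ for every $q'\in(0,\infty)$ with no Hahn--Banach; this is the complemented-subspace scheme your first sentence announces but Step 2 does not use. The paper handles $q'<1$ by testing $2^{js}\psi_j(\sqrt L)$ of the functional directly against $h\in\H$, using exactly this estimate.

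Your ``main obstacle'' is not needed: drop the density plan and use your fallback, which works as stated. Start from $\ell(f)=\sum_j2^{-js}\int_X\psi_j(\sqrt L)f\,h_j\,\d\mu$, valid on all of $\dot{B\H}$. One has $\tilde\psi_i(\sqrt L)g=\sum_{|j-i|\le2}2^{-js}\tilde\psi_i(\sqrt L)\psi_j(\sqrt L)h_j$. Move the symmetric operator $\psi_j(\sqrt L)\tilde\psi_i(\sqrt L)$ across the integral; Fubini is justified by $\int_X|F|\,\M G\,\d\mu\le\|F\|_{\H}\|\M G\|_{M^{t,r}_{p,\omega}}$. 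Using $\tilde\psi_i\psi_i=\psi_i$, each term becomes $2^{-js}\int_X(\psi_j\psi_i)(\sqrt L)f\,h_j\,\d\mu$. Sum over $i$ first (the double series converges absolutely by H\"older) and use $\sum_i\psi_j\psi_i=\psi_j$; this gives $\ell_g=\ell$ on the whole space. Injectivity of $g\mapsto\ell_g$ follows from $\mathcal S_\infty\subset\dot{B\H}$. The density route would additionally need $\tilde\psi_j(\sqrt L)$ to map finite block sums into $\mathcal S_\infty$, and those sums lie merely in $L^{p'}(\omega')$ with no heat-kernel regularity assumed. The paper never verifies this identification and works only with the functional's restriction to $\mathcal S_\infty$, so here your argument is more complete.
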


\begin{proof}
	First we prove the inclusion $\dot{\mathcal B}^{s,q,\psi,L}_{p,t,r,\omega} (X)   \subset 	\left( \dot{B \H}^{ - s,q' ,\psi,L}_{p ',t' ,r ',\omega ' } (X) \right) ^\ast $. Let $ f \in \dot{\mathcal B}^{s,q,\psi,L}_{p,t,r,\omega} (X) $. Our goal is to prove that $f \in 	\left( \dot{B \H}^{ - s,q' ,\psi,L}_{p ',t' ,r ',\omega ' } (X)  \right) ^\ast$ and 
	\begin{equation} \label{goal 1}
		\| f\|_{ 	\left(\dot{B \H}^{ - s,q' ,\psi,L}_{p ',t' ,r ',\omega ' } (X)  \right) ^\ast } \le c \| f\|_{ \dot{\mathcal B}^{s,q,\psi,L}_{p,t,r,\omega} (X) } .
	\end{equation}
	Let $\psi \in C_0^\infty (\mathbb R _+) $ be a real-valued function  such that supp $\psi  \subset [1/2,2]$ and
\begin{equation*}
	\sum_{j\in \mathbb Z} \psi^2 ( 2^{-j} \lambda ) =1
\end{equation*}
for all $ \lambda \in \mathbb R _+$. Set $\psi_j (\lambda) = \psi(2^ {-j} \lambda), j\in \mathbb Z$. Then $\sum_{j \in \mathbb Z}  \psi_{j}^2 (\lambda) =1 $ for $\lambda \in \mathbb R_+$. Hence, by Lemma \ref{conv in S infty}, we have
\begin{equation} \label{eq f = sum psi 2}
	f = \sum_{j\in \mathbb Z} \psi_j ^2 \big(\sqrt{L}\big) f  \quad \operatorname{in} \;  \mathcal S _\infty '.
\end{equation}	
By (\ref{eq f = sum psi 2}), 	for every $\phi \in \mathcal S_\infty $, we have
	\begin{equation*}
		\langle f, \phi\rangle = \sum_{j= \mathbb Z } 	\langle \psi_j (\sqrt{L})  f, \psi_j (\sqrt{L}) \phi\rangle .
	\end{equation*}
Then by Theorem \ref{predual scalar},
\begin{align*}
	|	\langle f, \phi\rangle | & \le \sum_{j= \mathbb Z } | \langle  \psi_j (\sqrt{L})  f ,  \psi_j (\sqrt{L}) \phi  \rangle | \\
	& \le \sum_{j= \mathbb Z }  \left\|   2^{js} \psi_j (\sqrt{L})  f  \right\|_{ M_{p, \omega }^{t,r}   } \left\|  2^{-js} \psi_j (\sqrt{L}) \phi   \right\|_{ \H _{p', \omega '  }^{t',  r' } } .
\end{align*}
For $1< q' <\infty$, apply the H\"older inequality and get
\begin{equation} \label{q ge 1}
	|	\langle f, \phi\rangle | \le   \left(  \sum_{j= \mathbb Z }  \left\|   2^{js} \psi_j (\sqrt{L})  f  \right\|_{ M_{p, \omega }^{t,r}   } ^{q}  \right) ^{1/q}   \left(  \sum_{j= \mathbb Z }  \left\|  2^{-js} \psi_j (\sqrt{L}) \phi   \right\|_{ \H _{p', \omega '  }^{t',  r' }  } ^{q'}  \right) ^{1/q'} .
\end{equation}
For $ 0<q' \le 1$, using $ \ell^{q'} \hookrightarrow \ell^1$ we have
\begin{align} \label{Holder q' < 1}
	\nonumber
		|	\langle f, \phi\rangle | & \le \sup_{j\in \mathbb Z}  \left\|  2^{js} \psi_j (\sqrt{L}) \phi   \right\|_{ M_{p, \omega }^{t,r} }  \sum_{j= \mathbb Z }  \left\|   2^{-js} \psi_j (\sqrt{L})  f  \right\|_{ \H _{p', \omega '  }^{t',  r' }  }  \\
		& \le  \sup_{j\in \mathbb Z}  \left\|  2^{js} \psi_j (\sqrt{L}) \phi   \right\|_{ M_{p, \omega }^{t,r} }  \left(  \sum_{j= \mathbb Z }  \left\|   2^{-js} \psi_j (\sqrt{L})  f  \right\|_{ \H _{p', \omega '  }^{t',  r' }  }  ^{q'} \right) ^{ 1/q' }  .
\end{align}
From (\ref{q ge 1}) and (\ref{Holder q' < 1}), we obtain (\ref{goal 1}).

Next we proceed with the embedding $\left(\dot{B \H}^{ - s,q' ,\psi,L}_{p ',t' ,r ',\omega ' } (X)  \right) ^\ast  \subset \dot{\mathcal B}^{s,q,\psi,L}_{p,t,r,\omega} (X)  $. 

Case $ q' \in [1,\infty )  $. In this case,  $ \ell^{q'} ( \H _{p', \omega '  }^{t',  r' } ) $ is a Banach space by Theorem \ref{block space is Banach}.
Let $Y$ be the subspace of the normed space $ \ell^{q'} ( \H _{p', \omega '  }^{t',  r' } ) $ defined by
\begin{equation*}
	Y:= \left\{     \{  2^{ - js} \psi_j (\sqrt{L}) g  \} _{j \in \mathbb Z } : g \in  \dot{B \H}^{ - s,q' ,\psi,L}_{p ',t' ,r ',\omega ' } (X)   \right\} .
\end{equation*}
Note that the mapping $\rho:\dot{B \H}^{ - s,q' ,\psi,L}_{p ',t' ,r ',\omega ' } (X)   \to Y,$  $g  \mapsto \{  2^{ - js} \psi_j (\sqrt{L}) g  \} _{j \in \mathbb Z } $ is an isometric isomorphism.
If $f \in\left( \dot{B \H}^{ - s,q' ,\psi,L}_{p ',t' ,r ',\omega ' } (X)   \right) ^\ast $, we define $f_0 :=  f \circ  \rho^{-1} :  Y \to \mathbb C $ and note that $f_0 \in Y ^\ast  $ with $ \|f_0 \|_{ Y^\ast}  = \| f\|_{ \left( \dot{B \H}^{ - s,q' ,\psi,L}_{p ',t' ,r ',\omega ' } (X) \right) ^\ast }$. Then according to the Hahn-Banach Theorem there exists an
extension $\tilde  f  \in   \left(  \ell^{q'} ( \H _{p', \omega '  }^{t',  r' } ) \right) ^\ast $  of $f_0$ preserving the operator norm i.e., 
\begin{equation*}
	\| \tilde f \|_{  \left(  \ell^{q'} ( \H _{p', \omega '  }^{t',  r' } ) \right) ^\ast  } = \|f_0 \|_{Y^\ast }  = \| f\|_{  \left( \dot{B \H}^{ - s,q' ,\psi,L}_{p ',t' ,r ',\omega ' } (X)  \right) ^\ast  }.
\end{equation*}
Moreover, by Proposition \ref{pro ell H = ell BM}, there exists $  \{  f_j \}_{j\in \mathbb Z} \in \ell^q ( M_{p,\omega}^{t,r} ) $ representing $\tilde f$, that is 
\begin{equation*}
\tilde f (\vec  g) = \sum_{j  \in \mathbb Z}  \int_X f_j (x) g_j (x) \d \mu (x)
\end{equation*}
for each $\vec g  = \{  g_j \}_{j\in \mathbb Z} \in \ell^{q'} ( \H _{p', \omega '  }^{t',  r' } )  $ and $ \| \tilde f \| _{ \left(  \ell^{q'} ( \H _{p', \omega '  }^{t',  r' } ) \right) ^\ast  } = \|  \{  f_j \}_{j\in \mathbb Z} \|_{ \ell^q ( M_{p,\omega}^{t,r} )  }  $.

Now let $\phi \in \mathcal S_\infty$, since $\mathcal S_\infty \subset \mathcal S  \subset  \dot{B \H}^{ - s,q' ,\psi,L}_{p ',t' ,r ',\omega ' } (X)  $ (Lemma \ref{S_infty embed}), by definition $\rho (\phi)  =\{  2^{ - js} \psi_j (\sqrt{L}) \phi \} _{j \in \mathbb Z }   \in Y $. Then 
\begin{align*}
	\langle f, \phi \rangle & = f(\bar \phi) =  (f \circ \rho^{-1} ) ( \rho (\bar \phi) ) = \tilde{f} ( \rho (\bar \phi) ) \\
	&  = \sum_{j \in \mathbb Z} \int_X f_j (x) 2^{ -js} \psi_j (\sqrt{L}) \bar \phi (x) \d \mu (x) \\
	&= \sum_{j \in \mathbb Z} \int_X f_j (x) 2^{-js} \int_X K_{\psi_j (\sqrt L)} (x,y)  \bar \phi (y) \d \mu (y) \d \mu (x) \\
	& = \sum_{j \in \mathbb Z} \int_X \int_X K_{\psi_j (\sqrt L)} (x,y)f_j (x) 2^{ -js} \d \mu (x)\bar \phi (y) \d \mu (y)  \\
	& = \sum_{j \in \mathbb Z} \int_X  \psi_j (\sqrt L )  (2^{ -js} f_j  ) (y)  \bar \phi (y) \d \mu (y)  \\
	& = \sum_{j \in \mathbb Z} \langle \psi_j (\sqrt L )  (2^{ -js} f_j  ), \phi \rangle ,
\end{align*}
which implies that 
\begin{equation} \label{f = sum_j in S _infty dual}
	f = \sum_{j \in \mathbb Z}  \psi_j (\sqrt L )  (2^{-js} f_j  )
\end{equation}
in the sense of $\mathcal S_\infty' $.
To justify the above change in the order of integration, by Lemma \ref{kernel est} and Theorem \ref{predual scalar}, we have
\begin{align*}
 &	\int_X \int_X  | K_{\psi_j (\sqrt L)} (x,y)f_j (x) 2^{ -js}   \phi (y) | \d \mu (x)  \d \mu (y)  \\
 & \lesssim 2^{-js} \int_X \int_X   \frac{1}{ |B(y, 2^{-j})   |  (1+ 2^j \rho (x,y)) ^{-\sigma}  } |  f_j (x)    \phi (y) |\d \mu (x) \d \mu (y)  \\
 & \lesssim \| f_j \|_{ M_{p,\omega}^{t,r} } 2^{-js}  \left\| \int_X \frac{1}{ |B(y, 2^{-j})   |  (1+ 2^j \rho (x,y)) ^{-\sigma}  } | \phi (y) | \d \mu (y)  \right\|_{\H _{p', \omega '  }^{t',  r' } }  .
\end{align*}
Recall that since $\phi \in \mathcal S _\infty$, for each $k\in \mathbb N$, there exists $\phi_k \in \mathcal S$ such that $\phi = L^k \phi_k$. For $m>0$ and $\ell , k \in \mathbb N$, we have
\begin{equation*} 
	\mathcal P _{m, \ell ,k} ^* (\phi) = \sup_{x\in X} (1 + \rho (x, x_0)) ^m  \big|  L^\ell \phi_k (x)    \big|.
\end{equation*}
Choosing  $\sigma >n$ and  $m > n + (\log_2 \beta_\omega  )  /t$, by Lemma \ref{basic est}, Example \ref{exm ploy} and Theorem \ref{predual scalar},  we obtain
\begin{align*}
&	\left\| \int_X \frac{1}{ |B(y, 2^{-j})   |  (1+ 2^j \rho (x,y)) ^{-\sigma}  } | \phi (y) | \d \mu (y)  \right\|_{\H _{p', \omega '  }^{t',  r' } }  \\
& = \sup_{\|h\|_ { M_{p,\omega}^{t,r} }\le 1 } \int_X \int_X \frac{1}{ |B(y, 2^{-j})   |  (1+ 2^j \rho (x,y)) ^{-\sigma}  } | \phi (y) | \d \mu (y) h (x) \d \mu (x) \\
& \le  \sup_{\|h\|_ { M_{p,\omega}^{t,r} }\le 1 } 	\mathcal P _{m, 0 ,0} ^* (\phi)   \int_X \int_X \frac{1}{ |B(y, 2^{-j})   |  (1+ 2^j \rho (x,y)) ^{-\sigma}  }   h (x)  \d \mu (x) ( 1+ \rho (y,x_0))^{-m} \d \mu (y) \\
& \lesssim  \sup_{\|h\|_ { M_{p,\omega}^{t,r} }\le 1 } \mathcal P _{m, 0 ,0} ^* (\phi)   \int_X \M h (y) ( 1+ \rho (y,x_0))^{-m} \d \mu (y) \\
& \le \sup_{\|h\|_ { M_{p,\omega}^{t,r} }\le 1 } \mathcal P _{m, 0 ,0} ^* (\phi)  \| \M h \|_{ M_{p,\omega}^{t,r} }  \| ( 1+ \rho ( \cdot,x_0))^{-m} \|_{\H _{p', \omega '  }^{t',  r' }  } \\
& \lesssim  \mathcal P _{m, 0 ,0} ^* (\phi) .
\end{align*}
Now getting back into (\ref{f = sum_j in S _infty dual}).
Next we will show $f \in \dot{\mathcal B}^{s,q,\psi,L}_{p,t,r,\omega} (X)$ and that 
\begin{equation*}
	\| f \|_{\dot{\mathcal B}^{s,q,\psi,L}_{p,t,r,\omega} (X) } \lesssim \| f\|_{  \left( \dot{B \H}^{ - s,q' ,\psi,L}_{p ',t' ,r ',\omega ' } (X)   \right) ^\ast}.
\end{equation*}
Let $\ell \in \mathbb Z$. Then 
\begin{align*}
	\psi_\ell (\sqrt{ L}) f (x)& =  \sum_{  j = \ell -1 } ^{\ell  +1} 	\psi_\ell (\sqrt{ L})  \psi_j  (\sqrt{ L}) (2^{-js}  f_j  ) (x) .
\end{align*}
It follows that 
\begin{equation} \label{f dot B le sum}
	 \| f \|_{ \dot{\mathcal B}^{s,q,\psi,L}_{p,t,r,\omega} (X) } \lesssim \left(\sum_{\ell \in \mathbb Z}  2^{\ell s q } \left( \sum_{  j = \ell -1 } ^{\ell  +1}  \| 	\psi_\ell (\sqrt{ L})  \psi_j  (\sqrt{ L}) (2^{ -js } f_j  ) \|_{ M_{p,\omega}^{t,r} }  \right)  ^{q}    \right)  ^{1/q} .
\end{equation}
Since $  |j-\ell | \le 1  $, by Lemma \ref{kernel est},
\begin{align*}
	| K_{	\psi_\ell (\sqrt{ L})}  (x, y ) |  & \lesssim D_{2^{ -\ell }, \sigma } (x,y) \lesssim D_{2^{ -j }, \sigma } (x,y) ,\\
	| K_{	\psi_j (\sqrt{ L})}  (y, z ) |  & \lesssim D_{2^{ -j }, \sigma } (y, z ) ,
\end{align*}
where $\sigma$  is arbitrarily large at our disposal. 
Here and in what follows,
\begin{equation*}
	D_{ \delta , \sigma } (x,y) =  ( V (x, \delta)     V (y, \delta )  ) ^{-1/2}   ( 1+  \delta ^{-1}  \rho (x,y)  ) ^{-\sigma} , \quad  x,y \in X, \delta , \sigma >0.
\end{equation*} 
 Moreover, 
\begin{equation*}
	K_{ 	\psi_\ell (\sqrt{ L})  \psi_j  (\sqrt{ L}) } (x,z) = \int_X K_{	\psi_\ell (\sqrt{ L})   } (x,y) K_{	 \psi_j  (\sqrt{ L})}  (y,z) \d \mu (y).
\end{equation*}
Assume that $  \sigma > 2 n +|s|$, we have
\begin{equation} \label{kernel estimate psi ell psi j}
|	K_{ 	\psi_\ell (\sqrt{ L})  \psi_j  (\sqrt{ L}) } (x,z) | \lesssim  D_{2^{ -j }, \sigma } (x, z ) .
\end{equation}
Then by Lemma \ref{basic est}
\begin{align} \label{psi ell psi j control by M}
	| 	\psi_\ell (\sqrt{ L})  \psi_j  (\sqrt{ L}) (  2^{-js} f_j)  (x)  | &  \lesssim \int_X D_{2^{ -j }, \sigma } (x, y ) | 2^{-js} f_j (y) \d \mu (y) \lesssim \M (2^{-js} f_j  ) (x).
\end{align}
Hence 
\begin{align*}
	\| 	\psi_\ell (\sqrt{ L})  \psi_j  (\sqrt{ L})  (  2^{-js} f_j) \|_{ M_{p,\omega}^{t,r} } \lesssim \| 2^{-js} f_j\|_{M_{p,\omega}^{t,r} }.
\end{align*}
Finally using this last inequality in (\ref{f dot B le sum}), we have
\begin{align*}
	\| f\|_{  \dot{B}^{ s, q ,\psi,L}_{p  , t  ,r ,\omega} (X) } &  \lesssim \left(\sum_{\ell \in \mathbb Z}  2^{\ell s q } \left( \sum_{  j = \ell -1 } ^{\ell  +1}  \| 2^{-js} f_j\|_{M_{p,\omega}^{t,r} }  \right)  ^{q}    \right)  ^{1/q} \\
	&\lesssim  \| \{f_j \}_{j\in \mathbb Z}   \|_{ \ell^{q} (M_{p,\omega}^{t,r} )  } \lesssim   \|f\|_{ \left( \dot{B \H}^{ - s,q' ,\psi,L}_{p ',t' ,r ',\omega ' } (X)   \right) ^\ast }.
\end{align*}

Case $q ' \in (0,1)$.
Since in this case $\ell^{q'} ( \H _{p', \omega '  }^{t',  r' } ) $ is not a normed space, the Hahn-Banach Theorem is not available.
Let $ f \in \left( \dot{B \H}^{ - s,q' ,\psi,L}_{p ',t' ,r ',\omega ' } (X)   \right) ^\ast$.  To show $f \in \dot{B}^{ s, \infty ,\psi,L}_{p  , t  ,r ,\omega} (X) $, we shall prove that 
\begin{equation*}
	\sup_{j\in \mathbb Z} 2^{js } \| \psi_j (\sqrt{L}) f  \|_{M_{p,\omega}^{t,r} } \lesssim \|f\|_{ \left( \dot{B \H}^{ - s,q' ,\psi,L}_{p ',t' ,r ',\omega ' } (X)   \right) ^\ast }.
\end{equation*}
To this end, using Theorem \ref{predual scalar},
it suffices to show 
\begin{equation} \label{q' < 1 Besov aim}
\left| 	\int_X 2^{js}  \psi_j (\sqrt{L} ) f  (x) h (x)  \d \mu  (x)  \right|  \lesssim \|f\|_{ \left( \dot{B \H}^{ - s,q' ,\psi,L}_{p ',t' ,r ',\omega ' } (X)   \right) ^\ast } 
\end{equation}
for all $ j \in \mathbb Z$ and $h \in  \H _{p', \omega '  }^{t',  r' }$ with $\| h\|_{ \H _{p', \omega '  }^{t',  r' } }  \le 1 $.
Since $f \in\left( \dot{B \H}^{ - s,q' ,\psi,L}_{p ',t' ,r ',\omega ' } (X)   \right) ^\ast $,
\begin{align*}
	 \left| 	\int_X 2^{js}  \psi_j (\sqrt{L} ) f  (x) h (x)  \d \mu  (x)  \right|  & =  \left|  \int_X  f (y)    2^{js}  \psi_j (\sqrt{L} )   h (y)    \d \mu (y)  \right| \\
	 & \le \| f\|_{ \left( \dot{B \H}^{ - s,q' ,\psi,L}_{p ',t' ,r ',\omega ' } (X)   \right) ^\ast }  \left\|  2^{js}  \psi_j (\sqrt{L} )   h   \right\|_{\dot{B \H}^{ - s,q' ,\psi,L}_{p ',t' ,r ',\omega ' } (X)  } .
\end{align*}
Now we estimate $ \left\|  2^{js}  \psi_j (\sqrt{L} )   h   \right\|_{\dot{B \H}^{ - s,q' ,\psi,L}_{p ',t' ,r ',\omega ' } (X)  } $.
\begin{align*}
	\| 2^{js } \psi_j (\sqrt{L})h\|_{\dot{B \H}^{ - s,q' ,\psi,L}_{p ',t' ,r ',\omega ' } (X)  } & = \left(  \sum_{\ell \in \mathbb Z}   2^{-\ell s q' } \| 2^{js } \psi_\ell (\sqrt{L}) \psi_j (\sqrt{L}) (h) \|_{  \H _{p', \omega '  }^{t',  r' }   }  ^{q'}   \right) ^{1/q'}  \\
	& = \left(  \sum_{\ell= j-1}^{j+1}   2^{-\ell s q' } \| 2^{js } \psi_\ell (\sqrt{L}) \psi_j (\sqrt{L}) (h) \|_{  \H _{p', \omega '  }^{t',  r' }   }  ^{q'}   \right) ^{1/q'} .
\end{align*}
Using kernel estimate (\ref{kernel estimate psi ell psi j}), Theorem \ref{predual scalar} and Lemma \ref{M bour weight},
\begin{align*}
\|  \psi_\ell (\sqrt{L}) \psi_j (\sqrt{L}) (h) \|_{  \H _{p', \omega '  }^{t',  r' }   }  &=  \sup_{  \| E\|_{ M_{p,\omega}^{t,r} } \le 1} \int_X \psi_\ell (\sqrt{L}) \psi_j (\sqrt{L}) (h) (x) E(x) \d \mu (x) \\
& \lesssim \sup_{  \| E\|_{ M_{p,\omega}^{t,r} } \le 1} \int_X \int_X D_{2^{ -j }, \sigma } (x, y ) h (y) \d \mu  (y) E (x) \d \mu (x) \\
& \lesssim \sup_{  \| E\|_{ M_{p,\omega}^{t,r} } \le 1} \int_X  h (y) \M E (y) \d \mu (y) \\
& \le \sup_{  \| E\|_{ M_{p,\omega}^{t,r} } \le 1}  \| h\|_{ \H _{p', \omega '  }^{t',  r' }   }  \| \M E\|_{M_{p,\omega}^{t,r}  } \\
&\lesssim  \| h\|_{ \H _{p', \omega '  }^{t',  r' }   } \le 1.
\end{align*}
Thus, we obtain $\| 2^{js } \psi_j (\sqrt{L})h\|_{\dot{B \H}^{ - s,q' ,\psi,L}_{p ',t' ,r ',\omega ' } (X)  } \lesssim  1$ and (\ref{q' < 1 Besov aim}). Hence the proof is complete.
\end{proof}

\section{Predual of weighted homogeneous Bourgain-Morrey Triebel-Lizorkin space} \label{Preudal BM TL}
\begin{theorem}\label{predual TL}
	Let   $s\in \mathbb R$ and $\omega \in A_p$ for $p \in (1,\infty)$.	
	Let $\beta_\omega : =  1 - (1- \alpha_0 ) ^{ p }  / [\omega ]_{A_  {p} }$ where $\alpha_0$  is the reverse doubling  constant of $X$. 
	Let $ 1< p< t < \min\{ r, \log_2 ( \beta_\omega ^{-1} )  r  /n \}  <\infty$ or let $ 1 <p \le t <r =\infty $.
	Let $1 < q <\infty$.
	Then
	\begin{align*}
		\left( \dot{F \H}^{ - s,q' ,\psi,L}_{p ',t' ,r ',\omega ' } (X) \right) ^\ast =	\dot{\mathcal F}^{s,q,\psi,L}_{p,t,r,\omega} (X) .
	\end{align*}	
\end{theorem}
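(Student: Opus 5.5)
The plan mirrors the proof of Theorem \ref{predual Besov}, with the vector-valued duality of Theorem \ref{predual vector} taking the place of Proposition \ref{pro ell H = ell BM}.

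\textbf{The inclusion of $\dot{\mathcal F}^{s,q,\psi,L}_{p,t,r,\omega} (X)$ into the dual.}

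Take a real-valued $\psi$ with $\sum_j\psi_j^2=1$. By Lemma \ref{conv in S infty}, for $f\in\dot{\mathcal F}^{s,q,\psi,L}_{p,t,r,\omega}(X)$ and $\phi\in\mathcal S_\infty$ we can write
\[
\langle f,\phi\rangle=\int_X\sum_{j\in\mathbb Z} 2^{js}\psi_j(\sqrt L)f(x)\,2^{-js}\psi_j(\sqrt L)\phi(x)\,\d\mu(x).
\]
Part (i) of Theorem \ref{predual vector}, applied with $\vec g=\{2^{js}\psi_j(\sqrt L)f\}_j\in M^{t,r}_{p,\omega}(\ell^q)$ and $\vec f=\{2^{-js}\psi_j(\sqrt L)\phi\}_j\in\H^{t',r'}_{p',\omega'}(\ell^{q'})$, then gives
\[
|\langle f,\phi\rangle|\le\|f\|_{\dot F}\,\|\phi\|_{\dot{F\H}}.
\]
The space $\mathcal S_\infty$ is contained in the block space by Lemma \ref{S_infty embed}. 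Either density of $\mathcal S_\infty$ in $\dot{F\H}$, or the same pairing written directly for a general element of $\dot{F\H}$, extends $f$ to a bounded functional of norm $\lesssim\|f\|_{\dot F}$.

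\textbf{The reverse inclusion.}

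The map $\rho:g\mapsto\{2^{-js}\psi_j(\sqrt L)g\}_j$ embeds $\dot{F\H}^{-s,q',\psi,L}_{p',t',r',\omega'}(X)$ isometrically onto a subspace $Y$ of $\H^{t',r'}_{p',\omega'}(\ell^{q'})$. The latter is a Banach space by Theorem \ref{block space is Banach}, since $1<q'<\infty$.

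Given $f$ in the dual, extend $f\circ\rho^{-1}$ from $Y$ by Hahn--Banach to a functional on $\H^{t',r'}_{p',\omega'}(\ell^{q'})$ with the same norm. By Theorem \ref{predual vector}(ii) this extension is represented by some $\{f_j\}\in M^{t,r}_{p,\omega}(\ell^q)$ with $\|\{f_j\}\|_{M(\ell^q)}=\|f\|_\ast$.

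Testing against $\phi\in\mathcal S_\infty$ and using Fubini, exactly as in the Besov case, gives
\[
f=\sum_{j\in\mathbb Z}2^{-js}\psi_j(\sqrt L)f_j\quad\text{in }\mathcal S_\infty'.
\]
The interchange of integrals is justified by Lemma \ref{kernel est}, Lemma \ref{basic est}, Example \ref{exm ploy}, and the boundedness of $\M$ on $M^{t,r}_{p,\omega}$ from Lemma \ref{M bour weight}(i).

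\textbf{The key estimate.}

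Next, $\psi_\ell(\sqrt L)f=\sum_{|j-\ell|\le1}2^{-js}\psi_\ell(\sqrt L)\psi_j(\sqrt L)f_j$. The kernel bound (\ref{kernel estimate psi ell psi j}) yields the pointwise estimate
\[
2^{\ell s}|\psi_\ell(\sqrt L)f|\lesssim\sum_{|j-\ell|\le1}\M f_j .
\]
Hence
\[
\Big\|\Big(\sum_\ell 2^{\ell sq}|\psi_\ell(\sqrt L)f|^q\Big)^{1/q}\Big\|_{M^{t,r}_{p,\omega}}\lesssim\|\{\M f_j\}\|_{M^{t,r}_{p,\omega}(\ell^q)}\lesssim\|\{f_j\}\|_{M^{t,r}_{p,\omega}(\ell^q)}\le\|f\|_\ast .
\]
This step needs the Fefferman--Stein vector-valued inequality, Lemma \ref{M bour weight}(ii), with $u=q\in(1,\infty)$.

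\textbf{Expected obstacle.}

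I expect the main difficulty to be verifying the hypotheses of Lemma \ref{M bour weight}. That lemma requires $r>-nt/\log_2\beta$, which matches the standing assumption $t<\log_2(\beta_\omega^{-1})r/n$, so this should be fine.

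The subtler point is the first inclusion: the functional must be defined on all of $\dot{F\H}$, not only on $\mathcal S_\infty$. To handle this, I would define the action directly by
\[
\langle f,g\rangle:=\sum_j\int_X\psi_j(\sqrt L)f\,\psi_j(\sqrt L)g\,\d\mu .
\]
Theorem \ref{predual vector}(i) shows this sum converges absolutely, and it agrees with the pairing on $\mathcal S_\infty$. Its independence of the chosen $\psi$ follows from the identification once the second inclusion is proved.
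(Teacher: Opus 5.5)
Your proposal is correct and follows essentially the same route as the paper. For the first inclusion you use the $\psi_j^2$ resolution together with the block/Bourgain--Morrey duality; for the converse you use Hahn--Banach into $\H^{t',r'}_{p',\omega'}(\ell^{q'})$, the representation from Theorem \ref{predual vector}, the identity $f=\sum_j 2^{-js}\psi_j(\sqrt L)f_j$ in $\mathcal S_\infty'$, the pointwise bound by $\M f_j$, and Lemma \ref{M bour weight}(ii). Your extra step of defining the pairing directly on all of $\dot{F\H}^{-s,q',\psi,L}_{p',t',r',\omega'}(X)$ is a sensible refinement of a point the paper passes over, since the paper only tests against $\phi\in\mathcal S_\infty$ without a density argument.
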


\begin{proof}
	First we prove the inclusion $\dot{\mathcal F}^{s,q,\psi,L}_{p,t,r,\omega} (X)   \subset 	\left( \dot{F \H}^{ - s,q' ,\psi,L}_{p ',t' ,r ',\omega ' } (X) \right) ^\ast $. Let $ f \in \dot{\mathcal F}^{s,q,\psi,L}_{p,t,r,\omega} (X) $. 
	By (\ref{eq f = sum psi 2}), 	for every $\phi \in \mathcal S_\infty $, we have
	\begin{equation*}
		\langle f, \phi\rangle = \sum_{j= \mathbb Z } 	\langle \psi_j (\sqrt{L})  f, \psi_j (\sqrt{L}) \phi\rangle .
	\end{equation*}
	Then by Theorem \ref{predual scalar},	
	\begin{align*}
		|	\langle f, \phi\rangle | & \le \sum_{j= \mathbb Z } \int_X | \psi_j (\sqrt{L}) f (y) | | \psi_j (\sqrt{L}) \phi (y) | \d \mu (y) \\
		& \le \int_X   \left( \sum_{j= \mathbb Z } 2^{jsq} | \psi_j (\sqrt{L}) f (y) |^q\right)^{1/q}  \left(  \sum_{j= \mathbb Z } 2^{-jsq'}| \psi_j (\sqrt{L}) \phi (y) | ^{q'} \right) ^{1/q'}  \d \mu (y) \\
		& \le \| f \|_{\dot{\mathcal F}^{s,q,\psi,L}_{p,t,r,\omega} (X)   }  \left\|  \left(  \sum_{j= \mathbb Z } 2^{-jsq'}| \psi_j (\sqrt{L}) \phi (y) | ^{q'} \right) ^{1/q'}  \right\|_{ \H _{p', \omega '  }^{t',  r' } } \\
		& =  \| f \|_{\dot{\mathcal F}^{s,q,\psi,L}_{p,t,r,\omega} (X)   }  \left\| \phi   \right\|_{ \dot{F \H}^{ - s,q' ,\psi,L}_{p ',t' ,r ',\omega ' } (X) }.
	\end{align*}
	Hence we obtain 
	\begin{equation*}
		\| f \|_{ \left( \dot{F \H}^{ - s,q' ,\psi,L}_{p ',t' ,r ',\omega ' } (X) \right) ^\ast }  \le \| f\|_{ \dot{\mathcal F}^{s,q,\psi,L}_{p,t,r,\omega} (X) } .
	\end{equation*}

	Next we proceed with the embedding $\left(\dot{F \H}^{ - s,q' ,\psi,L}_{p ',t' ,r ',\omega ' } (X)  \right) ^\ast  \subset \dot{\mathcal F}^{s,q,\psi,L}_{p,t,r,\omega} (X)  $. 	
	Let $Y$ be the subspace of the normed space $  \H _{p', \omega '  }^{t',  r' }  ( \ell^{q'}) $ defined by
	\begin{equation*}
		Y:= \left\{     \{  2^{ - js} \psi_j (\sqrt{L}) g  \} _{j \in \mathbb Z } : g \in  \dot{F \H}^{ - s,q' ,\psi,L}_{p ',t' ,r ',\omega ' } (X)   \right\} .
	\end{equation*}
By Theorem \ref{block space is Banach}, $  \H _{p', \omega '  }^{t',  r' }  ( \ell^{q'}) $ is a Banach space.
	Note that the mapping $\rho:\dot{F \H}^{ - s,q' ,\psi,L}_{p ',t' ,r ',\omega ' } (X)   \to Y,$  $g  \mapsto \{  2^{ - js} \psi_j (\sqrt{L}) g  \} _{j \in \mathbb Z } $ is an isometric isomorphism.	
	If $f \in\left( \dot{F \H}^{ - s,q' ,\psi,L}_{p ',t' ,r ',\omega ' } (X)   \right) ^\ast $, we define $f_0 :=  f \circ  \rho^{-1} :  Y \to \mathbb C $ and note that $f_0 \in Y ^\ast  $ with $ \|f_0 \|_{ Y^\ast}  = \| f\|_{ \left( \dot{F \H}^{ - s,q' ,\psi,L}_{p ',t' ,r ',\omega ' } (X) \right) ^\ast }$. Then according to the Hahn-Banach Theorem there exists an
	extension $\tilde  f  \in   \left(   \H _{p', \omega '  }^{t',  r' } (\ell^{q'} ) \right) ^\ast $  of $f_0$ preserving the operator norm i.e., 
	\begin{equation*}
		\| \tilde f \|_{  \left(   \H _{p', \omega '  }^{t',  r' } (\ell^{q'} ) \right) ^\ast  } = \|f_0 \|_{Y^\ast }  = \| f\|_{  \left( \dot{F \H}^{ - s,q' ,\psi,L}_{p ',t' ,r ',\omega ' } (X)  \right) ^\ast  }.
	\end{equation*}
	Moreover, by Theorem \ref{predual vector}, there exists $  \{  f_j \}_{j\in \mathbb Z} \in  M_{p,\omega}^{t,r} (\ell^q ) $ representing $\tilde f$, that is 
	\begin{equation*}
		J_{\tilde f} (\vec  g) =   \int_X \sum_{j  \in \mathbb Z} f_j (x) g_j (x) \d \mu (x)
	\end{equation*}
	for each $\vec g  = \{  g_j \}_{j\in \mathbb Z} \in  \H _{p', \omega '  }^{t',  r' } (\ell^{q'} )  $ and
	\begin{equation} \label{H ell  ast = M ell}
		 \| \tilde f \| _{ \left(  \H _{p', \omega '  }^{t',  r' } (  \ell^{q'} ) \right) ^\ast  } = \|  \{  f_j \}_{j\in \mathbb Z} \|_{  M_{p,\omega}^{t,r} (\ell^q )  }  .
	\end{equation}	
Similarly to (\ref{f = sum_j in S _infty dual}) we can show
	\begin{equation} \label{f = sum_j in S _infty dual 2}
		f = \sum_{j \in \mathbb Z}  \psi_j (\sqrt L )  (2^{-js} f_j  )
	\end{equation}
	in the sense of $\mathcal S_\infty' $.
It remains to be proven that $f \in \dot{\mathcal F}^{s,q,\psi,L}_{p,t,r,\omega} (X)$  and 
	\begin{equation*}
		\| f \|_{\dot{\mathcal F}^{s,q,\psi,L}_{p,t,r,\omega} (X) } \lesssim \| f\|_{  \left( \dot{F \H}^{ - s,q' ,\psi,L}_{p ',t' ,r ',\omega ' } (X)   \right) ^\ast}.
	\end{equation*}
	For each $\ell \in \mathbb Z$, we have 
	\begin{align*}
		\psi_\ell (\sqrt{ L}) f (x)& =  \sum_{  j = \ell -1 } ^{\ell  +1} 	\psi_\ell (\sqrt{ L})  \psi_j  (\sqrt{ L}) (2^{-js}  f_j  ) (x) .
	\end{align*}
Just as in (\ref{psi ell psi j control by M}), for all $|\ell -j| \le 1$,
	\begin{equation*} 
	| 	\psi_\ell (\sqrt{ L})  \psi_j  (\sqrt{ L}) (  2^{-js} f_j)  (x)  | \lesssim \M (2^{-js} f_j  ) (x).
\end{equation*}
Thus by Lemma \ref{M bour weight} and Equation (\ref{H ell  ast = M ell}),
\begin{align*}
		\| f \|_{\dot{\mathcal F}^{s,q,\psi,L}_{p,t,r,\omega} (X) } & = \left\|  \left(   \sum_{\ell \in \mathbb Z} 2^{s\ell q} |\psi_\ell (\sqrt{L}) (f) |^q \right)^{1/q}     \right\|_{ M_{p,\omega}^{t,r} } \\
		&\lesssim \left\|  \left(   \sum_{\ell \in \mathbb Z} 2^{s\ell q} \left( \sum_{  j = \ell -1 } ^{\ell  +1} 	| \psi_\ell (\sqrt{ L})  \psi_j  (\sqrt{ L}) (2^{-js}  f_j  ) |  \right) ^q \right)^{1/q}     \right\|_{ M_{p,\omega}^{t,r} } \\
		& \lesssim  \left\|  \left(   \sum_{\ell \in \mathbb Z} 2^{s\ell q} \left( \sum_{  j = \ell -1 } ^{\ell  +1} 	\M (2^{-js} f_j  )   \right) ^q \right)^{1/q}     \right\|_{ M_{p,\omega}^{t,r} } \\
			& \lesssim  \left\|  \left(   \sum_{j \in \mathbb Z}  |f_j |^q   \right)^{1/q}     \right\|_{ M_{p,\omega}^{t,r} } = \| \tilde f \| _{ \left(  \H _{p', \omega '  }^{t',  r' } (  \ell^{q'} ) \right) ^\ast  }. 
\end{align*}
Thus we finish the proof.
\end{proof}

Finally we show that the preduals of weighted homogeneous Bourgain-Morrey Besov-Triebel-Lizorkin spaces are independent of the choice of   partitions of unity.
\begin{lemma}[Theorem 3.6, \cite{BX25}] \label{independet BTL}
	Let  $\psi$ and $\varphi$ be partitions of unity.
	Let $0< q\le \infty $, $s\in  \mathbb R$. Let $ 0<p <\infty $. Let $ \omega \in A_\infty  $ such that  $\omega \in A_{p/A} $ for some $A>0$. 	Let $\beta = 1 - ( 1-\alpha_0 )^{p/A}/ [\omega]_{A_ {p/A}} $.
	Let $0<p <t <\infty $  and $ \max\{ t, -nt/ \log_2 \beta \} <r <\infty   $, or let $ 0<p \le t <r =\infty $.
Then, the spaces $\dot{\mathcal A}^{s,q,\psi,L}_{p,t,r,\omega} (X) $   and  $\dot{\mathcal A}^{s,q,\varphi,L}_{p,t,r,\omega} (X) $, $A \in \{ B,F\}$  coincide with equivalent norms.
\end{lemma}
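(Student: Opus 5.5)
The statement is the independence of the Bourgain-Morrey Besov-Triebel-Lizorkin spaces from the choice of partition of unity, Lemma~\ref{independet BTL}. It is quoted from \cite[Theorem 3.6]{BX25}, so in this paper it is taken as given. If I had to prove it, the plan is the standard Calderón reproducing argument.

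By symmetry it suffices to show $\|f\|_{\dot{\mathcal A}^{s,q,\psi,L}_{p,t,r,\omega}}\lesssim \|f\|_{\dot{\mathcal A}^{s,q,\varphi,L}_{p,t,r,\omega}}$. Since $\sum_j\varphi_j=1$ on $(0,\infty)$ and the supports of $\psi_\ell$ and $\varphi_j$ overlap only when $|j-\ell|\le 1$, Lemma~\ref{conv in S infty} gives, for $f\in\mathcal S_\infty'$,
\begin{equation*}
\psi_\ell(\sqrt L)f=\sum_{|j-\ell|\le1}\psi_\ell(\sqrt L)\varphi_j(\sqrt L)f .
\end{equation*}

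\textbf{Pointwise control.} Since $p$ may be $\le 1$, a plain maximal function bound is not enough. Instead I would use a Peetre-type maximal function
\begin{equation*}
\varphi_j^*f(x)=\sup_y \frac{|\varphi_j(\sqrt L)f(y)|}{(1+2^j\rho(x,y))^{a}} .
\end{equation*}
The kernel of $\psi_\ell(\sqrt L)$ decays faster than any power by Lemma~\ref{kernel est}, so $|\psi_\ell(\sqrt L)\varphi_j(\sqrt L)f(x)|\lesssim\varphi_j^*f(x)$. Next I would prove the key pointwise bound
\begin{equation*}
\varphi_j^*f(x)\lesssim \big[\mathcal M(|\varphi_j(\sqrt L)f|^{\theta})(x)\big]^{1/\theta},\qquad a>n/\theta .
\end{equation*}
This comes from the sub-mean-value / almost-orthogonality argument: reproduce $\varphi_j(\sqrt L)f$ through an auxiliary function whose spectral multiplier equals $1$ on the support of $\varphi_j$ (via the Gaussian heat-kernel calculus), then iterate.

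\textbf{Norm estimate.} Choose $\theta<\min\{p,q\}$ with $\omega\in A_{p/\theta}$; the hypothesis $\omega\in A_{p/A}$ lets one take $\theta=A$. The $r$-condition $r>-nt/\log_2\beta$ is then exactly what Lemma~\ref{M bour weight} needs to apply on $M^{t/\theta,r/\theta}_{p/\theta,\omega}$. Applying Lemma~\ref{M bour weight}(ii) there with $u=q/\theta$ (part (i) in the Besov case) after raising to the power $\theta$, and using $\|\,|g|^\theta\|_{M^{t/\theta,r/\theta}_{p/\theta,\omega}}=\|g\|^\theta_{M^{t,r}_{p,\omega}}$, gives the desired estimate. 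The shift $|j-\ell|\le1$ costs only a factor $2^{|s|}$.

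\textbf{Main obstacle.} The hard step is the Peetre inequality in this setting, where there is no Fourier transform. It needs a careful kernel estimate for products of functional calculi with compactly supported multipliers, together with an a priori finiteness of $\varphi_j^*f$ for $f\in\mathcal S_\infty'$, which follows from the polynomial growth of $\varphi_j(\sqrt L)f$. Both are available in \cite{BBD20,BX25}.
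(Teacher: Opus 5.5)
Your proposal is correct and is essentially the argument behind the cited result; the paper gives no proof of its own beyond quoting \cite[Theorem 3.6]{BX25}. The definition of $\beta$ through $A_{p/A}$ is tailored to exactly your route: a Peetre-type bound $\varphi_j^*f\lesssim[\mathcal M(|\varphi_j(\sqrt L)f|^{A})]^{1/A}$, followed by Lemma~\ref{M bour weight} on $M^{t/A,r/A}_{p/A,\omega}$ with $\theta=A$. One point should be made explicit. Taking $\theta=A$ requires $A<p$, and also $A<q$ for the $\dot F$-scale, which the quoted statement leaves implicit. For other $A$ you would have to choose $\theta<A$ and re-run the proof of Lemma~\ref{M bour weight} using the decay constant $\beta$ coming from $A_{p/A}$, since the constant attached to $A_{p/\theta}$ is in general different.
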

By Lemma \ref{independet BTL} and Theorems \ref{predual Besov}, \ref{predual TL}, we obtain the following result.
\begin{theorem}
		Let  $\psi$ and $\varphi$ be partitions of unity.		
			Let   $s\in \mathbb R$ and $\omega \in A_p$ for $p \in (1,\infty)$.	
		Let $\beta_\omega : =  1 - (1- \alpha_0 ) ^{ p }  / [\omega ]_{A_  {p} }$ where $\alpha_0$  is the reverse doubling  constant of $X$. 
		Let $ 1< p< t < \min\{ r, \log_2 ( \beta_\omega ^{-1} )  r  /n \}  <\infty$ or let $ 1 <p \le t <r =\infty $.
		
		{\rm (i)} 	
		Let $q' \in (0,\infty)$.	
		Then	
		 $\dot{B \H}^{ - s,q' ,\psi,L}_{p ',t' ,r ',\omega ' } (X) $ and  $	\dot{B \H}^{ - s,q' ,\varphi,L}_{p ',t' ,r ',\omega ' } (X) $   coincide with equivalent norms.

	{ \rm  (ii)}
	Let $q' \in (1,\infty)$.
Then 
 $\dot{F \H}^{ - s,q' ,\psi,L}_{p ',t' ,r ',\omega ' } (X) $ and  $	\dot{F \H}^{ - s,q' ,\varphi,L}_{p ',t' ,r ',\omega ' } (X) $   coincide with equivalent norms.
\end{theorem}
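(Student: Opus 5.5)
The plan is to derive the independence of the block spaces from the independence of their duals, by a duality (norming) argument. Fix two partitions of unity $\psi$ and $\varphi$. By Lemma \ref{independet BTL}, with $A=1$ so that $\beta=\beta_\omega$, the spaces $\dot{\mathcal A}^{s,q,\psi,L}_{p,t,r,\omega}(X)$ and $\dot{\mathcal A}^{s,q,\varphi,L}_{p,t,r,\omega}(X)$ coincide with equivalent norms. The hypotheses $t<\min\{r,\log_2(\beta_\omega^{-1})r/n\}$ are exactly what that lemma needs. By Theorem \ref{predual Besov} in case (i), and Theorem \ref{predual TL} in case (ii), these spaces are the duals of the corresponding block spaces. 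So we would show that, for $g\in\mathcal S_\infty'$,
\begin{equation*}
\|g\|_{\dot{B\H}^{-s,q',\psi,L}_{p',t',r',\omega'}}\approx \sup\Big\{|\langle f,g\rangle| : \|f\|_{\dot{\mathcal B}^{s,q,\psi,L}_{p,t,r,\omega}}\le 1\Big\},
\end{equation*}
and similarly with $\psi$ replaced by $\varphi$. The right-hand sides are comparable by Lemma \ref{independet BTL}, which gives the claim.

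The upper bound $|\langle f,g\rangle|\le \|f\|\,\|g\|$ is the first half of the proofs of Theorems \ref{predual Besov} and \ref{predual TL}. There we expand $\langle f,g\rangle=\sum_j\langle\psi_j(\sqrt L)f,\psi_j(\sqrt L)g\rangle$ (with $\psi^2$-type resolutions), then apply Theorem \ref{predual scalar} and H\"older's inequality. This step needs to be run for general $g$ in the block space, not only for $g\in\mathcal S_\infty$. The same computation applies since the sum converges absolutely by those estimates.

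The reverse inequality is the norming step, and it is the main obstacle. I would argue as follows. For $q'\ge1$ (and for the $F$-case with $1<q'<\infty$), $\rho:g\mapsto\{2^{-js}\psi_j(\sqrt L)g\}_j$ embeds the block space isometrically into $\ell^{q'}(\H^{t',r'}_{p',\omega'})$, respectively $\H^{t',r'}_{p',\omega'}(\ell^{q'})$. These are Banach spaces by Theorem \ref{block space is Banach}. By Hahn--Banach, the norm of $\rho(g)$ is attained by a functional of norm one on the big space. By Proposition \ref{pro ell H = ell BM}, respectively Theorem \ref{predual vector}, that functional is given by a sequence $\{f_j\}$ in $\ell^q(M^{t,r}_{p,\omega})$, respectively $M^{t,r}_{p,\omega}(\ell^q)$, of norm one. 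We then set $f=\sum_j\psi_j(\sqrt L)(2^{-js}f_j)$. Exactly as in the proof of \eqref{f = sum_j in S _infty dual}, this $f$ lies in $\dot{\mathcal A}^{s,q,\psi,L}_{p,t,r,\omega}$ with norm $\lesssim1$, using the kernel bound \eqref{kernel estimate psi ell psi j} and Lemma \ref{M bour weight}. It also satisfies $\langle f,g\rangle=\sum_j\int f_j\,2^{-js}\psi_j(\sqrt L)g\,\d\mu=\|g\|$.

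The delicate point is justifying this pairing identity for $g$ in the block space rather than in $\mathcal S_\infty$. The Fubini interchange must be controlled by the $\M$-bound and the $M$--$\H$ duality, as done for $\phi$ in the proof of Theorem \ref{predual Besov}. For $0<q'<1$ in case (i), Hahn--Banach is unavailable. Instead I would reduce to the $q'=1$ situation blockwise: for each $j$ pick $h_j$ norming $\psi_j(\sqrt L)g$ in $\H^{t',r'}_{p',\omega'}$ via Theorem \ref{predual scalar}, and combine $\|g\|\le\|g\|_{q'=1\text{-type}}$ quantities with the $\ell^{q'}$ quasi-triangle inequality. If this fails, I would fall back on comparing $\psi_\ell(\sqrt L)g$ with $\sum_{|j-\ell|\le 2}\psi_\ell(\sqrt L)\varphi_j(\sqrt L)g$ directly. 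Writing $\psi_\ell=\psi_\ell\sum_{|j-\ell|\le2}\varphi_j$ and using the $\M$-boundedness on $\H^{t',r'}_{p',\omega'}$ obtained by duality (as in the last display of the proof of Theorem \ref{predual Besov}), we would get $\|\psi_\ell(\sqrt L)g\|_{\H}\lesssim\sum_{|j-\ell|\le2}\|\varphi_j(\sqrt L)g\|_{\H}$. That direct estimate settles case (i) for every $q'$, and case (ii) via the vector-valued maximal inequality dualized through Theorem \ref{predual vector}.
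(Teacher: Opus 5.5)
\textbf{Your main route does not work as stated.} Your main route recovers the block norm by norming against the unit ball of the dual and then invokes Lemma \ref{independet BTL}. It has two defects.

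\emph{For $0<q'<1$ it fails structurally.} By Theorem \ref{predual Besov}, every space $\dot{B \H}^{-s,q',\psi,L}_{p',t',r',\omega'}(X)$ with $q'\in(0,1]$ has the same dual $\dot{\mathcal B}^{s,\infty,\psi,L}_{p,t,r,\omega}(X)$. The first half of that proof gives only $\sup_{\|f\|\le 1}|\langle f,g\rangle|\lesssim \sum_j 2^{-js}\|\psi_j(\sqrt L)g\|_{\H^{t',r'}_{p',\omega'}}$. This is the $q'=1$ quantity, and it does not control the $\ell^{q'}$ quasi-norm. Your blockwise choice of norming functions $h_j$ reproduces exactly this $\ell^1$ sum, and no quasi-triangle inequality can upgrade it.

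\emph{For $q'\ge 1$ the Hahn--Banach step is circular.} To norm $\{2^{-js}\varphi_j(\sqrt L)g\}_j$ you need it to lie in $\ell^{q'}(\H^{t',r'}_{p',\omega'})$ (resp.\ $\H^{t',r'}_{p',\omega'}(\ell^{q'})$). That means $g\in\dot{B \H}^{-s,q',\varphi,L}_{p',t',r',\omega'}(X)$, which is exactly what you are trying to prove for $g$ given in the $\psi$-space. You would also need the pairing $\langle f,g\rangle$ to be the same whether $\psi$ or $\varphi$ is used; for $g\notin\mathcal S_\infty$ it must be defined through some resolution. Without that, the two suprema are not comparable via Lemma \ref{independet BTL}. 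Truncating in $j$ removes the circularity only if you already know $\varphi_j(\sqrt L)g\in\H^{t',r'}_{p',\omega'}$ with control, and that is precisely your fallback estimate.

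\textbf{Your fallback is correct and genuinely different from the paper; lead with it.} The paper proves the theorem in one line from Lemma \ref{independet BTL} and Theorems \ref{predual Besov}, \ref{predual TL} (``the duals agree, so the preduals agree''). That tacitly uses the norming step above, and for $q'\in(0,1)$ in (i) it cannot be justified by duality alone, since the dual does not see $q'$.

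Your direct comparison proceeds as follows.
\begin{itemize}
\item Write $\psi_\ell(\sqrt L)g=\sum_{|j-\ell|\le 2}\psi_\ell(\sqrt L)\varphi_j(\sqrt L)g$.
\item Dominate $\psi_\ell(\sqrt L)$ by an integral operator with kernel $D_{2^{-j},\sigma}$ (Lemma \ref{kernel est}); its transpose is dominated by $\M$.
\item Transfer the boundedness of $\M$ on $M^{t,r}_{p,\omega}$, resp.\ $M^{t,r}_{p,\omega}(\ell^q)$ (Lemma \ref{M bour weight}), to $\H^{t',r'}_{p',\omega'}$, resp.\ $\H^{t',r'}_{p',\omega'}(\ell^{q'})$, through Theorem \ref{predual scalar}, resp.\ \ref{predual vector}.
\end{itemize}
Finite overlap then gives both inclusions for every $q'\in(0,\infty)$ in (i) and every $1<q'<\infty$ in (ii). This route needs neither Lemma \ref{independet BTL} nor the Besov/Triebel--Lizorkin duality theorems.

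\textbf{One point needs care in the fallback.} The identity $\|u\|_{\H^{t',r'}_{p',\omega'}}=\sup_{\|E\|_{M^{t,r}_{p,\omega}}\le 1}|\int_X uE\,\d\mu|$ is available only once $u$ is known to lie in the block space. The paper also uses this tacitly in the proof of Theorem \ref{predual Besov}. You can secure it as follows:
\begin{itemize}
\item Apply the estimate to finite partial sums of a block decomposition of $\varphi_j(\sqrt L)g$. The image of a single block decays rapidly, so it lies in the block space by Example \ref{exm ploy} and the lattice property.
\item The dual estimate makes these partial sums Cauchy in the block space.
\item Pass to the limit using Theorem \ref{block space is Banach} and the lattice property.
\end{itemize}
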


\noindent\textbf{Author Contributions}\quad 
All authors developed and discussed the results and contributed to the final
manuscript.

\medskip

\noindent\textbf{Data Availability}\quad Data sharing is not applicable 
to this article as no data sets were generated or analyzed.
\medskip

\noindent\textbf{Acknowledgments}\quad The authors used the free version of ChatGPT to assist with checking the proofs and English grammar of this manuscript.

\section*{Declarations}

\noindent\textbf{Conflict of interest}\quad All authors state no conflict of interest.

\medskip

\noindent\textbf{Informed Consent}\quad Informed consent has been obtained 
from all individuals included in this research work.


\bigskip

\noindent   Tengfei Bai, Pengfei Guo

\medskip

\noindent College of Mathematics and Statistics, Hainan Normal University, Haikou, Hainan 571158,
China

\medskip

\noindent Jingshi Xu (Corresponding author)

\medskip

\noindent School of Mathematics and Computing Science, Guilin University of Electronic Technology, Guilin 541004, China

\noindent Center for Applied Mathematics of Guangxi (GUET), Guilin 541004, China

\noindent Guangxi Colleges and Universities Key Laboratory of Data Analysis and Computation, Guilin 541004, China

\smallskip

\noindent {\it E-mails}:
\texttt{202311070100007@hainnu.edu.cn, baitengfei1@126.com} (T. Bai)

\noindent\phantom{{\it E-mails:}}
\texttt{guopf999@163.com} (P. Guo)

\noindent\phantom{{\it E-mails:}}
\texttt{jingshixu@126.com} (J. Xu)

\end{document}